\newtheorem{theorem}{Theorem}
\newtheorem{corollary}[theorem]{Corollary}
\newtheorem{definition}[theorem]{Definition}
\newtheorem{example}[theorem]{Example}
\newtheorem{lemma}[theorem]{Lemma}
\newtheorem{notation}[theorem]{Notation}
\newtheorem{proposition}[theorem]{Proposition}
\newtheorem{remark}[theorem]{Remark}
\title{On information gain, Kullback-Leibler divergence, entropy production and the involution kernel}
\author{A. O. Lopes  and J. K. Mengue}
\begin{document}

\maketitle

\begin{abstract}
It is well known that in Information Theory and  Machine Learning  the Kullback-Leibler divergence, which extends the concept of Shannon entropy, plays a fundamental role. Given an {\it a priori} probability kernel $\hat{\nu}$ and a probability $\pi$ on the measurable space $X\times Y$ we  consider an appropriate definition of entropy of $\pi$ relative to $\hat{\nu}$, which is based on previous works. Using this concept of entropy we obtain a natural definition of information gain for general measurable spaces which coincides with the mutual information given from the K-L divergence in the case $\hat{\nu}$ is identified with a probability $\nu$ on $X$. This will be used to extend the meaning of specific information gain and dynamical entropy production to the model of thermodynamic formalism for symbolic dynamics over a compact alphabet (TFCA model). In this case, we show that the involution kernel is a natural tool for better understanding some important properties of entropy production.

\textbf{ Key words}: information gain, Kullback-Leibler divergence, entropy production, Thermodynamic Formalism,  symbolic  spaces. 

\textbf{AMS Subject Classification}: 37D35; 62B10; 60G10.

\end{abstract}

\section{Introduction}

The main goal of this paper is to introduce and study the concepts of information gain and entropy production to equilibrium measures in symbolic dynamics over a compact space (rather than finite) alphabet. In this way, the first part of this work lies in the frontier between Information Theory and Ergodic Theory. In the second part of the paper, we consider such model of thermodynamic formalism for symbolic dynamics over a compact alphabet (which we will abbreviate by TFCA model) in Ergodic Theory (see \cite{LMMS}).

We start by introducing some elements of Information Theory.  In Data Compression  the Shannon entropy\footnote{we will consider here $\log(x) = \ln(x)$,  but any basis could be also used. Furthermore, $0\log(0) = 0$, by convention. }, $S(P) = - \sum_{i=1}^{d}p_i\log(p_i),$ of a probability vector $P=(p_1,...,p_d)$ plays an important role (see \cite{S} and \cite{CT} chap. 5).  For the benefit of the reader we exhibit introductory examples concerning $S(P)$ in the appendix section \ref{appendix1}. 

Related to this, in the study of Decision Trees in Machine Learning it is also considered another important concept, called \textbf{information gain}. Following \cite{Q} (see p. 89-90), for a probability $\pi$ on $X\times Y=\{1,...,d\}\times\{1,...,r\}$ with $x-$marginal $P=(p_1,...,p_d)$, we define the information gain of $\pi$ with respect to $P$ as
\begin{equation}\label{IG}
	IG(\pi,P) =  \underbrace{-\sum_{x=1}^{d}p_x\log(p_x)}_{S(P)} - \underbrace{\sum_{y=1}^{r}q_y\left[-\sum_{x=1}^{d} \frac{\pi_{x,y}}{q_y}\log\left(\frac{\pi_{x,y}}{q_y}\right)\right]}_{H(\pi)},
\end{equation}
where $q_y = \sum_x \pi_{x,y}$, that is, $Q=(q_1,...,q_r)$ is the $y$-marginal of $\pi$.
In this expression the number $$-\left[\sum_{x=1}^{d} \frac{\pi_{x,y}}{q_y}\log(\frac{\pi_{x,y}}{q_y})\right]$$ is the Shannon entropy of the probability obtained from the distribution of $\pi$ on the line $X\times\{y\}$ and, therefore,  $H(\pi)$ is just the weighted mean of these entropies according to $Q$.  Example \ref{exampleIG} (in our appendix
section    \ref {appendix1}) will exhibit   a concrete interpretation of $IG(\pi,P)$.

Denoting by  $P$ and $Q$ the marginals of $\pi$, we get that the number $IG(\pi,P)$ can be rewritten as
\[ \sum_{x=1}^{d}\sum_{y=1}^{r} {\pi_{x,y}}\log\left(\frac{\pi_{x,y}}{p_xq_y}\right), \]
which, in Information Theory, is called of \textbf{mutual information} 
  (see \cite{CT}).

In Ergodic Theory, for the case of the symbolic space
\begin{equation} \label{jai}\Omega=\Omega^+=\{1,2,...,d\}^{\mathbb{N}}=\{|x_1,x_2,x_3,...)\,|\, x_i \in \{1,2,...,d\} \,,\forall i \in \mathbb{N}\}, \end{equation}
it is considered the Kolmogorov-Sinai entropy for stationary probabilities, that means, probabilities $\mu$ on $\Omega$ which are invariant by the shift map $\sigma:\Omega\to \Omega$,  $\sigma(|x_1,x_2,x_3,...)) = |x_2,x_3,x_4,...)$.
The set $\Omega$ is a compact metric space, when equipped with the metric
$d(|x_1,x_2,x_3,...),|y_1,y_2,y_3,...)) = 2^{-n},$
where $n=\min\{i\,|\,x_i\neq y_i\}$,  if $x\neq y$. It is a measurable space when equipped with the Borel $\sigma-$algebra $\mathcal{B}$.
For any $n\geq 1$, and any fixed symbols $b_1,...,b_n$ in $\{1,2,...,d\}$, we  define the cylinder set
$|b_1,b_2,...,b_n]= \{|x_1,x_2,x_3,...)\in X \,| \, x_1=b_1,...,x_n=b_n \}$.
A Borel probability $\mu$ on $\Omega$ is called shift-invariant if it satisfies
$\mu(|b_1,b_2,...,b_n]) = \sum_{i=1}^{d}\mu(|i,b_1,...,b_n])$
for any cylinder set. Finally, the Kolmogorov-Sinai entropy of a shift-invariant Borel probability $\mu$ is given by
\begin{equation}\label{kolmogorov}
	h(\mu)=\lim_{n\to\infty}-\frac{1}{n}\sum_{i_1,...,i_n}\mu(|i_1,...,i_n])\log(\mu(|i_1,...,i_n]) ).
\end{equation}


In Thermodynamic Formalism (see \cite{PP}, \cite{Walters}) it is  quite common to consider the concept of pressure for a Lipschitz potential $\phi:\Omega\to\mathbb{R}$, where $\Omega=\{1,2,...,d\}^{\mathbb{N}}$. We say that a shift-invariant probability $\mu_{\phi}$ is the equilibrium probability for the Lipschitz function $\phi:\Omega\to\mathbb{R}$, if
\[ P(\phi):= \sup_{\mu\,\text{shift-invariant}} [\int \phi\, d\mu + h(\mu)] = \int \phi\, d\mu_\phi + h(\mu_\phi).\]
The number $P(\phi)$ is called the pressure of the potential $\phi$.

In ergodic theory for symbolic dynamics appears also the concept of specific information gain which can be used to introduce the entropy production for equilibrium probabilities (see \cite{Jiang} for an introduction to these concepts in a setting compatible with the present work).  
If $\mu_\phi$ is the equilibrium probability for the Lipschitz function $\phi$ and if $\mu$ is shift-invariant, then, the \textbf{specific information gain} of $\mu$ with respect to $\mu_\phi$ is given by
\begin{equation}\label{eq2}
	h(\mu,\mu_\phi) :=  \lim_{n} \frac{1}{n} \sum_{|i_1,...,i_n]} \mu(|i_1,...,i_n])\log\left(\frac{\mu(|i_1,...,i_n])}{\mu_\phi(|i_1,...,i_n])}\right).
\end{equation}
Furthermore, from Proposition 1 in \cite{Jiang} (see also \cite{Cha1}), we get
\begin{equation} \label{eq3}
	h(\mu,\mu_\phi)  =\underbrace{[\int \phi\,d\mu_\phi +h(\mu_\phi)]}_{P(\phi)}-[\int \phi\,d\mu +h(\mu)] .
\end{equation}

In section \ref{secbasic} we exhibit some analogies between equations \eqref{eq3} and \eqref{IG}.  Comparing the equations \eqref{kolmogorov} and \eqref{eq2} it is natural to interpret the specific information gain as a relative entropy. Furthermore, in \cite{Cha1} the value $h(\mu,\mu_\phi) $ is characterized by a variant of the Shannon-McMillan-Breiman theorem.
Indeed, from a  result on section 3.2 of \cite{Cha1} we get the following:  consider an ergodic probability $\mu$ on $\Omega$, and for a given Lipschitz function $\phi:\Omega\to\mathbb{R}$, consider the corresponding equilibrium probability $\mu_\phi$. Then,
for $\mu$ almost every point $x=|x_1,x_2,x_3...)\in \Omega$,
\begin{equation} \label{Chacha1} \lim_{n \to \infty} \frac{ 1 }{ n}  \log \left(\frac{\mu  \,(| x_1,x_2,...,x_n] )} {\mu_\phi \,(\,| x_1,x_2,...,x_n ]\,) }\right) = h(\mu,\mu_\phi) .
\end{equation}

An interpretation of this expression  in the sense of the Statistical Mechanics of non equilibrium is the following:
the observed  system $ \mu_\phi$ is the equilibrium probability for the Lipschitz function $\phi$, then,
given a random point $x\in \Omega $,  its time $n-1$ orbit  $\{x, \sigma(x),..., \sigma^{n-1} (x)\,\} $ describes the dynamical evolution of the system under consideration. For each $n\in \mathbb{N}$, let  $\displaystyle{\nu_n^x=\frac{1}{n}\, (\,\delta_x\,+\, \delta_{\sigma(x)}\, + ... +  \delta_{\sigma^{n-1} (x)}\,)}$
the associated probability to $x$ at time $n$ (the empirical measure). Then, from Birkhoff's ergodic theorem,  for $\mu_\phi$ a.e. $x$, we get that $\nu_n^x \to \mu_\phi$, as $n \to \infty.$
Denote by $\mu$ another ergodic probability (which is not the equilibrium for $\phi$).
Then, for $\mu$ a.e. $x$, as $n \to \infty$, we get (in the sense of \eqref{Chacha1}) 
$$\frac{\mu \,(\, |x_1,x_2,...,x_n ] \,)}{\mu_\phi \,(\,|x_1,x_2,...,x_n ]\,) }\,\sim\, e^{  n \,h(\mu,\mu_\phi)}.$$
Therefore, the value $\, h(\mu,\mu_\phi)$ quantifies the asymptotic exponential rate which describes how the dynamical time evolution of the system discriminates between $\mu_\phi$ and $\mu$, when $n \to \infty$.

\medskip
 We will present now the concept of entropy production for equilibrium probabilities on $\{1,...,d\}^{\mathbb{N}}$ and its relations with the specific information gain.  Remember that -  for the sake of notation see  (\ref{jai}) - we denote $\Omega$ by $\Omega^{+}$. The elements of  $\Omega^{+}$ are denoted by $x=|\,x_1,x_2,...).$
Consider  the space $\Omega^-=\{1,2,...,d\}^{\mathbb{N}}$ where
any point in the space  $\Omega^-$ will be written in the form $y=(...,y_{3},y_{2},y_1|$. In this way any point in $\hat{\Omega}:=\Omega^-\times \Omega^+$ will be written in the form $(...,y_{3},y_{2},y_1|x_1,x_2,x_3,...)=(y\,|\, x)$.

We consider on $\hat{\Omega}$ the shift map $\hat{\sigma}$ given by
\begin{equation} \label{ret}
	\hat{\sigma}((...,y_{3},y_{2},y_1|x_1,x_2,x_3,...)) = (...,y_{3},y_{2},y_1,x_1|x_2,x_3,...).
\end{equation}
The natural restriction of  $\hat{\sigma}$ over $\Omega=\Omega^{+}$ is the shift map $\sigma$. The natural restriction of $\hat{\sigma}^{-1}$ over $\Omega^-$ is denoted by $\sigma^-$. Observe that $(\Omega^-,\sigma^-)$  can be identified with $(\Omega^{+},\sigma)$, via  the conjugation $\theta: \Omega^-\to \Omega^{+}=\Omega$, which is  given by
\begin{equation} \label{ket} \theta((...,z_{3},z_{2},z_1|) = |z_1,z_2,z_3,...). \end{equation}

Any $\sigma$-invariant  probability $\mu$ on $\Omega^+$ can be extended (uniquely) to a $\hat{\sigma}$-invariant probability $\hat{\mu}$ on $\Omega^-\times \Omega$. The restriction of $\hat{\mu}$ to $\Omega^{-}$, denoted by $\mu^{-}$, is  $\sigma^-$-invariant. By identifying $(\Omega^{-},\sigma^-)$ with $(\Omega,\sigma)$, via the conjugation $\theta$ and denoting by $\theta_*\mu^-$ the push forward of $\mu^-$, we get
\begin{equation}\label{eq1}
	\theta_*\mu^-(|a_1,a_2....a_m]) = \mu(|a_m,...,a_{2},a_1]).
\end{equation}

Finally, the \textbf{entropy production} of an equilibrium probability $\mu$ on $\Omega$ is defined by the specific information gain $e_p(\mu) := h(\mu\,,\,\theta_*\mu^-)$, that is,
\[e_p(\mu) = h(\mu\,,\,\theta_*\mu^-)= \lim_{n} \frac{1}{n} \sum_{|a_1,...,a_n]} \mu(|a_1,...,a_n])\log\left(\frac{\mu(|a_1,...,a_n])}{\mu(|a_n,...,a_{2},a_1])}\right).\]

From now on we consider more general spaces (measurable spaces or compact metric spaces) instead of finite sets or finite alphabet.

In Information Theory, for a measurable space $X$, the Shannon entropy is extended by the Kullback-Leibler divergence (see \cite{KL}) given by
\[D_{KL}(P|\nu)= \left\{\begin{array}{l}\int \log(\frac{dP}{d\nu})\,dP\,\,\,\,\text{if}\,\, P\ll \nu \\\\ +\infty, \,\,\,\, \text{otherwise}\end{array}\right.,  \]
where $\nu$ can be interpreted as an a priori probability on $X$, $P$ is another probability on $X$ and $P\ll\nu$ means that $P$ is absolutely continuous with respect to $\nu$.

The K-L divergence is also used to extend for measurable spaces the information gain or mutual information. If $\pi$ is absolutely continuous with respect to $P\times Q$, then, denoting by  $\frac{d\pi}{dPdQ}$ the Radon-Nikodyn derivative, the mutual information can be expressed in terms of
\begin{equation}\label{KLmutual}
	D_{KL}(\pi\,|\,P\times Q) = \int \log \left(\frac{d\pi}{dPdQ}(x,y)\right)\,d\pi(x,y).
\end{equation}

From another point of view, in the TFCA model studied in \cite{LMMS}, which considers a symbolic dynamic over an alphabet given by a compact metric space $M$ (instead of a finite or enumerable set), it was proposed to consider a relative entropy given by\small
\[
h^{\nu}(\mu) := -\sup\{\int c(|x_1,x_2,...))\,d\mu(|x_1,x_2,...))\,|\, \int e^{c(|a,w))}\,d\nu(a) = 1\,\forall w\in M^{\mathbb{N}}\},
\]\normalsize
where $\nu$ is an a priori probability on $M$, $\mu$ is a shift-invariant (stationary) probability on $\Omega:=M^{\mathbb{N}}=\{|x_1,x_2,x_3,...)|x_i \in M \,\,\forall i \in \mathbb{N}\}$ and the functions $c:M^{\mathbb{N}}\to\mathbb{R}$ are necessarily Lipschitz. Variations of this expression appear in $\cite{LMMS2}$,  $\cite{MO}$ and more recently in $\cite{LM}$. 

In \cite{ACR} it was proved that $h^{\nu}$ coincides with the specific entropy in Statistical Mechanics, which is related to the $D_{KL}$.  In the present work we propose to rewrite $h^{\nu}$ in terms of a variational characterization of \eqref{KLmutual}  which assures that $h^\nu(\mu)$ is related with $D_{KL}$ in a more direct way than \cite{ACR}. Precisely, if $P$ is a probability on $X$ and $\pi$ is a probability on $X\times Y$ with $y-$marginal $Q$, then from Theorem \ref{theoKL} we obtain that \footnotesize
\begin{equation}\label{DKLsup} D_{KL}(\pi \,|\, \nu\times Q) = \sup\left\{\int c(x,y)\,d\pi(x,y)\,|\, \int e^{c(x,y)}d\nu(x) =1\,\forall y,\, c\, \in \mathcal{F}(\pi)\right\},\end{equation} \normalsize
where $c\in \mathcal{F}(\pi)$ if $c$ is a measurable function such that $\int c\, d\pi$ is well defined (it is not $+\infty -\infty$). Furthermore, in Theorem \ref{entropylip} we prove that for compact metric spaces $X$ and $Y$ the above supremum can be taken over Lipschitz functions.

We notice that taking $X:=M$, $Y:=M^{\{2,3,4,5,...\}}$, and identifying $\Omega$ with $X\times Y$ by the rule
\begin{equation}  \Omega \ni |x_1,x_2,x_3,x_4,...) \to (x_1,\,|x_2,x_3,x_4,...)) \in X\times Y,  \end{equation}	
then, the entropy $h^{\nu}$ can be rewritten as
\[h^{\nu}(\mu) = -\sup\{\int c(x,y)\,d\mu(x,y)\,|\, \int e^{c(x,y)}\,d\nu(x) = 1\,\forall y\in Y\},\]
where the supremum is taken over Lipschitz functions.
It follows from \eqref{DKLsup} that the entropy proposed in \cite{LMMS} (and \cite{MO}) can be rewritten in terms of $D_{KL}$. We elaborate more about this issue for the case of the TFCA model in section \ref{xy}.

In section \ref{secIFS} we introduce the concept of information gain with respect to a probability kernel.

\begin{definition}\label{deftransverse}
Let $X$ and $Y$ be measurable spaces. We will call of a \textbf{probability kernel} any family $\hat{\nu}=\{\hat{\nu}^y\,|\,y\in Y\}$ of probabilities on $X\times Y$, such that,
	
	1)  $\forall \,y \in Y$, we have $\hat{\nu}^y(X_y)=1$, where $X_y=\{(x,y)\,|\,x\in X\}$,
	
	2) $\forall A \subset X\times Y$ measurable, we have that $y\to \nu^y (A)$ is measurable.
\end{definition}

If $\hat{\nu}$ is a probability kernel and $Q$ is a probability on $Y$, then we can define a probability $\pi=\hat{\nu}\,dQ$ on $X\times Y$ by $\pi(A) = \int \hat{\nu}^y(A)\,dQ(y)$. It means
\begin{equation}\label{transdisint}
\int f(x,y)\,d\pi(x,y):= \int f(x,y) \hat{\nu}^{y}(dx)dQ(y).
\end{equation} 
The  right-hand side of the above expression  can be seen as a Rokhlin's disintegration of $\pi$. 

Following \cite{LM} we consider for the present setting the definition of entropy described below.

\begin{definition} Let $X$ and $Y$ be measurable spaces. We define the entropy of any probability $\pi$ on $X\times Y$ relative to the probability kernel $\hat{\nu}$ as
	\[H^{\hat{\nu}}(\pi) = -\sup\{\int c(x,y)\,d\pi(x,y)\,|\, \int e^{c(x,y)}\hat{\nu}^y(dx) =1\,\forall y,\, c \in \mathcal{F}(\pi)\}.\]
\end{definition}

Finally, we will introduce and study the following meaning of information gain, which is able to extend all the different notions of information gain considered in this paper.

\begin{definition} Let $X$ and $Y$ be measurable spaces. We define the \textbf{information gain} of a probability $\pi$ on $X\times Y$ relative to the probability kernel $\hat{\nu}$, by
	\[IG(\pi,\hat{\nu}) = -H^{\hat{\nu}}(\pi).\]
\end{definition}

If $\pi$ has a $y$-marginal $Q$ then from Theorem \ref{theoKL} of section \ref{secIFS} we have that 
\[IG(\pi,\hat{\nu}) = D_{KL}(\pi\,|\, \hat{\nu}\,dQ).\]

Following \cite{LM}, it is possible to remark that there are natural extensions of the above concepts if we replace $X\times Y$ by a measurable space $M$ with a measurable partition (which induces an equivalence relation) and probability kernels by general transverse functions. On the other hand, the above information gain is related with the generalized conditional relative entropy (see chap. 5 in \cite{Gray}) in the following sense:  If $\pi$ has $y-$marginal $Q$ and $\pi_0$ has a disintegration $\pi_0 =\hat{\nu}\,d\tilde{Q} $ then the  conditional relative entropy of $\pi$ with respect to $\pi_0$ is given by $D_{KL}(\pi| \hat{\nu}\,dQ)$ and therefore its value coincides with $IG(\pi,\hat{\nu})$ above defined. 

In addition to being connected with the previous work \cite{LM}, we remark that there are at least two natural reasons for our preference of the above approach using probability kernels instead of a probability $\pi_0$. The first one is because the conditional relative entropy, as above defined, does not consider $\pi_0$ totally, but only $\hat{\nu}$, while the $y-$marginal $\tilde{Q}$ of $\pi_0$ is replaced by $Q$. So it is not necessary to compute a disintegration (or a regular conditional probability measure) $\hat{\nu}$ for $\pi_0$, but just to consider a priori such probability kernel $\hat{\nu}$ instead of $\pi_0$. In this case, it is not necessary to impose more restrictions on the spaces which would be necessary in order to get a disintegration. The second one is that for a fixed probability $\pi_0$ the regular conditional probability measure $\hat{\nu}$ is in general not unique. If $\hat{\nu}$ and $\hat{\mu}$ are different probability kernels satisfying 
\[\pi_0 = \hat{\nu}\,d\tilde{Q} = \hat{\mu}\,d\tilde{Q}\] then the conditional relative entropy may not be well defined and more assumptions are required, as for example $Q\ll \tilde{Q}$ . In section \ref{seccompact} we consider compact metrical spaces $X$ and $Y$ and show that, under some assumptions on $\pi_0$, an information gain (or, conditional relative entropy) $IG(\pi,\pi_0)$ can be naturally introduced.

The above generalized information gain will be used  in section \ref{xy}  to introduces the concept of  information gain in the TFCA model. Finally, we will be able to introduce the definition of entropy production in the TFCA model (see section \ref{secEP}). In our reasoning, it will be natural to use as a tool the concept of involution kernel (for references about the involution kernel with setting compatible with the present paper see \cite{BLT}, \cite{LMMS} and \cite{LOT}).
We will show (see Corollary \ref{variational21}) that in the case the potential is symmetric the associated equilibrium probability has zero entropy production.

In \cite{LR} the authors analyze the change of the  KL-divergence for Gibbs probabilities under the action of the dual of the Ruelle operator.

Results related to the role of the entropy production (the fluctuation theorem and the detailed balance condition) in problems in Physics and Dynamics can be found in \cite{GC}, \cite{Jiang}, \cite{Maes}, \cite{Rue} and  \cite{Ben}.
 A  concrete example of a system where the entropy production plays an important role  is presented in \cite{Croo}: a classical gas confined in a
cylinder by a movable piston (see the first page of \cite{Croo}).

We would like to thank L. Cioletti for helpful comments during the writing of  this paper.

\section{Relations between the different concepts of information gain}\label{secbasic}

In this section we propose to explain a relation between the information gain given by \eqref{IG} and the specific information gain given by \eqref{eq2} and \eqref{eq3}.  In Thermodynamic Formalism a Lipschitz potential $\phi: \Omega \to\mathbb{R}$ is called \textbf{normalized} if $\sum_{x_1}e^{\phi(|x_1,x_2,x_3,...))} =1,\, \forall \,x_2,x_3,... \in \{1,...,d\}$. In this case $P(\phi)=0$ and furthermore
\[  e^{\phi(|x_1,x_2,x_3,...))} = \lim_{n\to\infty} \frac{\mu_{\phi}(|x_1,x_2,...,x_n])}{\sum_i \mu_\phi(|i,x_2,...,x_n])}\]
(see  \cite{PP}, cor. 3.2.2). We will call, for any shift-invariant probability $\mu$, \textbf{Jacobian} of $\mu$ the function
$$J^{\mu}(|x_1,x_2,...)) :=  \lim_{n\to\infty} \frac{\mu(|x_1,x_2,...,x_n])}{\sum_i \mu(|i,x_2,...,x_n])}=\lim_{n\to\infty} \frac{\mu(|x_1,x_2,...,x_n])}{ \mu(|x_2,...,x_n])},$$
which is defined $\mu-$a.e\footnote{our abstract definition corresponds to the inverse of the usual Jacobian $T^\prime$ for the action of a locally invertible map $T$  and the Lebesgue measure.} . In this way, for a normalized potential $\phi$, we have that $\log(J^{\mu_\phi}) = \phi$, and \eqref{eq3} can be rewritten as
\begin{equation}\label{eq3new}
h(\mu,\mu_\phi)  =-[\int \log(J^{\mu_\phi})\,d\mu +h(\mu)].
\end{equation}
We also remark that from Lemma 7 in \cite{LMMS2} the Kolmogorov-Sinai entropy satisfies

\begin{equation}\label{sbm}
h(\mu) = -\sup\left\{\int c\,d\mu \,|\, \begin{array}{c} c\,\,\text{is Lipschitz and}\\ \sum_{x_1}e^{c(|x_1,x_2,x_3...))} = 1\\ \,\forall\,x_2,x_3,...\in\{1,...,d\}\end{array}\right\}.
\end{equation}
In order to explain the relations between $h(\mu,\mu_\phi)$ and the information gain given by $\eqref{IG}$ we need also to extend $\eqref{IG}$. For a probability $\pi$ on the finite set $X\times Y$, we will call $J^{\pi}(x,y) := \frac{\pi_{x,y}}{\sum_x \pi_{x,y}}$ the \textbf{Jacobian} of the probability $\pi$ (which is defined $\pi$-a.e.). Then, we have that $H(\pi)$ given in \eqref{IG} satisfies $$H(\pi) = -\sum_{x=1}^{d}\sum_{y=1}^{r} {\pi_{x,y}}\log(J^{\pi}(x,y))=-\int \log(J^{\pi})\,d\pi.$$

In proposition \ref{Hfinito} of Appendix section \ref{appendix2} we will prove (in a similar way as in chap. 3 in \cite{M}) that
\begin{equation}\label{eqHfinito}	H(\pi) = - \sup \{ \sum_{x,y} f(x,y)\pi_{x,y}\,|\, \sum_{x\in X} e^{f(x,y)} = 1, \, \forall y\}.
\end{equation}

For any given probability $P$ on $X=\{1,...,d\}$ and any given probability 	$\tilde{Q}=(\tilde{q}_1,...,\tilde{q}_r)$ on $Y=\{1,...,r\}$, with $\tilde{q}_i>0,\, \forall i$,  consider the product measure $\pi_0 =P\times\tilde{Q}$ on  $ X\times Y $. Then,\newline
1. $\displaystyle{J^{\pi_0}(x,y)=\frac{p_x\tilde{q}_y}{\sum_xp_x\tilde{q}_y}=\frac{p_x\tilde{q}_y}{\tilde{q}_y}= p_x}$,\newline
2. $ \displaystyle{S(P)=-\sum_{x,y} {(\pi_0)_{x,y}}\log(p_x)= -\sum_{x,y} ({\pi_0})_{x,y}\log(J^{\pi_0}(x,y)) =H(\pi_0)}$,\newline
3. If $\pi $ is any probability on $X\times Y$ with $x$-marginal $P$, then,
\begin{align*}
IG(\pi, P) &= S(P) - H(\pi) \stackrel{2.}{=}   H(\pi_0) - H(\pi)\\&= - [\int \log(J^{\pi_0})\,d\pi_0 +H(\pi)]\\&=- [\int \log(J^{\pi_0})\,d\pi +H(\pi)],
\end{align*}
where the last equality is satisfied because $J^{\pi_0}(x,y)\stackrel{1.}{=}p_x$ depends only on the first coordinate, and the $x$-marginal of both probabilities $\pi$ and $\pi_0$ is the probability $P$.

This allows us to extend the definition of information gain \eqref{IG} in the following way:

\begin{definition}
Let  $\pi_0, \pi$ be probabilities on $X\times Y$, such that $(\pi_0)_{x,y}>0,\,\forall\,(x,y)\in X\times Y$. We define the information gain of $\pi$ with respect to $\pi_0$ by
\begin{equation}\label{IG2}
IG(\pi,\pi_0) =  - [\int \log( J^{\pi_0})\,d\pi +H(\pi)].
\end{equation}
\end{definition}

The expression of the information gain $IG(\pi,\pi_0)$ and the expression of the specific information gain $h(\mu,\mu_\phi)$ given in \eqref{eq3new} are similar. Furthermore, the Jacobians
and both variational characterizations of $h(\mu)$ and $H(\pi)$ given in \eqref{sbm} and \eqref{eqHfinito} are alike.


We believe that the next remark can help the reader in understanding why
the introduction of probability kernels is natural to replace finite
sets by measurable sets (in the study of Information gain).

\begin{remark}\label{remarkidea}
In the right hand side of above expression \eqref{IG2} does not appear $\pi_0$ but only $J^{\pi_0}$. If $\tilde{Q}$ is the $y$ marginal of $\pi_0$ then, by definition of $J^{\pi_0}$, for any function $f$,
\[\sum_{x,y} f(x,y) \pi_0(x,y) = \sum_{x,y} f(x,y)J^{\pi_0}(x,y)\tilde{Q}(y).\]
Furthermore, for each fixed $y$ we have that $\sum_x J^{\pi_0}(x,y) =1$. Therefore, for each fixed $y$, we can interpret $J^{\pi_0}$ as a probability in $X\times\{y\}$. In this way $J^{\pi_0}$ is a probability kernel in the sense of Definition \ref{deftransverse}. A similar remark is true for \eqref{eq3new}.		
\end{remark}

\section{Information Gain and probability kernels}\label{secIFS}

Our purpose in this  section is to extend the definition of information gain  $IG(\pi,\pi_0)$, given by \eqref{IG2}, for the case when $X$ and $Y$ are measurable spaces.  As we will see, a natural way of to extends \eqref{IG2} is by considering probability kernels and the notion of entropy given in \cite{LM}. This entropy is an extension of that previously introduced in \cite{LMMS} and \cite{MO} for compact spaces using an a priori probability. In \cite{MO} an entropy has been introduced for holonomic probabilities associated with iterated function systems (IFS), but we point out that the expression of the entropy in \cite{MO} does not use such structures. It may seem
 surprising, but it is related, by a variational principle, with the spectral radius of a transfer operator which is defined from the IFS. As we will see below the entropy considered in this section does not consider any dynamics.

From now on we consider $\sigma-$algebras $\mathcal{A}$ on $X$ and $\mathcal{B}$ on $Y$ and the product $\sigma-$algebra on $X\times Y$.
If $c:X\times Y\to\mathbb{R}$ is measurable then, for any fixed $y\in Y$, the function  $c_y(x):=c(x,y)$ defined on $X$ is measurable (see \cite{CK} theorem 6.7).

In order to make an identification with the setting of \cite{LM} we consider in the space $X\times Y$ the equivalence relation $(x_1,y_1)\sim(x_2,y_2)$, if and only if, $y_1=y_2$. So the equivalence classes are the horizontal lines of $X\times Y$. The so called transverse functions in \cite{LM} corresponds to probability kernels in the present setting.

 \begin{definition}
 	We will call of a \textbf{probability kernel} any family $\hat{\nu}=\{\hat{\nu}^y\,|\,y\in Y\}$ of probabilities on $X\times Y$, such that,
 	
 	1)  $\forall \,y \in Y$, we have $\hat{\nu}^y(X_y)=1$, where $X_y=\{(x,y)\,|\,x\in X\}$,
 	
 	2) $\forall A \subset X\times Y$ measurable, we have that $y\to \nu^y (A)$ is measurable.
 \end{definition}

If $X$ and $Y$ are metric spaces, as considered in \cite{LM}, then condition 1) is equivalent to say that the probability $\nu^y$ has support on $X_y$.  Another equivalent way of defining a probability kernel is as a family of probabilities $\hat{\nu}^y$ on $X$ such that for any measurable set $B\subset X$ we have that $y\to \nu^y (B)$ is measurable (to prove this statement, just adapt the reasoning of Theorem 6.4 in \cite{CK} to the current setting).

The next definition was taken from the reasoning of  \cite{LM}.

 \begin{definition} We define the entropy of any probability $\pi$ on $X\times Y$ relative to the probability kernel $\hat{\nu}$ as
 	\[H^{\hat{\nu}}(\pi) = -\sup\{\int c(x,y)\,d\pi(x,y)\,|\, \int e^{c(x,y)}\hat{\nu}^y(dx) =1\,\forall y,\, c\in \mathcal{F}(\pi)\},\]
 	where $\mathcal{F}(\pi)$ is the set of measurable functions with a well-defined integral with respect to $\pi$.
 \end{definition}

A well-defined integral, in the above definition, means that it is not $+\infty -\infty$. It follows from Lemma \ref{lemagap} below that we can take the above supremum over functions $c$ which are bounded below. Such functions belongs to $\mathcal{F}(\pi)$, even though we may have $\int c\,d\pi =+\infty$.

 Usually, we also fix  a probability $\nu$ on $X$ satisfying $\operatorname{supp}(\nu)=X$, which we call an {\it a priori} probability on $X$.  Given an a priori probability $\nu$ on $X$ and considering the identification of $X$ and $X_y$, we can consider the a priori probability kernel $\hat{\nu}$ given by $\hat{\nu}^{y}(dx) = \nu(dx)$ (for condition 2. see Theorem 6.4 in \cite{CK}). In this case we write $\hat{\nu} \equiv \nu$, and we denote $H^{\hat{\nu}}(\pi)$ also by $H^{{\nu}}(\pi)$, which will be given by
  \[H^{{\nu}}(\pi) = -\sup\{\int c(x,y)\,d\pi(x,y)\,|\, \int e^{c(x,y)}d\nu(x) =1\,\forall y,\, c\in\mathcal{F}(\pi)\}.\]

\begin{definition} We say that a measurable function $c:X\times Y \to \mathbb{R}$ is $\hat{\nu}$-normalized if
 \[\int e^{c(x,y)}\hat{\nu}^y(dx) =1\,,\forall y \in Y.\]
 If $\nu$ is an a priori probability on $X$, we say that $c:X\to\mathbb{R}$ is $\nu-$normalized, if it is measurable and $\int e^{c}d\nu=1$.
\end{definition}

\begin{example} If $\nu$ is an a priori probability on $X$ and $c:X\times Y \to \mathbb{R}$ is ${\nu}$-normalized, that is, 	\[\int e^{c(x,y)}d\nu(x) =1\,,\forall y \in Y,\]
then defining, for each $y$, the probability $\hat{\nu}^y$ on $X_y$ by $\hat{\nu}^y(dx) := e^{c(x,y)}d\nu(x)$, we get that $\hat{\nu}$ is a probability kernel. It corresponds to the case where all the probabilities $\hat{\nu}^y$ are densities for the same probability $\nu$. More generally, if $\hat{\nu}$ is a probability kernel and $c$ is $\hat{\nu}-$normalized, then $e^{c(x,y)}\hat{\nu}^{y}(dx)$ is a probability kernel too. If $Q$ is a probability on $Y$ we get also a probability $\pi:=\hat{\nu}\,dQ$ on $X\times Y$ by \eqref{transdisint} and the right hand side is a disintegration of $\pi$ with respect to the horizontal lines of $X\times Y$.  If $\hat{\nu}\equiv \nu$ we get $\hat{\nu}dQ = d\nu dQ$ is a product measure.

\end{example}

 \bigskip

The function $c=0$ is $\hat{\nu}-$normalized and therefore $H^{\hat{\nu}}(\pi)\leq 0$.
If  $\tilde{{\nu}}$ is a finite measure on $X$ satisfying $\tilde{{\nu}}(X)= d$ and $d{\hat{\nu}} \equiv \frac{1}{d}d\tilde{{\nu}}$, then $H^{\tilde{{\nu}}}(\pi) = H^{\hat{\nu}}(\pi) + \log(d)$, where $H^{\tilde{{\nu}}}$ is defined in a similar way. Now, taking  $X$ and $Y$ as finite sets, ${\tilde{\nu}}$ as the counting measure on $X$ and applying equation \eqref{eqHfinito},  we came to the conclusion that such definition of entropy is a natural extension of the definition of $H(\pi)$.

\bigskip

If $P$ is a probability on $X$ we also define
\[S^{\nu}(P) := - \sup\left\{ \int c(x)\,dP(x)\,|\, \int e^{c(x)}\,d\nu(x)=1 ,\, \,\text{where}\, c \in\mathcal{F}(P)\right\}.\]

We start by proving the next theorem which shows that the above definitions provide variational characterizations of the Kullback-Leibler divergence. It also shows that $-H^{\hat{\nu}}$ is equivalent to generalized conditional relative entropy (see chap. 5 in \cite{Gray}) as explained in the introduction section.

\begin{theorem}\label{theoKL} Let $P$ and $\nu$ be probabilities on $X$, $\pi$ be a probability on $X\times Y$ with $y-$marginal $Q$ and $\hat{\nu}$ be a probability kernel. Then,
	\[S^{\nu}(P) = - D_{KL}(P\,|\,\nu) \,\,\,\,and\,\,\,\, H^{\hat{\nu}}(\pi) = - D_{KL}(\pi \,|\, \hat{\nu} \,d Q).\]
	Consequently, if $\hat{\nu}\equiv \nu$, we have  $H^{{\nu}}(\pi) = - D_{KL}(\pi \,|\, \nu\times Q).$
\end{theorem}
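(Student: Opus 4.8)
The plan is to prove the two variational identities separately, although they share the same mechanism; the second one reduces essentially to a ``fiberwise'' application of the first. I would begin with the simpler statement $S^{\nu}(P) = -D_{KL}(P\,|\,\nu)$, since it is the classical Donsker--Varadhan / Gibbs variational formula and the kernel version will be built on top of it. The claim is that
\[
\sup\left\{\int c\,dP \;\Big|\; \int e^{c}\,d\nu = 1,\ c\in\mathcal{F}(P)\right\} = D_{KL}(P\,|\,\nu).
\]
For the inequality $\leq$: if $P\ll\nu$ with density $g=\frac{dP}{d\nu}$ and $c$ is $\nu$-normalized, write $c = \log\big(e^{c}\big)$ and apply Jensen's inequality to the probability measure $e^{c}\,d\nu$, or more directly use the elementary bound $\int c\,dP - D_{KL}(P|\nu) = \int \big(c - \log g\big)\,dP = \int \log\!\big(e^{c}/g\big)\,dP \le \log \int (e^{c}/g)\,dP = \log\int e^{c}\,d\nu = 0$, valid on $\{g>0\}$ which is $P$-full; when $P\not\ll\nu$ one exhibits a set $A$ with $\nu(A)=0<P(A)$ and a sequence $c_n = n\,\mathbf{1}_A + \text{const}_n$ of $\nu$-normalized functions with $\int c_n\,dP\to+\infty$, matching $D_{KL}=+\infty$. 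For the reverse inequality $\geq$, when $D_{KL}(P|\nu)<\infty$ the natural candidate is $c := \log g$ truncated (e.g. $c_n := \log\max(g,1/n) - \log\int \max(g,1/n)\,d\nu$) so that each $c_n$ is $\nu$-normalized and $\int c_n\,dP \to \int \log g\,dP = D_{KL}(P|\nu)$ by monotone/dominated convergence; some care is needed because $c=\log g$ itself need not be $\nu$-normalized (it is, actually, but $\int \log g\,dP$ could fail to be well-defined only in the $+\infty-\infty$ sense, which the definition of $\mathcal{F}(P)$ is designed to exclude — here one checks $\int (\log g)^- dP<\infty$ always holds). I would also invoke Lemma \ref{lemagap} (referenced in the excerpt) to restrict to functions bounded below, which streamlines the truncation argument.

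Next, for $H^{\hat{\nu}}(\pi) = -D_{KL}(\pi\,|\,\hat{\nu}\,dQ)$: set $\pi_0 := \hat{\nu}\,dQ$, so $\pi_0$ has $y$-marginal $Q$ and disintegration $\{\hat{\nu}^y\}$ over the horizontal lines. The constraint ``$\int e^{c(x,y)}\hat{\nu}^y(dx)=1$ for all $y$'' says exactly that $e^{c}\,d\pi_0$, disintegrated over the lines, is a probability kernel — equivalently $c$ is $\pi_0$-normalized in the fiberwise sense. I would then argue: for any such $c$,
\[
\int c\,d\pi \;=\; \int\!\!\int c(x,y)\,d\pi_{y}(x)\,dQ(y),
\]
where $\{\pi_y\}$ is a disintegration of $\pi$ over the lines (this exists because the equivalence relation here is the trivial product fibration $y\mapsto X_y$, so one can just take $\pi_y$ to be the conditional of $\pi$ given the second coordinate — I should note that for general measurable $X,Y$ a regular disintegration of $\pi$ need not exist, so I would instead avoid disintegrating $\pi$ and work directly, as in the $\leq$ direction below). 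Cleaner: for the bound $\int c\,d\pi \le D_{KL}(\pi|\pi_0)$, assume $\pi\ll\pi_0$ with density $G=\frac{d\pi}{d\pi_0}$ (else $D_{KL}=+\infty$ and we produce a diverging sequence of fiberwise-normalized $c_n$ supported on a $\pi_0$-null, $\pi$-positive set, exactly as before), and compute $\int c\,d\pi - D_{KL}(\pi|\pi_0) = \int \log(e^{c}/G)\,d\pi \le \log\int (e^c/G)\,d\pi = \log\int e^c\,d\pi_0 = \log\int\big(\int e^{c(x,y)}\hat{\nu}^y(dx)\big)dQ(y) = \log\int 1\,dQ = 0$, using Jensen and then \eqref{transdisint}. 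For the reverse inequality, take $c_n := \log\max(G,1/n) - r_n(y)$ where $r_n(y) := \log\int \max(G(x,y),1/n)\,\hat{\nu}^y(dx)$ is chosen to make each $c_n$ fiberwise-normalized; measurability of $y\mapsto r_n(y)$ follows from property 2) of the probability kernel, and $\int c_n\,d\pi \to D_{KL}(\pi|\pi_0)$ by the same monotone-convergence bookkeeping, noting $\int r_n(y)\,dQ(y)\to 0$ since $\max(G,1/n)\,d\pi_0 \to d\pi$ appropriately and $r_n \le \log(1 + \text{something} )\to$ manageable.

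The main obstacle I anticipate is the reverse inequality in the kernel case — specifically, verifying that the corrector $r_n(y)=\log\int\max(G,1/n)\,d\hat\nu^y$ is measurable in $y$ and that $\int r_n\,dQ\to 0$, because this is where the disintegration identity \eqref{transdisint}, Fubini/Tonelli over $X\times Y$, and the definition of $\mathcal{F}(\pi)$ (no $+\infty-\infty$) all have to cooperate; in particular one must rule out pathologies where $\int\log G\,d\pi$ is ill-defined rather than merely $+\infty$. I would handle this by first reducing via Lemma \ref{lemagap} to $c$ bounded below (hence in $\mathcal{F}(\pi)$ automatically and with a two-sided-controlled negative part), then establishing the result for bounded $G$ and passing to the limit. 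The final sentence of the theorem is immediate: when $\hat{\nu}\equiv\nu$, \eqref{transdisint} gives $\hat{\nu}\,dQ = \nu\times Q$, so $H^{\nu}(\pi) = -D_{KL}(\pi|\nu\times Q)$.
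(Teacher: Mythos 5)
Your proposal is correct and follows essentially the same route as the paper: Jensen's inequality against the Radon--Nikodym density for the $\le$ direction, the density itself (with a normalizing corrector $r_n$ to make it fiberwise $\hat{\nu}$-normalized) as the maximizer for $\ge$, and a blow-up sequence supported on a $\hat{\nu}\,dQ$-null, $\pi$-positive set for the non-absolutely-continuous case. The only stylistic difference is that you carry out the truncation $\max(G,1/n)$ explicitly where the paper instead invokes Lemma \ref{lemagap} together with the observation that $\int \log J_0\,d\pi$ is always well-defined because $t\log t$ is bounded below.
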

 The proof will be divided into several lemmas (the last one is Lemma \ref{notabscont} which will finish the proof).
 
 \begin{remark} It is known that (see for example chap.5 of \cite{Gray})
 \small	\[D_{KL}(P \,|\, \nu) = \sup\left\{ \int c\,dP -\log( \int e^{c}\,d\nu),\, \,\text{where}\, c \, \in\mathcal{F}(P)\,and\, \int e^{c}\,d\nu<\infty \right\}. \]\normalsize
  It follows that 	
 \[D_{KL}(P \,|\, {\nu} ) = -S^{\nu}(P)\,\, \text{and} \,\, D_{KL}(\pi \,|\, \hat{\nu} \,d Q) \geq -H^{\hat{\nu}}(\pi).\]  Anyway we provide a complete proof.
 	\end{remark} 
 
 \begin{definition} We say that a measurable function $J:X\times Y \to [0,+\infty)$ is a $\hat{\nu}-Jacobian$, if   $\int J(x,y)\,\hat{\nu}^y(dx) =1,\, \forall y\in Y$. \newline
Given an a priori probability $\nu$ on $X$, we say that a measurable function $J:X\to[0,+\infty)$ is a ${\nu}-$Jacobian if $\int J d{\nu}=1$.	
 \end{definition}

 If $c$ is $\hat{\nu}-$normalized, then $J=e^{c}$ is a $\hat{\nu}$-Jacobian. On the other hand, if $J$ is a $\hat{\nu}-$Jacobian and it does not assume the value zero, then $c=\log(J)$ is $\hat{\nu}-$normalized.

 \begin{lemma}\label{lemagap}
\[ H^{\hat{\nu}}(\pi) = - \sup\left\{ \int \log(J(x,y))\,d\pi(x,y)\,|\, J\, \,\text{is a}\, \,\,\hat{\nu}\text{-Jacobian} \right\}.\]
Furthermore, we can also take the supremum over positive Jacobians $J$ satisfying $\inf\{J(x,y)|x\in X, y\in Y\}>0$. 
 \end{lemma}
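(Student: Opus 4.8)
The plan is to show the two supremum expressions agree by comparing the constraint sets directly, and then to handle the approximation by uniformly positive Jacobians separately. First I would establish the easy inclusion. Recall that $c = 0$ is $\hat{\nu}$-normalized, so $J = e^c = 1$ is a $\hat{\nu}$-Jacobian; more generally, if $c$ is $\hat{\nu}$-normalized then $J := e^c$ is a $\hat{\nu}$-Jacobian (this is already noted in the excerpt), and $\int \log J \, d\pi = \int c \, d\pi$. Moreover every such $J$ is strictly positive, so it satisfies no worse than the displayed constraint $\inf J > 0$ would demand — actually one only gets positivity, not a uniform lower bound, from this direction, so for the main equality I would simply observe that ranging $c$ over $\hat{\nu}$-normalized functions is the same as ranging $J = e^c$ over \emph{strictly positive} $\hat{\nu}$-Jacobians (the correspondence $c \leftrightarrow \log J$ is a bijection there, again as noted), which gives
\[
H^{\hat{\nu}}(\pi) = -\sup\Big\{ \int \log J \, d\pi \,\Big|\, J \text{ a strictly positive } \hat{\nu}\text{-Jacobian} \Big\}.
\]
So the content of the first assertion is that allowing $J$ to vanish on a set does not increase the supremum, and the content of the second is that restricting to $\inf J > 0$ does not decrease it.

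For the first assertion ($J \geq 0$ allowed, possibly zero somewhere), the inequality $\leq$ in the sense that the sup over all nonnegative Jacobians dominates the sup over strictly positive ones is trivial since the latter set is smaller. For the reverse, given an arbitrary nonnegative $\hat{\nu}$-Jacobian $J$, I would perturb it to $J_\varepsilon := (1-\varepsilon) J + \varepsilon \cdot 1$, which is again a $\hat{\nu}$-Jacobian (the constraint $\int J \, \hat{\nu}^y(dx) = 1$ is affine, and $\int 1 \, \hat{\nu}^y(dx) = 1$ since $\hat{\nu}^y$ is a probability), and is now strictly positive: $J_\varepsilon \geq \varepsilon > 0$. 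By concavity of $\log$, $\log J_\varepsilon \geq (1-\varepsilon)\log J + \varepsilon \log 1 = (1-\varepsilon)\log J$ pointwise where $J>0$, and $\log J_\varepsilon \geq \log \varepsilon > -\infty$ everywhere. Integrating against $\pi$: if $\int \log J \, d\pi$ is finite or $+\infty$ one gets $\int \log J_\varepsilon \, d\pi \geq (1-\varepsilon)\int \log J \, d\pi$ (care with sign on the positive and negative parts — on $\{J \le 1\}$ use $\log J_\varepsilon \ge \log\varepsilon$ and dominated convergence; on $\{J>1\}$ use $\log J_\varepsilon \ge (1-\varepsilon)\log J$ and monotone convergence as $\varepsilon \downarrow 0$). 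Letting $\varepsilon \downarrow 0$ recovers a value $\geq \int \log J \, d\pi$ in the limit, so the sup over strictly positive Jacobians is at least the sup over all nonnegative ones.

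For the second assertion, I would refine the same perturbation: $J_\varepsilon = (1-\varepsilon)J + \varepsilon$ already satisfies $\inf_{X\times Y} J_\varepsilon \geq \varepsilon > 0$, so it lies in the smaller constraint set, and the same limiting argument $\varepsilon \downarrow 0$ shows the sup over $\{J : \inf J > 0\}$ is not smaller than the sup over strictly positive (hence over all nonnegative) $\hat{\nu}$-Jacobians; the opposite inequality is again trivial by set inclusion. The main obstacle I anticipate is the measure-theoretic bookkeeping in passing the inequality $\log J_\varepsilon \geq (1-\varepsilon)\log J$ through the integral when $\int \log J \, d\pi$ is not absolutely convergent — one must separately control $\int (\log J)^+ \, d\pi$ (monotone convergence, since $(\log J_\varepsilon)^+ \uparrow$ is not quite right, rather $(1-\varepsilon)(\log J)^+ \uparrow (\log J)^+$) and $\int (\log J_\varepsilon)^- \, d\pi \leq |\log\varepsilon| < \infty$, ensuring the integrals $\int \log J_\varepsilon \, d\pi$ are well-defined elements of $(-\infty,+\infty]$ and that their $\limsup$ as $\varepsilon \downarrow 0$ is at least $\int \log J \, d\pi$. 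Everything else is a direct comparison of constraint sets.
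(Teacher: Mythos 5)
Your proof is correct and follows essentially the same approach as the paper: both use the bijection $c \leftrightarrow e^c$ between $\hat{\nu}$-normalized functions and strictly positive $\hat{\nu}$-Jacobians for one inequality, and for the other perturb an arbitrary Jacobian $J$ to $J_n = \frac{J + 1/n}{1 + 1/n}$, which is exactly your $J_\varepsilon = (1-\varepsilon)J + \varepsilon$ under $\varepsilon = \frac{1}{n+1}$, noting it is a Jacobian bounded below by a positive constant, and then passing to the limit. The only minor difference is that the paper's limiting step is slightly more direct, observing $\int \log J\,d\pi \le \liminf_n \int \log(J + \tfrac{1}{n})\,d\pi = \liminf_n \int \log J_n\,d\pi$ (using that $\log(J+1/n)$ dominates $\log J$ pointwise and $\log(1+1/n)\to 0$), which sidesteps the concavity bound and the monotone/dominated convergence case split you describe.
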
 	
 \begin{proof}
 	If $c$ is $\hat{\nu}-$normalized then $J=e^{c}$ is a $\hat{\nu}$-Jacobian.	 It follows that
 	\[\sup\left\{ \int \log(J(x,y))\,d\pi(x,y)\,|\, J\, \text{is a}\, \,\hat{\nu}\text{-Jacobian} \right\}\]
 	\[\geq \sup\left\{ \int c(x,y)\,d\pi(x,y)\,|\, c\,\, \text{is}\, \,\hat{\nu}\text{-normalized} \right\}.\]
 	On the other hand, for any fixed $\hat{\nu}$-Jacobian $J$, let $J_n=\frac{J+\frac{1}{n}}{1+\frac{1}{n}}$. As $J\geq 0$ we have $J_n \geq \frac{1}{n+1}$. The function $J_n$ is also a $\hat{\nu}-$Jacobian. Furthermore,
 	\[\int \log(J)\,d\pi \leq \liminf_n \int \log (J+\frac{1}{n})d\pi = \liminf_n \int \log(J_n)\,d\pi.\]
 	As the function $c_n=\log(J_n)$ is $\hat{\nu}-$normalized and bounded below, this ends the proof.
 \end{proof}

\begin{lemma} Let $P$ be a probability on $X$ and $\pi$ be a probability on $X\times Y$, with $x-$marginal $P$. If $P$ is not absolutely continuous with respect to the a priori probability $\nu$, then $S^{\nu}(P)=-\infty$ and $H^{\nu}(\pi) = -\infty$.
\end{lemma}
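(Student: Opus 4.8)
The plan is to exploit the failure of absolute continuity directly, by exhibiting one-parameter families of $\nu$-normalized test functions whose $P$-integrals (respectively $\pi$-integrals) are unbounded above; this forces the relevant suprema to equal $+\infty$. Since $P\not\ll\nu$, fix a measurable set $A\subset X$ with $\nu(A)=0$ and $P(A)=:\delta>0$.

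First I would treat $S^\nu(P)$. For each $t\ge 0$ consider $c_t:=t\,\mathbf{1}_A$, which is bounded and measurable, hence $c_t\in\mathcal F(P)$. Because $\nu(A)=0$ we get $\int e^{c_t}\,d\nu=e^t\nu(A)+\nu(X\setminus A)=\nu(X)=1$, so $c_t$ is $\nu$-normalized. On the other hand $\int c_t\,dP=t\,P(A)=t\delta$, which tends to $+\infty$ as $t\to+\infty$. Therefore the supremum defining $S^\nu(P)$ equals $+\infty$, i.e. $S^\nu(P)=-\infty$.

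Next, for $H^\nu(\pi)$ I would use the same functions, now regarded as functions on $X\times Y$ that do not depend on $y$: set $\tilde c_t(x,y):=t\,\mathbf{1}_A(x)$. Again $\tilde c_t$ is bounded and measurable, so $\tilde c_t\in\mathcal F(\pi)$, and for every $y\in Y$ we have $\int e^{\tilde c_t(x,y)}\,d\nu(x)=e^t\nu(A)+\nu(X\setminus A)=1$; thus $\tilde c_t$ is admissible in the variational problem defining $H^\nu(\pi)$. Since the $x$-marginal of $\pi$ is $P$, we obtain $\int \tilde c_t\,d\pi=t\,\pi(A\times Y)=t\,P(A)=t\delta\to+\infty$ as $t\to+\infty$, so the supremum is $+\infty$ and $H^\nu(\pi)=-\infty$.

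I do not expect any real obstacle here: the only points needing a moment's care are that the test functions are genuinely admissible — that is, $\nu$-normalized and lying in $\mathcal F(P)$ (respectively $\mathcal F(\pi)$) — which is immediate since they are bounded and since $\nu(A)=0$ annihilates the only term that could spoil the normalization. (Alternatively, one could quote $D_{KL}(P\,|\,\nu)=+\infty$ for $P\not\ll\nu$ together with the identity $S^\nu(P)=-D_{KL}(P\,|\,\nu)$ recalled in the preceding remark, but the direct argument above is shorter and self-contained, in line with the paper's stated aim of giving a complete proof.)
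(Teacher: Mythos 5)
Your proof is correct and is essentially identical to the paper's: both fix a set $A$ with $\nu(A)=0$, $P(A)>0$, take the one-parameter family $c_\beta=\beta\,\mathbf{1}_A$ (bounded, hence admissible, and $\nu$-normalized since $\nu(A)=0$), and let $\beta\to+\infty$ to force the suprema to $+\infty$. The paper even treats $S^\nu$ and $H^\nu$ with the same function exactly as you do, so there is nothing to add.
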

\begin{proof}
If $P$ is not absolutely continuous 	
	with respect to $\nu$ then there exists a measurable set $A$, such that, $\nu(A)=0$ and $P(A)>0$. For each $\beta>0$, let $c_\beta:X\to\mathbb{R}$ be the measurable function defined as
	\[ c_{\beta}(x) = \left\{\begin{array}{cl} 0 & \text{if}\, x\in X-A\\ \beta & \text{if}\, x\in A\end{array} \right. .\]
	Then, we have $\int e^{c_\beta(x)}\,d\nu(x) = 1$ and $\int c_\beta(x)\, d\pi(x,y) = \int c_\beta(x)\, dP(x) = \beta P(A)$. 	As $\beta$ is arbitrary, we can take $\beta \to +\infty$, and then we get that $S^{\nu}(P) =-\infty$ and also that $H^{\nu}(\pi) = -\infty$.
\end{proof}

As usual, we use the notation $\mu \ll \nu$ to denote that $\mu$ is absolutely continuous with respect to $\nu$. If $P\ll \nu$ we denote by $\frac{dP}{d\nu}$ the Radon-Nikodyn derivative of $P$ with respect to $\nu$, which is a  measurable function.

Observe that $J_0:=\frac{dP}{d\nu}$ is a $\nu-$Jacobian.
Let $X_0=\{x\in X\,|\, J_0(x)>0\}$. Given any measurable and bounded function $f:X\to\mathbb{R}$, we have
\[\int f\,dP = \int f \cdot J_0\,d\nu = \int f\cdot I_{X_0}\cdot J_0\,d\nu = \int_{X_0}f\,dP.\]
It follows that $P(X_0)=1$ and
$\int f(x) \, dP(x) = \int_{X_0} f(x)\, dP(x)  $,  for any measurable function $f$.

Furthermore the integral $\int \log(J_0) dP$ is well defined (it can be $+\infty$) because $\int \log(J_0) dP = \int \log(\frac{dP}{d\nu})\frac{d P}{d\nu} d\nu $ and the function $x\log(x)$ is bounded below.  

\bigskip

The next result shows that $-S^{\nu}(P)$ is the  Kullback-Leibler divergence of $P$ with respect to $\nu$.

\begin{lemma}\label{kullback} Let $P$ be a probability on $X$, such that, $P\ll \nu$.  Then, $$S^{\nu}(P) = -D_{KL}(P\,|\,\nu)=-\int \log(\frac{dP}{d\nu})\,dP.$$
\end{lemma}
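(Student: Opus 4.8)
The plan is to establish the two inequalities $S^{\nu}(P) \leq -D_{KL}(P\,|\,\nu)$ and $S^{\nu}(P) \geq -D_{KL}(P\,|\,\nu)$ separately. For the upper bound on the supremum defining $S^{\nu}$, fix any $\nu$-normalized $c \in \mathcal{F}(P)$ and apply Jensen's inequality. Since $P \ll \nu$ and $J_0 := \tfrac{dP}{d\nu}$ is a $\nu$-Jacobian with $P(X_0)=1$ where $X_0 = \{J_0 > 0\}$, I would write $\int c\,dP = \int_{X_0} c\,dP$ and then compare $\int c \, dP$ with $\int \log(J_0)\,dP$. The natural device is to consider $\int e^{c - \log J_0}\,dP = \int_{X_0} e^{c} J_0^{-1}\,J_0\,d\nu = \int_{X_0} e^{c}\,d\nu \leq \int_X e^{c}\,d\nu = 1$. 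By Jensen's inequality applied to the convex function $-\log$ (or $\log$ being concave) with respect to the probability $P$, one gets $\int (c - \log J_0)\,dP \leq \log \int e^{c - \log J_0}\,dP \leq 0$, hence $\int c\,dP \leq \int \log J_0\,dP = D_{KL}(P\,|\,\nu)$. Taking the supremum over $c$ gives $-S^{\nu}(P) \leq D_{KL}(P\,|\,\nu)$. A technical point here is to handle the case where $\int \log J_0\,dP = +\infty$ (then the inequality is trivial) and to make sure the subtraction $c - \log J_0$ and the application of Jensen are legitimate; by the remark in the excerpt $\int \log J_0\,dP$ is always well defined in $(-\infty,+\infty]$, and one can first reduce to $c$ bounded below via Lemma \ref{lemagap}.

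For the reverse inequality, the idea is to exhibit near-optimal competitors. If $J_0 > 0$ everywhere (i.e.\ $X_0 = X$), then $c := \log J_0$ is itself $\nu$-normalized and $\int c\,dP = \int \log J_0\,dP = D_{KL}(P\,|\,\nu)$, giving $S^{\nu}(P) \leq -D_{KL}(P\,|\,\nu)$ directly (using $\int c\,dP$ possibly $=+\infty$, which only helps). In general $J_0$ may vanish on a $\nu$-positive set, so I would truncate: for $n \in \mathbb{N}$ set $J_n := (J_0 \vee \tfrac{1}{n})/Z_n$ where $Z_n = \int (J_0 \vee \tfrac1n)\,d\nu \geq 1$ is a normalizing constant, so $J_n$ is a positive $\nu$-Jacobian bounded below, and $c_n := \log J_n$ is $\nu$-normalized. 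Since $P$ is concentrated on $X_0$ where $J_0 \vee \tfrac1n = J_0$ for $n$ large at each point, $\int \log(J_0 \vee \tfrac1n)\,dP \to \int \log J_0\,dP$ by monotone/dominated convergence (the integrand decreases to $\log J_0$ on $X_0$ and is bounded below), while $Z_n \downarrow 1$ so $\log Z_n \to 0$. Hence $\int c_n\,dP = \int \log(J_0 \vee \tfrac1n)\,dP - \log Z_n \to D_{KL}(P\,|\,\nu)$, which shows $-S^{\nu}(P) \geq D_{KL}(P\,|\,\nu)$.

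The main obstacle I anticipate is the careful bookkeeping around integrability: ensuring that $\int c\,dP$ is well defined for the competitors used, that $\int \log J_0\,dP$ being $+\infty$ does not break either direction, and that the convergence $\int \log(J_0 \vee \tfrac1n)\,dP \to \int \log J_0\,dP$ is justified (one can split into $\{J_0 \geq 1\}$ and $\{0 < J_0 < 1\}$; on the former the integrand is eventually $\log J_0$ and nonnegative, on the latter it is bounded between $\log J_0$ and $0$ so dominated convergence applies since $\log J_0 = \log\tfrac{dP}{d\nu} \in L^1(\nu)$ is not automatic — but $\int (\log J_0)_- \,dP \leq \int (x\log x)_-\,d\nu < \infty$ controls the negative part, which is all that is needed). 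Everything else — Jensen's inequality, the reduction to $c$ bounded below, and the normalization constants — is routine once the integrability is pinned down.
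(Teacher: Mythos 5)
Your proof is correct and takes essentially the same route as the paper: the Jensen step (writing $\int_{X_0} e^{c}\,d\nu \le 1$ and applying concavity of $\log$) is identical to the paper's comparison $\int_{X_0}\log(J/J_0)\,dP \le \log\int_{X_0}(J/J_0)\,dP \le 0$. For the lower bound your manual truncation $(J_0\vee\tfrac1n)/Z_n$ works, but it is redundant given Lemma \ref{lemagap}: that lemma already converts $-S^{\nu}(P)$ into a supremum over all $\nu$-Jacobians $J:X\to[0,\infty)$, and $J_0=\tfrac{dP}{d\nu}$ is such a Jacobian, so $\int\log J_0\,dP\le -S^{\nu}(P)$ follows at once.
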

\begin{proof} Let $J_0:=\frac{dP}{d\nu}$ and $X_0:=\{x\in X\,|\, J_0(x)>0\}$. We claim that \[\int_{X_0}\log(J_0)\, dP =  \sup\left\{ \int_{X_0} \log(J(x))\,dP(x)\,|\,  J\,\,\text{is a $\nu-$Jacobian}\right\}.\]
Indeed, from Lemma \ref{lemagap} we can consider a $\nu-$Jacobian $J:X\to(0,+\infty)$ such that $\inf_{x,y} J(x,y)>0$. By applying the Jensen's inequality we have
\[\int_{X_0} \log\left(\frac{J}{J_0}\right)\,dP \leq \log \int_{X_0}\frac{J}{J_0}\,dP = \log \int_{X_0} J \,d\nu \leq    \log \int J\,d\nu =0.\]
This shows that
\[ \int_{X_0} \log(J)\, dP \leq \int_{X_0} \log(J_0)\, dP .\]
\end{proof}	

A similar result for $\pi$ will be  given by the next result.

\begin{lemma}\label{pijacobian} Assume that  there exists a $\hat{\nu}-$Jacobian  $J$ on $X\times Y$  satisfying
\begin{equation}\label{eqpijacobi}
\iint f(x,y)J_0(x,y) \, \hat{\nu}^{y}(dx)d\pi(x,y) = \int f(x,y)\,d\pi(x,y),
\end{equation}
for any measurable function $f:X\times Y \to \mathbb{R}$.	 Then, $$H^{\hat{\nu}}(\pi) = -\int \log(J_0)\, d\pi.$$ 
\end{lemma}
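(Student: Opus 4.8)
The plan is to read hypothesis \eqref{eqpijacobi} as saying that $J_0$ is the density of a Rokhlin disintegration of $\pi$ over its $y$-marginal, and then to reduce the computation of $H^{\hat{\nu}}(\pi)$ to the scalar Kullback--Leibler computation of Lemma \ref{kullback} carried out fiberwise. Write $Q$ for the $y$-marginal of $\pi$ and, for each $y$, set $\hat{\pi}^y(dx):=J_0(x,y)\,\hat{\nu}^y(dx)$. Since $J_0$ is a $\hat{\nu}$-Jacobian, $\hat{\pi}^y$ is for every $y$ a probability concentrated on $X_y$, absolutely continuous with respect to $\hat{\nu}^y$ with Radon--Nikodym derivative $J_0(\cdot,y)$, and $\hat{\pi}:=\{\hat{\pi}^y\}_{y\in Y}$ is again a probability kernel. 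The inner integral on the left-hand side of \eqref{eqpijacobi} depends only on $y$, so \eqref{eqpijacobi} says precisely that $\int f\,d\pi=\iint f(x,y)J_0(x,y)\,\hat{\nu}^y(dx)\,dQ(y)$, i.e.\ $\pi=\hat{\pi}\,dQ$ in the sense of \eqref{transdisint}; testing this with $f=I_{X_0^{c}}$, where $X_0:=\{(x,y):J_0(x,y)>0\}$, gives $\pi(X_0)=1$.

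Next I would record that $\int\log(J_0)\,d\pi$ is well defined with value in $(-\infty,+\infty]$: extending \eqref{eqpijacobi} from bounded to sign-definite integrands by monotone convergence, $\int\log(J_0)\,d\pi=\iint\log(J_0(x,y))\,J_0(x,y)\,\hat{\nu}^y(dx)\,dQ(y)$, and the integrand is bounded below because $t\log t\ge -e^{-1}$; in particular $\log(J_0)\in\mathcal{F}(\pi)$. The inequality $H^{\hat{\nu}}(\pi)\le -\int\log(J_0)\,d\pi$ is then immediate from Lemma \ref{lemagap}, since $J_0$ is one of the $\hat{\nu}$-Jacobians over which the supremum there is taken, so $-H^{\hat{\nu}}(\pi)=\sup_J\int\log(J)\,d\pi\ge\int\log(J_0)\,d\pi$.

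For the reverse inequality I would let $c$ be an arbitrary $\hat{\nu}$-normalized function and, using Lemma \ref{lemagap}, assume without loss of generality that $c=\log(J)$ for some $\hat{\nu}$-Jacobian $J$ with $\inf J>0$. By the disintegration, $\int c\,d\pi=\int\big(\int\log(J(x,y))\,\hat{\pi}^y(dx)\big)\,dQ(y)$. For each fixed $y$ the slice $c(\cdot,y)$ is $\hat{\nu}^y$-normalized, $\hat{\pi}^y$ is a probability concentrated on $X_0\cap X_y$, and Jensen's inequality for the concave function $\log$ (this is the fiberwise instance of Lemma \ref{kullback}, with $P=\hat{\pi}^y$ and $\nu=\hat{\nu}^y$) gives
\[
\int_{X_0\cap X_y}\log\!\Big(\tfrac{J(x,y)}{J_0(x,y)}\Big)\,\hat{\pi}^y(dx)\ \le\ \log\!\int_{X_0\cap X_y}\tfrac{J(x,y)}{J_0(x,y)}\,\hat{\pi}^y(dx)\ =\ \log\!\int_{X_0\cap X_y}J(x,y)\,\hat{\nu}^y(dx)\ \le\ 0,
\]
that is, $\int\log(J)\,d\hat{\pi}^y\le\int\log(J_0)\,d\hat{\pi}^y$ for every $y$. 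Integrating in $y$ against $Q$ — both sides have well-defined $Q$-integrals, the left because $\log J$ is bounded below, the right because $\int\log(J_0)\,d\hat{\pi}^y=D_{KL}(\hat{\pi}^y\,|\,\hat{\nu}^y)\ge 0$ — and using the disintegration once more yields $\int c\,d\pi\le\int\log(J_0)\,d\pi$. Taking the supremum over $c$ gives $-H^{\hat{\nu}}(\pi)\le\int\log(J_0)\,d\pi$, and combined with the previous paragraph this proves $H^{\hat{\nu}}(\pi)=-\int\log(J_0)\,d\pi$.

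The conceptual content is brief: disintegrate $\pi$ using $J_0$ and apply the scalar result of Lemma \ref{kullback} on each horizontal line $X_y$. Accordingly, I expect the only real work to be measure-theoretic bookkeeping — extending the disintegration identity \eqref{eqpijacobi} to the unbounded integrands $\log(J_0)$ and $\log(J)$, checking that no integral has the indeterminate form $\infty-\infty$, and legitimizing the integrated Jensen inequality together with the uses of Fubini--Tonelli. The delicate region is where $J_0$ is small (so $\log(J_0)$ is very negative), possibly combined with $J$ large; the set $X_0$, the reduction to $\inf J>0$, and the bound $t\log t\ge -e^{-1}$ are exactly the tools that keep these integrals under control.
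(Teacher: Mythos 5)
Your proof is correct and takes essentially the same route as the paper's: both arguments compare an arbitrary $\hat{\nu}$-Jacobian $J$ (reduced via Lemma \ref{lemagap} to the case $\inf J>0$) against $J_0$ by applying Jensen's inequality to $\log(J/J_0)$ on the set where $J_0>0$, then use the disintegration identity \eqref{eqpijacobi} together with $\int J\,\hat{\nu}^y(dx)=1$ to bound the result by $\log 1=0$. The only cosmetic difference is that the paper runs Jensen once globally against $\pi$, whereas you run it fiberwise against each $\hat{\pi}^y$ and then integrate in $y$, which is equivalent but adds some bookkeeping that you handle correctly.
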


\begin{proof}
	The reasoning is similar to the previous case.
	The set $A_0 =\{(x,y)\in X\times Y | J_0(x,y) > 0\}$ satisfies $\pi(A_0) =1$. For any $\hat{\nu}-$Jacobian $J:X\times Y \to (0,+\infty),$ satisfying $\inf_{x,y}J(x,y)>0$ we have
\[\int_{A_0} \log\left(\frac{J}{J_0}\right)\,d\pi \leq \log \int_{A_0}\frac{J}{J_0}\,d\pi = \log \int \frac{J}{J_0}\cdot I_{A_0} \cdot J_0\,\,\hat{\nu}^y(dx)d\pi(x,y) \]
\[= \log \int J\cdot I_{A_0} \,\,\hat{\nu}^y(dx)d\pi(x,y)\leq    \log \int J \,\,\hat{\nu}^y(dx)d\pi(x,y) = 0 .\]
	\end{proof}

We will say that a function $J_0$ satisfying \eqref{eqpijacobi} is {\bf a $\hat{\nu}-$Jacobian of} $\pi$.

\medskip

Denoting by $Q$ the $y-$marginal of $\pi$, the equation \eqref{eqpijacobi} can be rewritten as
\[ \iint f(x,y)J_0(x,y) \, \hat{\nu}^{y}(dx)dQ(y) = \int f(x,y)\,d\pi(x,y)\]
and so $J_0(x,y) \, \hat{\nu}^{y}(dx)dQ(y)$ is a disintegration of $\pi$ with respect to the horizontal lines of $X\times Y$. Supposing also that $\nu$ is an a priori probability on $X$, and $\hat{\nu}\equiv \nu$, we get that $\pi \ll \nu\times Q$, with $\frac{d\pi}{d\nu dq}=J_0$. Then, under the hypotheses of the above proposition and assuming that $\hat{\nu}\equiv \nu$, we get (see also \eqref{KLmutual})
\[H^{\nu}(\pi) = - D_{KL}(\pi\,|\,\nu\times Q).\]

\smallskip

\begin{lemma} Let $\hat{\nu}$ be a probability kernel and $\pi$ be a probability on $X\times Y$ with $y-$marginal $Q$. Suppose that $\pi \ll \hat{\nu}\, dQ$.  Then
	$$ H^{\hat{\nu}}(\pi)=- D_{KL}(\pi\,|\,\hat{\nu}\, dQ).$$
\end{lemma}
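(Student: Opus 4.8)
The plan is to reduce this lemma to Lemma~\ref{pijacobian} by producing an explicit $\hat{\nu}$-Jacobian of $\pi$ from the Radon--Nikodym hypothesis. Since $\pi \ll \hat{\nu}\,dQ$, let $J_0 = \frac{d\pi}{d(\hat{\nu}\,dQ)}$ be the Radon--Nikodym derivative, which is a measurable function $J_0 : X\times Y \to [0,+\infty)$. First I would check that $J_0$ satisfies the disintegration identity \eqref{eqpijacobi}: for any measurable (bounded, or bounded below) $f$,
\[
\int f(x,y)\,d\pi(x,y) = \int f(x,y) J_0(x,y)\, d(\hat{\nu}\,dQ)(x,y) = \iint f(x,y) J_0(x,y)\,\hat{\nu}^y(dx)\,dQ(y),
\]
where the last step is the definition \eqref{transdisint} of $\hat{\nu}\,dQ$. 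Since $dQ$ is the $y$-marginal of $\pi$ (this is where $Q$ being the $y$-marginal of $\pi$ is used — one gets it by taking $f$ depending only on $y$), the right-hand side equals $\iint f(x,y) J_0(x,y)\,\hat{\nu}^y(dx)\,d\pi(x,y)$, which is exactly \eqref{eqpijacobi}.

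Next I would verify that $J_0$ is a genuine $\hat{\nu}$-Jacobian, i.e. that $\int J_0(x,y)\,\hat{\nu}^y(dx) = 1$ for ($Q$-a.e., hence effectively all) $y$. This follows by applying the disintegration of $\hat{\nu}\,dQ$ to the constant function $f\equiv 1$: $\int J_0(x,y)\,\hat{\nu}^y(dx)\,dQ(y) = \pi(X\times Y) = 1$, and more precisely testing against $f(x,y) = g(y)$ for arbitrary measurable $g$ forces $\int J_0(x,y)\,\hat{\nu}^y(dx) = 1$ for $Q$-a.e.\ $y$; since the definition of $H^{\hat{\nu}}$ only ever integrates against $\pi = \hat\nu\,dQ$, a $Q$-null modification of $J_0$ on a set of $y$'s is harmless (alternatively one redefines $J_0 \equiv 1$ on the exceptional $y$-fibers to get a Jacobian in the strict sense). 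With $J_0$ established as a $\hat{\nu}$-Jacobian of $\pi$ in the sense of \eqref{eqpijacobi}, Lemma~\ref{pijacobian} applies directly and gives $H^{\hat{\nu}}(\pi) = -\int \log(J_0)\,d\pi$, while by definition $D_{KL}(\pi\,|\,\hat{\nu}\,dQ) = \int \log\bigl(\frac{d\pi}{d(\hat\nu\,dQ)}\bigr)\,d\pi = \int \log(J_0)\,d\pi$, so the two sides coincide.

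The one point requiring a little care — and the only real obstacle — is the measure-theoretic step of disintegrating $\hat{\nu}\,dQ$ and matching it against $\pi$: one must ensure the identity \eqref{eqpijacobi} holds for all $f$ in the relevant class (not just bounded ones), which is handled by the usual monotone-class / truncation argument using that $\log(J_0)$ has a well-defined integral since $x\log x$ is bounded below, exactly as noted in the paragraph preceding Lemma~\ref{kullback}. Everything else is bookkeeping, and the absolutely-continuous case is by design the ``generic'' one that Lemma~\ref{pijacobian} was set up to cover; the complementary non-absolutely-continuous case is what the subsequent Lemma~\ref{notabscont} will treat to complete the proof of Theorem~\ref{theoKL}.
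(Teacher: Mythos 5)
Your proposal is correct and follows essentially the same route as the paper's proof: take the Radon--Nikodym derivative $J_0 = d\pi/d(\hat{\nu}\,dQ)$, test against functions of $y$ alone to conclude $\int J_0(x,y)\,\hat{\nu}^y(dx)=1$ for $Q$-a.e.\ $y$, modify $J_0$ on a $\pi$-null set to obtain a bona fide $\hat{\nu}$-Jacobian, and then apply Lemma~\ref{pijacobian}. The paper's proof is exactly this, including the same ``replace $J$ by $1$ on the exceptional fibers'' fix.
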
 	

\begin{proof} We suppose that $\pi \ll \hat{\nu}\, dQ$ and we denote by $J$ its Radon-Nikodyn derivative. Then, for any measurable function $g:X\times Y \to \mathbb{R}$ we have
	\[\iint g(x,y) J(x,y) \, \hat{\nu}^y(dx)dQ(y) = \int g(x,y) \, d\pi(x,y).\]
	As $Q$ is the $y$-marginal of $\pi$, taking functions $g$ depending just of the second coordinate, we get
	\[\int g(y) [\int J(x,y) \, \hat{\nu}^y(dx)]dQ(y) = \int g(y) \, dQ(y).\]
 Then $\int J(x,y) \, \hat{\nu}^y(dx) = \frac{dQ}{dQ} =1$ for $Q-$a.e. $y$. Replacing $J$ by $1$ in a subset of $X\times Y$ having zero measure with respect to $\pi$, we get a measurable Jacobian $\tilde{J}$ satisfying, for any measurable  function $g:X\times Y \to \mathbb{R}$,
	\[\iint g(x,y) \tilde{J}(x,y) \, \hat{\nu}^y(dx)dQ(y) = \int g(x,y) \, d\pi(x,y).\]
	It follows from  Lemma \ref{pijacobian} that
	\[H^{\hat{\nu}}(\pi) = -\int \log(\tilde{J})\, d\pi=  - D_{KL}(\pi\,|\,\hat{\nu}\, dQ).\]
\end{proof}


\begin{lemma}\label{notabscont}  Let $\hat{\nu}$ be a probability kernel and $\pi$ be a probability on $X\times Y$, with $y-$marginal $Q$. If $\pi$ is not absolutely continuous with respect to $\hat{\nu}\, d Q$, then $H^{\hat{\nu}}(\pi) = -\infty$.
\end{lemma}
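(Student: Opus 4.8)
The plan is to show that when $\pi \not\ll \hat{\nu}\,dQ$, the supremum defining $-H^{\hat{\nu}}(\pi)$ is $+\infty$ by exhibiting, for each $\beta > 0$, a $\hat{\nu}$-normalized function $c_\beta$ with $\int c_\beta\,d\pi \geq \beta \cdot (\text{positive constant})$. First I would invoke the Lebesgue decomposition of $\pi$ with respect to the probability $\pi_0 := \hat{\nu}\,dQ$ on $X \times Y$: write $\pi = \pi_{ac} + \pi_s$, where $\pi_{ac} \ll \pi_0$ and $\pi_s \perp \pi_0$. Since $\pi \not\ll \pi_0$, we have $\pi_s \neq 0$, so there is a measurable set $E \subset X\times Y$ with $\pi_0(E) = 0$ and $\pi(E) = \pi_s(E) = \alpha > 0$.

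The key step is to turn this set $E$ into a $\hat{\nu}$-normalized function. The natural candidate is $c_\beta = \beta\, I_E$, but this need not be $\hat{\nu}$-normalized fiberwise: we need $\int e^{c_\beta(x,y)}\hat{\nu}^y(dx) = 1$ for every $y$. Here I would use that $\pi_0 = \hat{\nu}\,dQ$ means $\pi_0(E) = \int \hat{\nu}^y(E_y)\,dQ(y) = 0$ where $E_y = \{x : (x,y)\in E\}$, so $\hat{\nu}^y(E_y) = 0$ for $Q$-a.e. $y$. Modifying $E$ on a $\pi_0$-null set of fibers if necessary (which does not change $\pi_0(E)=0$), I can arrange $\hat{\nu}^y(E_y) = 0$ for \emph{every} $y \in Y$ while keeping $\pi(E)$ unchanged up to a $\pi$-null adjustment — but caution is needed, since altering fibers might change $\pi(E)$. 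Instead, the cleaner route: let $E' = \{y : \hat{\nu}^y(E_y) > 0\}$; then $\int_{E'} \hat{\nu}^y(E_y)\,dQ(y) = 0$ forces $Q(E') = 0$, hence $\pi$ restricted to those fibers, $\int_{E'}\hat{\nu}^y(\cdot)\,dQ(y)$-type mass, is handled by noting that the $y$-marginal of $\pi$ is $Q$, so $\pi(\{(x,y): y \in E'\}) = Q(E') = 0$. Therefore replacing $E$ by $E \setminus \{(x,y): y\in E'\}$ changes neither $\pi(E) = \alpha$ nor the property $\hat{\nu}^y(E_y) = 0$, and now $\hat{\nu}^y(E_y) = 0$ for every $y$.

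With this $E$ in hand, set $c_\beta := \beta\, I_E$. Then for every $y$, $\int e^{c_\beta(x,y)}\hat{\nu}^y(dx) = e^\beta \hat{\nu}^y(E_y) + \hat{\nu}^y(X_y \setminus E_y) = 0 + 1 = 1$, so $c_\beta$ is $\hat{\nu}$-normalized; it is bounded below (by $0$), hence lies in $\mathcal{F}(\pi)$. Moreover $\int c_\beta\,d\pi = \beta\,\pi(E) = \beta\alpha$. Letting $\beta \to +\infty$ gives $\sup \geq +\infty$, hence $H^{\hat{\nu}}(\pi) = -\infty$, as claimed.

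The main obstacle I anticipate is the fiberwise bookkeeping in the middle step: the definition of $\pi_0 = \hat{\nu}\,dQ$ gives $\hat{\nu}^y(E_y) = 0$ only for $Q$-almost every $y$, and I must carefully discard the exceptional fibers without disturbing $\pi(E)$. The observation that makes this work is that the exceptional set of $y$'s is $Q$-null and $Q$ is exactly the $y$-marginal of $\pi$, so those fibers carry no $\pi$-mass either; everything else is routine.
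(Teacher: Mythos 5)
Your proof is correct and follows essentially the same route as the paper: produce a set $E$ with $\pi(E)>0$ but $\hat{\nu}^y(E_y)=0$ for $Q$-a.e.\ $y$, discard the $Q$-null exceptional fibers (using that $Q$ is the $y$-marginal of $\pi$) to make this hold for every $y$, and then push $c_\beta=\beta\,I_E$ to $+\infty$. The only cosmetic difference is that you invoke the Lebesgue decomposition to obtain $E$, whereas the paper just reads it off directly from the negation of absolute continuity; both give the same set and the rest of the argument is identical.
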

\begin{proof}
	If $\pi$ is not absolutely continuous 	
	with respect to $\hat{\nu}\, dQ$, then, there exists a measurable set $A\subset X\times Y$, such that, $\int \hat{\nu}^y(A)\,dQ(y)=0$ and $\pi(A)>0$.
	It follows that $\{y\,|\, \hat{\nu}^y(A) \neq 0\}$ is a measurable set on $Y$ satisfying $Q(\{y\,|\, \hat{\nu}^y(A) \neq 0\}) = 0$. The set $X\times \{y\,|\, \hat{\nu}^y(A) \neq 0\}$ is measurable in $X\times Y$ and, as the $y-$marginal of $\pi$ is $Q$, we get $\pi ( X\times \{y\,|\, \hat{\nu}^y(A) \neq 0\}) = Q(\{y\,|\, \hat{\nu}^y(A) \neq 0\}) = 0$. Let $B = A - (X\times  \{y\,|\, \hat{\nu}^y(A)) \neq 0\})$. The set $B$  is measurable and $\pi(B) =\pi(A)>0$, while $\hat{\nu}^y(B)=0\,\forall y \in Y$.
	
	For each $\beta>0$, let $c_\beta:X\times Y\to\mathbb{R}$ be the measurable function defined as
	\[ c_{\beta}(x,y) = \left\{\begin{array}{cl} 0 & \text{if}\, (x,y)\in B^C\\ \beta & \text{if}\, (x,y)\in B\end{array} \right. .\]
Then, for each fixed $y$ we have
	\[\int e^{c_\beta(x,y)}\,d\hat{\nu}^y(dx) = e^\beta\hat{\nu}^y(B) + e^{0}\hat{\nu}^y(B^C)   =1.\]
	This shows that $c_\beta$ is $\hat{\nu}-$normalized. Furthermore, $\int c_\beta(x,y)\, d\pi(x,y) =  \beta \pi(B)$.
	As $\beta$ is arbitrary we can take $\beta \to +\infty$ and then we get that  $H^{\hat{\nu}}(\pi) = -\infty$.
\end{proof}

The above results end the proof of Theorem \ref{theoKL}.

\bigskip

\begin{proposition}  Let $P$ be a probability on $X$ satisfying $P\ll \nu$. Consider any probability $Q$ on $Y$ and any probability $\pi$ on $X\times Y$, with $x-$marginal $P$. Then, we have:\newline
	1. $H^{\nu}(\pi) \leq S^{\nu}(P)$\newline
	2. $S^{\nu}(P) = H^{\nu}(P\times Q).$
\end{proposition}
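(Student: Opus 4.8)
The plan is to reduce both statements to the variational characterizations already proved, using the key observation that the defining constraint on $c$ in $S^\nu(P)$ involves only a function on $X$, whereas the constraint in $H^\nu(\pi)$ allows $c$ to depend on $y$ as well, but the normalization $\int e^{c(x,y)}\,d\nu(x)=1$ is imposed for \emph{every} $y$.

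For part 1, I would argue that any admissible competitor $c\colon X\to\mathbb R$ in the supremum defining $S^\nu(P)$ — that is, a $\nu$-normalized measurable function with well-defined $P$-integral — is also an admissible competitor in the supremum defining $H^\nu(\pi)$, by viewing it as $\tilde c(x,y):=c(x)$ on $X\times Y$. Indeed $\int e^{\tilde c(x,y)}\,d\nu(x)=\int e^{c(x)}\,d\nu(x)=1$ for all $y$, so $\tilde c$ is $\nu$-normalized on $X\times Y$; and since $\pi$ has $x$-marginal $P$, we have $\int \tilde c\,d\pi=\int c\,dP$, so the integral is well defined and the two values agree. Hence the supremum over the larger admissible family defining $-H^\nu(\pi)$ dominates the supremum defining $-S^\nu(P)$, i.e. $-H^\nu(\pi)\ge -S^\nu(P)$, which is exactly $H^\nu(\pi)\le S^\nu(P)$. (Alternatively one can invoke Theorem \ref{theoKL}: $H^\nu(\pi)=-D_{KL}(\pi\,|\,\nu\times Q)$ and $S^\nu(P)=-D_{KL}(P\,|\,\nu)$, and the monotonicity of $D_{KL}$ under pushforward by the projection $(x,y)\mapsto x$ gives $D_{KL}(\pi\,|\,\nu\times Q)\ge D_{KL}(P\,|\,\nu)$; but the direct comparison of competitors is cleaner and self-contained.)

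For part 2, I would compute both sides via Theorem \ref{theoKL}. Since $P\ll\nu$, the product measure $P\times Q$ is absolutely continuous with respect to $\nu\times Q$ with Radon--Nikodym derivative $\frac{d(P\times Q)}{d(\nu\times Q)}(x,y)=\frac{dP}{d\nu}(x)$, a function of $x$ alone; moreover the $y$-marginal of $P\times Q$ is exactly $Q$. Therefore
\[
H^\nu(P\times Q) = -D_{KL}(P\times Q\,|\,\nu\times Q)= -\int \log\!\Big(\tfrac{dP}{d\nu}(x)\Big)\,d(P\times Q)(x,y) = -\int \log\!\Big(\tfrac{dP}{d\nu}\Big)\,dP = -D_{KL}(P\,|\,\nu)= S^\nu(P),
\]
using Fubini (the integrand depends only on $x$ and $Q$ is a probability) and Lemma \ref{kullback}. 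If instead $P$ is not absolutely continuous with respect to $\nu$ one checks directly that both sides equal $-\infty$, but the hypothesis $P\ll\nu$ removes this case.

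The only mild subtlety — and the step I would be most careful about — is the measure-theoretic bookkeeping in part 1: one must confirm that restricting attention to bounded-below competitors (as permitted by Lemma \ref{lemagap} / the remark following the definition of $H^{\hat\nu}$) causes no loss, so that a function $c\in\mathcal F(P)$ which is $\nu$-normalized really does yield a legitimate element of $\mathcal F(\pi)$ when lifted to $X\times Y$; this is immediate once one notes $\int\tilde c\,d\pi=\int c\,dP$ is well defined precisely when $\int c\,dP$ is. Everything else is a direct application of the already-established identities and Fubini's theorem.
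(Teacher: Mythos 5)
Your proof is correct and follows essentially the same route as the paper: part 1 by lifting a $\nu$-normalized competitor $c$ on $X$ to the function $(x,y)\mapsto c(x)$ on $X\times Y$ and using that $\pi$ has $x$-marginal $P$, and part 2 by observing that $\frac{dP}{d\nu}(x)$ serves as the $\nu$-Jacobian (equivalently, Radon--Nikodym density with respect to $\nu\times Q$) of $P\times Q$ and invoking Lemma \ref{kullback}. The only cosmetic difference is that you cite Theorem \ref{theoKL} for the $H^\nu(P\times Q)$ side, whereas the paper cites the constituent Lemma \ref{pijacobian} directly; the underlying computation is identical.
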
	

\begin{proof}
	The proof of item 1. is a direct consequence of the definitions of $S^{\nu}$ and $H^{\nu}$ because we can consider any measurable function $c:X\to\mathbb{R}$, as a measurable function defined on $X\times Y$ which depends just on the first coordinate.
	
	In order to prove item 2. we consider the function $J(x,y) = \frac{dP}{d\nu}(x)$. This function is a $\nu-$Jacobian of $P\times Q$, then, applying   propositions \ref{kullback} and \ref{pijacobian}, we conclude the proof.
\end{proof}

The proof of the next result follows the same reasoning which was used in \cite{LMMS} and \cite{LM}.


\begin{proposition} The entropy $H^{\hat{\nu}}(\cdot)$ has the following properties:\newline
	1. $H^{\hat{\nu}} $ is concave\newline
	2. $H^{\hat{\nu}}$ is upper semi continuous. More precisely, if $\int f d\pi_n \to \int f\,d\pi$, for any measurable and bounded function $f$ on $X\times Y$,  then, $\displaystyle{\limsup_n H^{\hat{\nu}}(\pi_n) \leq H^{\hat{\nu}}(\pi).}$
\end{proposition}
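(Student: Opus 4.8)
The plan is to reduce the supremum defining $H^{\hat\nu}$ to a class of test functions that does not depend on $\pi$, and then treat $-H^{\hat\nu}$ as a supremum of affine functionals. Concretely, by Lemma \ref{lemagap} we may take the supremum over $\hat\nu$-normalized functions $c$ that are bounded below; if $c\geq -K$ then $c+K\geq 0$, so $\int c\,d\pi$ is well defined with value in $[-K,+\infty]$ for \emph{every} probability $\pi$ on $X\times Y$. Writing $\mathcal{C}$ for the class of bounded-below $\hat\nu$-normalized functions, we therefore have $-H^{\hat\nu}(\pi)=\sup_{c\in\mathcal{C}}\int c\,d\pi$, and for each fixed $c\in\mathcal{C}$ the map $\pi\mapsto\int c\,d\pi$ is affine.

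For concavity, let $\pi=t\pi_1+(1-t)\pi_2$ with $t\in[0,1]$. For each $c\in\mathcal{C}$ we have $\int c\,d\pi=t\int c\,d\pi_1+(1-t)\int c\,d\pi_2\leq t\,(-H^{\hat\nu}(\pi_1))+(1-t)\,(-H^{\hat\nu}(\pi_2))$, and taking the supremum over $c\in\mathcal{C}$ of the left-hand side gives $-H^{\hat\nu}(\pi)\leq t\,(-H^{\hat\nu}(\pi_1))+(1-t)\,(-H^{\hat\nu}(\pi_2))$, i.e. $H^{\hat\nu}(\pi)\geq tH^{\hat\nu}(\pi_1)+(1-t)H^{\hat\nu}(\pi_2)$. (Degenerate cases where some $H^{\hat\nu}(\pi_i)=-\infty$ are trivial, since then the right-hand side above is $+\infty$.)

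For upper semicontinuity, assume $\int f\,d\pi_n\to\int f\,d\pi$ for every bounded measurable $f$; it suffices to prove $\liminf_n(-H^{\hat\nu}(\pi_n))\geq -H^{\hat\nu}(\pi)$. Fix $c\in\mathcal{C}$ with $c\geq -K$. For each $N$ the truncation $c\wedge N$ is bounded measurable, so by hypothesis $\int(c\wedge N)\,d\pi_n\to\int(c\wedge N)\,d\pi$; since $\int c\,d\pi_n\geq\int(c\wedge N)\,d\pi_n$ this gives $\liminf_n\int c\,d\pi_n\geq\int(c\wedge N)\,d\pi$. Letting $N\to\infty$ and applying monotone convergence to $(c\wedge N)+K\uparrow c+K\geq 0$ yields $\liminf_n\int c\,d\pi_n\geq\int c\,d\pi$. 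Because $-H^{\hat\nu}(\pi_n)=\sup_{c'\in\mathcal{C}}\int c'\,d\pi_n\geq\int c\,d\pi_n$ for every $n$, we get $\liminf_n(-H^{\hat\nu}(\pi_n))\geq\int c\,d\pi$, and taking the supremum over $c\in\mathcal{C}$ gives $\liminf_n(-H^{\hat\nu}(\pi_n))\geq -H^{\hat\nu}(\pi)$, i.e. $\limsup_n H^{\hat\nu}(\pi_n)\leq H^{\hat\nu}(\pi)$.

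The only genuinely delicate point is the passage through test functions $c$ that are merely bounded below rather than bounded: the convergence hypothesis only controls $\int f\,d\pi_n$ for bounded $f$, so one really needs the truncation-plus-monotone-convergence step, together with the reduction to bounded-below $c$ provided by Lemma \ref{lemagap}. The remainder is a routine manipulation of suprema of affine functionals.
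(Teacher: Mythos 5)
Your proof is correct. The paper itself gives no argument here beyond the remark that ``the proof follows the same reasoning which was used in \cite{LMMS} and \cite{LM},'' so there is no in-text proof to compare against; but your route is the natural one and it is sound: you write $-H^{\hat\nu}$ as a pointwise supremum of the affine functionals $\pi\mapsto\int c\,d\pi$ over the class $\mathcal{C}$ of bounded-below $\hat\nu$-normalized $c$ (the reduction to $\mathcal{C}$ being exactly what Lemma \ref{lemagap} supplies), so that convexity of $-H^{\hat\nu}$ is immediate and lower semicontinuity reduces to lower semicontinuity of each $\pi\mapsto\int c\,d\pi$. The one place that genuinely needs care in this measurable-space setting --- as opposed to the compact/Lipschitz setting of \cite{LMMS}, where the test functions are continuous and the weak$^*$ convergence handles them directly --- is that $c\in\mathcal{C}$ need not be bounded above, while the convergence hypothesis only controls bounded $f$; your truncation $c\wedge N$ combined with monotone convergence applied to $(c\wedge N)+K\uparrow c+K\geq 0$ closes that gap correctly. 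The treatment of the degenerate $H^{\hat\nu}(\pi_i)=-\infty$ cases in the concavity step is also fine, since $H^{\hat\nu}\leq 0$ rules out the problematic $+\infty$ value.
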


\begin{definition}\label{defIG} We define the \textbf{information gain} of a probability $\pi$ on $X\times Y$, with respect to the a priori probability kernel $\hat{\nu}$, as
	\[IG(\pi,\hat{\nu}) = -H^{\hat{\nu}}(\pi).\]
\end{definition}

If $\pi$ has marginals $P$ and $Q$ and $\pi \ll P\times Q$, then choosing $\hat{\nu}\equiv P$ we get
\[IG(\pi, P) = -H^{P}(\pi) = D_{KL}(\pi\,|\,P\times Q),\]
which corresponds to the mutual information given by \eqref{KLmutual}.  As $c=0$ is $\hat{\nu}-$normalized we get $IG(\pi,\hat{\nu})\geq 0$. Furthermore,  $IG (\pi,\hat{\nu})=0$, if $d\pi = \hat{\nu}^{y}(dx)dQ(y)$, for some $Q$.

The information gain $IG(\pi,P)$ above defined can be computed from $S^{\nu}(P)$ and $H^{\nu}(\pi)$, and it ``does not depend'' on the choice of the {\it a priori} probability $\nu$ on $X$ as the following result shows.

\begin{proposition} \label{ma1} Let $P$ be a probability on $X$, $\pi$ be a probability on $X\times Y$, with $x-$marginal $P$ and let $\nu$  be an a priori probability on $X$.  Assume that $S^{\nu}(P)$ and $H^{\nu}(\pi)$ are finite. Then,
	\[ IG(\pi,P)=S^{\nu}(P) - H^{\nu}(\pi)  .\]
\end{proposition}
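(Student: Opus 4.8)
The plan is to reduce everything to Kullback--Leibler divergences using Theorem \ref{theoKL} and then to prove the resulting identity as an instance of the chain rule for $D_{KL}$. By Definition \ref{defIG} and Theorem \ref{theoKL} applied with the probability kernel $\hat{\nu}\equiv P$ (noting that $Q$ is the $y$-marginal of $\pi$ while $P$ is its $x$-marginal, so $\hat{\nu}\,dQ=P\times Q$), we have $IG(\pi,P)=-H^{P}(\pi)=D_{KL}(\pi\,|\,P\times Q)$; similarly $S^{\nu}(P)=-D_{KL}(P\,|\,\nu)$ and $H^{\nu}(\pi)=-D_{KL}(\pi\,|\,\nu\times Q)$. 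Hence the proposition is equivalent to
\[
D_{KL}(\pi\,|\,P\times Q)=D_{KL}(\pi\,|\,\nu\times Q)-D_{KL}(P\,|\,\nu),
\]
under the standing hypothesis that the two divergences on the right are finite.

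Since $S^{\nu}(P)$ and $H^{\nu}(\pi)$ are finite we have $P\ll\nu$ and $\pi\ll\nu\times Q$; write $J_0:=\frac{dP}{d\nu}$, $J:=\frac{d\pi}{d(\nu\times Q)}$, and $X_0:=\{x\in X:J_0(x)>0\}$, so $P(X_0)=1$. Because the $x$-marginal of $\pi$ is $P$, the set $N:=\{(x,y):J_0(x)=0\}$ satisfies $\pi(N)=P(X\setminus X_0)=0$, so $J=0$ holds $(\nu\times Q)$-a.e.\ on $N$, and also $(P\times Q)(N)=\int_N J_0\,d(\nu\times Q)=0$. Using these facts (the same bookkeeping as in Lemmas \ref{kullback} and \ref{pijacobian}) one checks that $\pi\ll P\times Q$ with $\frac{d\pi}{d(P\times Q)}=\frac{J}{J_0}$ $\pi$-almost everywhere; concretely, for measurable $f\ge 0$,
\[
\int f\,d\pi=\int_{X_0\times Y} fJ\,d(\nu\times Q)=\int_{X_0\times Y} f\,\frac{J}{J_0}\,J_0\,d(\nu\times Q)=\int f\,\frac{J}{J_0}\,d(P\times Q).
\]

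Finally I would split the logarithm. Since $t\log t\ge -e^{-1}$, the negative parts $\int(\log J)^{-}\,d\pi$ and $\int(\log J_0)^{-}\,dP$ are automatically finite, and combining this with the finiteness of $D_{KL}(\pi\,|\,\nu\times Q)=\int\log J\,d\pi$ and $D_{KL}(P\,|\,\nu)=\int\log J_0\,dP$ gives $\log J\in L^{1}(\pi)$ and $\log J_0\in L^{1}(P)\subseteq L^{1}(\pi)$ (the latter inclusion because $J_0$ depends only on $x$ and $P$ is the $x$-marginal of $\pi$, so $\int\log J_0\,d\pi=\int\log J_0\,dP$). Therefore
\[
D_{KL}(\pi\,|\,P\times Q)=\int\log\frac{J}{J_0}\,d\pi=\int\log J\,d\pi-\int\log J_0\,d\pi=D_{KL}(\pi\,|\,\nu\times Q)-D_{KL}(P\,|\,\nu),
\]
and unwinding the signs gives $IG(\pi,P)=S^{\nu}(P)-H^{\nu}(\pi)$. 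The only delicate point is the measure-theoretic bookkeeping on the null set $N$ where $J_0$ vanishes, which is what legitimizes writing $\frac{d\pi}{d(P\times Q)}=J/J_0$ and guarantees the splitting of the integral involves no $\infty-\infty$ indeterminacy; everything else is immediate from the definitions and Theorem \ref{theoKL}.
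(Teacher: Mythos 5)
Your proposal is correct and follows essentially the same route as the paper: both proofs reduce the claim to the identity $\frac{d\pi}{d(P\times Q)} = \frac{d\pi/d(\nu\times Q)}{dP/d\nu}$ (the paper's $J/\phi$, your $J/J_0$) and then split the logarithm, using that the denominator depends only on $x$ and that $P$ is the $x$-marginal of $\pi$ to convert $\int\log J_0\,d\pi$ into $\int\log J_0\,dP$. The only difference is cosmetic plus a bit of added rigor: you phrase it through the $D_{KL}$ chain rule and spell out the integrability bookkeeping (the $t\log t\ge -e^{-1}$ bound and the treatment of the null set where $J_0=0$) that the paper leaves implicit.
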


\begin{proof}
By hypothesis there exists  $\phi:X\to[0,+\infty)$, a $\nu-$Jacobian of $P$ and $J:X\times Y \to [0,+\infty)$, which is a $\nu-$Jacobian of $\pi$. Denoting by $Q$ the $y-$marginal of $\pi$, we have $d\pi(x,y) = J(x,y) d\nu(x)dQ(y)$ and $dP(x) = \phi(x)d\nu(x)$. The set $A =\{x\in X\,|\,\phi(x)>0\}$ satisfies  $P(A)=1$ and, as $\pi$ has $x-$marginal $P$, we finally get $\pi(A\times Y) = 1$. So we can write
$ d\pi(x,y)  = \frac{J(x,y)}{\phi(x)}dP(x)dQ(y)$ 	and therefore
\[ 	S^{\nu}(P) - H^{\nu}(\pi) = - \int \log(\phi)dP + \int \log(J)d\pi  =-H^{P}(\pi) =  IG(\pi,P) .
\]

\end{proof}

\begin{proposition} Let  $\hat{\nu}$ be an a priori probability kernel and $\pi$ be a probability on $X\times Y$. Given a bounded and $\hat{\nu}-$normalized function $\phi_0:X\times Y\to\mathbb{R}$, consider the a priori probability kernel $\hat{\mu}^y(dx) = e^{\phi_0(x,y)}\hat{\nu}^y(dx)$. Then,
	\[IG(\pi, \hat{\mu}) = -\int \phi_0\,d\pi + IG(\pi, \hat{\nu}).\]
\end{proposition}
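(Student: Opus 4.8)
The plan is to reduce the statement to the definition of $H^{\hat\mu}$ and exploit the fact that the map $c \mapsto c - \phi_0$ is a bijection between $\hat\mu$-normalized functions and $\hat\nu$-normalized functions, because $\phi_0$ is bounded. First I would recall that by Definition \ref{defIG},
\[
IG(\pi,\hat\mu) = -H^{\hat\mu}(\pi) = \sup\left\{\int c(x,y)\,d\pi(x,y)\;\Big|\; \int e^{c(x,y)}\hat\mu^y(dx) = 1\ \forall y,\ c\in\mathcal{F}(\pi)\right\}.
\]
Since $\hat\mu^y(dx) = e^{\phi_0(x,y)}\hat\nu^y(dx)$, for any measurable $c$ we have $\int e^{c(x,y)}\hat\mu^y(dx) = \int e^{c(x,y)+\phi_0(x,y)}\hat\nu^y(dx)$; hence $c$ is $\hat\mu$-normalized if and only if $\tilde c := c + \phi_0$ is $\hat\nu$-normalized.

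Next I would check that the substitution $c = \tilde c - \phi_0$ sets up a bijection between the two feasible sets that also respects membership in $\mathcal{F}(\pi)$. Because $\phi_0$ is bounded, $\int \phi_0\,d\pi$ is a finite real number, and $\int c\,d\pi$ is well defined (not $+\infty-\infty$) if and only if $\int \tilde c\,d\pi = \int(c+\phi_0)\,d\pi$ is well defined; moreover in that case $\int c\,d\pi = \int \tilde c\,d\pi - \int \phi_0\,d\pi$. Therefore
\[
\sup\left\{\int c\,d\pi \;\Big|\; c\ \hat\mu\text{-normalized},\ c\in\mathcal{F}(\pi)\right\}
= \sup\left\{\int \tilde c\,d\pi - \int \phi_0\,d\pi \;\Big|\; \tilde c\ \hat\nu\text{-normalized},\ \tilde c\in\mathcal{F}(\pi)\right\},
\]
and pulling the constant $-\int\phi_0\,d\pi$ out of the supremum gives exactly $-\int\phi_0\,d\pi + IG(\pi,\hat\nu)$, which is the claimed identity.

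The only genuine point requiring care — and the step I expect to be the main (minor) obstacle — is the bookkeeping around $\mathcal{F}(\pi)$ and possibly infinite suprema: one must make sure that boundedness of $\phi_0$ really does transfer the "well-defined integral" property back and forth, and that the equality $\int c\,d\pi = \int\tilde c\,d\pi - \int\phi_0\,d\pi$ is valid even when both sides are $+\infty$ (here one uses that $\int\phi_0\,d\pi$ is finite, so adding or subtracting it cannot create an indeterminate form). One should also note that $\hat\mu$ is indeed a probability kernel, which is already recorded in the Example following the definition of $\hat\nu$-normalized functions, so the expression $IG(\pi,\hat\mu)$ makes sense. With these observations the computation above is complete; no appeal to Theorem \ref{theoKL} or to absolute continuity is needed, since everything is done at the level of the variational formula.
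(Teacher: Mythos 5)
Your proof is correct and follows essentially the same change-of-variables argument ($c \mapsto c+\phi_0$ inside the supremum) that the paper uses. The extra care you take about boundedness of $\phi_0$ preserving membership in $\mathcal{F}(\pi)$ and avoiding indeterminate forms is a welcome but minor tightening of the paper's slightly terser computation.
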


\begin{proof}
	\[IG(\pi, \hat{\mu}) = -H^{\hat{\mu}}(\pi) = \sup \{\int c\,d\pi\,|\, \int e^{c(x,y)}\,\hat{\mu}^y(dx) = 1 \,\forall y\}\]
	\[= \sup \{\int c\,d\pi\,|\, \int e^{c+\phi_0}\,\hat{\nu}^y(dx) = 1 \,\forall y\}\]
	\[=\sup \{\int c-\phi_0\,d\pi\,|\, \int e^{c}\,d\hat{\nu}^y(dx) = 1 \,\forall y\}\]
	\[=-\int \phi_0\,d\pi - H^{\hat{\nu}}(\pi).\]
\end{proof}

\begin{corollary} \label{IGequivalents} Let  ${\nu}$ be an a priori probability on $X$ and $\pi$ be a probability on $X\times Y$. Given a bounded and ${\nu}-$normalized function $\phi_0:X\times Y\to\mathbb{R}$, consider the a priori probability kernel $\hat{\mu}^y(dx) = e^{\phi_0(x,y)}d\nu(x)$. Then,
	\[IG(\pi, \hat{\mu}) = -\int \phi_0\,d\pi - H^{\nu}(\pi).\]
\end{corollary}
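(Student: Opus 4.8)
The plan is to obtain this as an immediate specialization of the preceding Proposition. First I would take $\hat{\nu}\equiv \nu$, i.e.\ the a priori probability kernel given by $\hat{\nu}^y(dx) = \nu(dx)$ for every $y$. With this choice the hypothesis that $\phi_0$ is $\nu$-normalized, namely $\int e^{\phi_0(x,y)}\,d\nu(x) = 1$ for all $y$, is exactly the statement that $\phi_0$ is $\hat{\nu}$-normalized, and the a priori probability kernel $\hat{\mu}^y(dx) = e^{\phi_0(x,y)}\,d\nu(x)$ of the Corollary coincides with the kernel $\hat{\mu}^y(dx) = e^{\phi_0(x,y)}\hat{\nu}^y(dx)$ appearing in the Proposition. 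So all the hypotheses of the Proposition are satisfied (recall $\phi_0$ is assumed bounded).

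Next I would apply the Proposition to get $IG(\pi,\hat{\mu}) = -\int \phi_0\,d\pi + IG(\pi,\hat{\nu})$. Finally, by Definition \ref{defIG} together with the identification fixed in the paragraph introducing a priori probability kernels (that $\hat{\nu}\equiv\nu$ forces $H^{\hat{\nu}}(\pi) = H^{\nu}(\pi)$), we have $IG(\pi,\hat{\nu}) = -H^{\hat{\nu}}(\pi) = -H^{\nu}(\pi)$. Substituting this into the previous display gives $IG(\pi,\hat{\mu}) = -\int \phi_0\,d\pi - H^{\nu}(\pi)$, which is the asserted identity. I do not expect any genuine obstacle: the only point worth a sentence of care is the bookkeeping of the two notational conventions ($\hat{\nu}\equiv\nu$ and $H^{\hat{\nu}} = H^{\nu}$), and the observation that the normalization condition on $\phi_0$ transfers verbatim between the kernel and the a priori probability formulations.
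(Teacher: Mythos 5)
Your proof is correct and is exactly the intended derivation: specialize the preceding Proposition to the constant kernel $\hat{\nu}\equiv\nu$, note that $\phi_0$ being $\nu$-normalized is the same as being $\hat{\nu}$-normalized, and use $IG(\pi,\hat{\nu})=-H^{\hat{\nu}}(\pi)=-H^{\nu}(\pi)$. The paper states this as a Corollary without separate proof for precisely this reason.
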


 This last result shows that the above definition of information gain, using probability kernels, is a natural extension of \eqref{IG2} (see also Remark \ref{remarkidea}). Given a probability $Q_0$ on $Y$ we can associate a probability $\pi_0$ on $X\times Y$ given by $d\pi_0 = e^{\phi_0(x,y)}d\nu(x) dQ_0(y)$. A natural generalization of  \eqref{IG2}, in principle, could be given by
\begin{equation}\label{IGnotwell} IG(\pi,\pi_0) =   -\int \phi_0\,d\pi - H^{\nu}(\pi),\end{equation}
but we remark that in some cases it is not even well defined. Indeed, if $Y=\{0,1\}$  and $\pi_0 = \nu\times \delta_0$, the functions $\phi_0$ and $\psi_0$ given by $\phi_0(x,y)= 0$ and $\psi_0(x,y)=y\cdot f(x)$, where $f\neq 0$ is a $\nu$-normalized function,  provide two different disintegrations of $\pi_0$, which are
\[d\pi_0(x,y) = e^{\phi_0(x,y)} d\nu(x)d\delta_0(y) \,\,\,\,\text{ and}\,\,\,\, d\pi_0(x,y) = e^{\psi_0(x,y)} d\nu(x)d\delta_0(y).\]
If $\pi=\nu\times \delta_{1}$, then $H^{\nu}(\pi) =H^{\nu}(\nu\times\delta_1)= S^{\nu}(\nu) = 0$, and so
\[-\int \psi_0\,d\pi - H^{\nu}(\pi) = -\int \psi_0\,d\pi = \int f(x)\,d\nu(x)\] while
\[-\int \phi_0\,d\pi - H^{\nu}(\pi) = 0.\]

The problem concerning the extension of \eqref{IG2} for measurable spaces can be  solved by using probability kernels. We observe that the right-hand side of \eqref{IGnotwell} contains just  $\pi,\phi_0$ and $\nu$. Therefore, we realize that we are not using $Q_0$ and the associated $\pi_0$, but only the probability kernel $\hat{\mu}=e^{\phi_0}d\nu$, which is part of a disintegration of $\pi_0$. In this sense, it is natural to define information gain by using probability kernels and to consider $IG(\pi, \hat{\mu})$ instead of trying to define $IG(\pi,\pi_0)$.

\section{Entropy and information gain for compact metric spaces }\label{seccompact}

In this section, we consider compact metric spaces   $X$ and $Y$ equipped  with their respective Borel $\sigma$-algebras. We will prove that, in this case, $H^{\nu}$ coincides with the entropy considered in \cite{MO}, which is defined from a supremum taken over Lipschitz functions instead of measurable functions. This shows that the concept of entropy, as defined in \cite{LM} for metric spaces, which is also considered here for measurable spaces, extends the concept of entropy as described in \cite{LMMS} and \cite{MO}. This also shows that such entropies are given by simple expressions concerning $D_{KL}$.   In the  second part of this section, we also propose to rewrite the information gain by using ``special'' probabilities $\pi_0$ instead of  probability kernels $\hat{\nu}$ (see the end of the previous section). This will be coherent with the reasoning  of future sections and also extends \eqref{eq3}, \eqref{eq3new} and \eqref{IG2} from finite sets (finite alphabet) to compact spaces.

\begin{theorem}\label{entropylip} Suppose that $X$ and $Y$ are compact metric spaces and consider the Borel sigma-algebras in $X$ and $Y$. Then,
	\[H^{\nu}(\pi)= -\sup \{ \int f(x,y)\,d\pi(x,y) | \int e^{f(x,y)}\,d\nu(x) = 1,\,\forall y,\, \, \text{with}\, f\, \, \text{Lipschitz}\}.\]
\end{theorem}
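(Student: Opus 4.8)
The plan is to show that the supremum over Lipschitz $\nu$-normalized functions equals the supremum over all measurable $\nu$-normalized functions with well-defined $\pi$-integral, which by definition (and Lemma \ref{lemagap}) is $-H^\nu(\pi)$. One inequality is immediate: every Lipschitz function is measurable, and if it is $\nu$-normalized it belongs to the admissible class in the definition of $H^\nu(\pi)$, so the Lipschitz supremum is at most $-H^\nu(\pi)$. The work is entirely in the reverse inequality, i.e. approximating an arbitrary competitor by Lipschitz ones without losing too much in the integral against $\pi$ and while preserving (or restoring) the normalization $\int e^{f(x,y)}\,d\nu(x)=1$ for every $y$.

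First I would reduce, via Lemma \ref{lemagap}, to working with $\nu$-Jacobians $J$ that are bounded below by a positive constant $\varepsilon$; equivalently, with $\nu$-normalized functions $c=\log J$ that are bounded below. If $H^\nu(\pi)=-\infty$ there is nothing to prove, so assume it is finite, fix such a $c$ with $\int c\,d\pi$ close to $-H^\nu(\pi)$, and note $\int e^c\,d\nu=1$ for all $y$ — but here $c$ depends on $(x,y)$, so this is a family of constraints. The natural move is a two-step approximation. Step one: approximate $c$ in $L^1(\pi)$ (from below, to be safe with $+\infty$ values, truncating $c$ from above at level $N$ loses nothing by monotone convergence and keeps $\int e^c\,d\nu\le 1$) by a continuous function; since $X\times Y$ is a compact metric space, continuous functions are dense in $L^1(\pi)$ and can in turn be uniformly approximated by Lipschitz functions (Stone–Weierstrass, or direct inf-convolution $c^\lambda(z)=\inf_w [c(w)+\lambda d(z,w)]$). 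Step two: repair the normalization. Given a Lipschitz $g$ with $g$ close to $c$ in $L^1(\pi)$ and $\|g-c\|_\infty$ small on a large-$\pi$-measure set, the function $y\mapsto \log\int e^{g(x,y)}\,d\nu(x)$ is itself Lipschitz in $y$ (differentiate under the integral, or estimate directly using $|e^a-e^b|\le e^{\max(a,b)}|a-b|$ together with boundedness of $g$), so replacing $g(x,y)$ by $\tilde g(x,y):=g(x,y)-\log\int e^{g(x',y)}\,d\nu(x')$ gives a Lipschitz $\nu$-normalized function, and $\int \tilde g\,d\pi = \int g\,d\pi - \int \log\int e^{g(x',y)}d\nu(x')\,d\pi(x,y)$, where the correction term is small because $\int e^{g(\cdot,y)}d\nu$ is close to $\int e^{c(\cdot,y)}d\nu=1$ uniformly in $y$. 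Hence $\int \tilde g\,d\pi \to \int c\,d\pi$, giving the reverse inequality.

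The main obstacle is the interplay between the $L^1(\pi)$-approximation and the uniform-in-$y$ normalization constraint: a function can be close in $L^1(\pi)$ yet badly behaved on $\nu$-null-but-not-$\pi$-null sets, or on fibers $\{y\}\times X$ that carry little $\pi$-mass, so that $\int e^{g(\cdot,y)}d\nu$ is far from $1$ for those $y$ and the normalizing subtraction $\log\int e^{g(\cdot,y)}d\nu$ is large there — but, crucially, large precisely where $\pi$ is small, so one must quantify that the correction term $\int \log\int e^{g(x',y)}d\nu(x')\,d\pi(x,y)$ stays small. The clean way around this is to keep the pre-approximation competitor $c$ bounded (truncate both above and below, using $H^\nu(\pi)$ finite so the lower truncation costs nothing in the limit and the upper truncation costs nothing by monotone convergence applied to $\int c\,d\pi$), so that $e^{c}$ is bounded and $\int e^{c}d\nu$ stays uniformly bounded; then uniform approximation $\|g-c\|_\infty<\delta$ yields $e^{-\delta}\le \int e^{g(\cdot,y)}d\nu\le e^{\delta}$ for all $y$, so $|\log\int e^{g(\cdot,y)}d\nu|\le\delta$ uniformly, and the correction term is bounded by $\delta$ outright. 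With $c$ bounded, pure uniform approximation by Lipschitz functions suffices and the $L^1(\pi)$-subtlety disappears. I would therefore organize the proof as: (i) reduce to bounded $\nu$-normalized competitors via Lemma \ref{lemagap} and truncation; (ii) uniformly approximate by Lipschitz $g$; (iii) normalize $g$ to $\tilde g$, checking $\tilde g$ is Lipschitz and that $\int\tilde g\,d\pi$ is within $\delta$ of $\int c\,d\pi$; (iv) let $\delta\to 0$.
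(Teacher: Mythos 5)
There is a genuine gap at the crux of your argument, namely the sentence ``With $c$ bounded, pure uniform approximation by Lipschitz functions suffices and the $L^1(\pi)$-subtlety disappears.'' A bounded \emph{measurable} function on a compact metric space cannot in general be uniformly approximated by Lipschitz (or even continuous) functions: uniform limits of continuous functions are continuous, and $c$ need not be. Truncation makes $c$ bounded but does not make it continuous. So the step you rely on --- getting $\|g-c\|_\infty<\delta$ with $g$ Lipschitz --- is simply unavailable, and consequently the clean bound $|\log\int e^{g(\cdot,y)}\,d\nu|\le\delta$ uniformly in $y$ does not follow. If you instead fall back on the genuinely available approximation (Lusin plus Tietze, giving a continuous $g$ with $\|g\|_\infty\le\|c\|_\infty=:M$ and $\pi(\{g\ne c\})<\epsilon$), then the renormalization correction $\int\log\bigl(\int e^{g(x',y)}\,d\nu(x')\bigr)\,d\pi(x,y)$ is a priori only bounded by $M$, not by something small: smallness of $\pi(\{g\ne c\})$ does not translate into smallness of $\nu(\{x: g(x,y)\ne c(x,y)\})$ for $Q$-most $y$, because $\pi$ need not dominate $\nu\times Q$ on fibers with any uniform bound on the density. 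You identify this very difficulty in your ``obstacle'' paragraph, but your proposed resolution (truncate, then uniformly approximate) does not actually resolve it.

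The paper takes a completely different, ``soft'' route that sidesteps pointwise approximation altogether: it defines $h^\nu$ as the Lipschitz supremum, notes that $-H^\nu$ is convex and weak*-lower semicontinuous with a closed convex epigraph, and argues by contradiction. If $-H^\nu(\eta)>-h^\nu(\eta)$ for some probability $\eta$, then $(\eta,-h^\nu(\eta))$ lies strictly outside $\operatorname{epi}(-H^\nu)$, and Hahn--Banach separation gives a continuous (hence approximately Lipschitz) function $\varphi$ that separates; after replacing $\varphi$ by its fiberwise normalization $\psi(x,y)=\varphi(x,y)-\log\int e^{\varphi(x',y)}\,d\nu(x')$ (this \emph{is} Lipschitz when $\varphi$ is, unlike for merely measurable $c$) and building the probability $\pi_\psi$ with kernel $e^\psi\,d\nu$ and $y$-marginal equal to that of $\eta$, one reaches a contradiction with the definition of $h^\nu$. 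If you want a constructive/approximation-based proof instead of the duality argument, you would need an additional idea to control the renormalization term without uniform closeness --- for instance a quantitative link between $\pi$ and $\nu\times Q$, which is exactly the kind of structure the duality argument does not require.
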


\begin{proof} By definition
	\[H^{\nu}(\pi)= -\sup \{ \int f(x,y)\,d\pi(x,y) | \int e^{f(x,y)}d\nu(x) = 1\,\forall y,\, f\,\in \mathcal{F}(\pi)\}.\]
	We denote
	\[h^{\nu}(\pi):= -\sup \{ \int f(x,y)\,d\pi(x,y) \,|\, \int e^{f(x,y)} d\nu(x) = 1,\,\forall y,\, \text{with}\,f\, \text{Lipschitz}\}.\]

It will be  necessary to prove that $H^{\nu}=h^{\nu}$.
	
	\bigskip
	
If $\psi:X\times Y \to\mathbb{R}$ is Lipschitz and $\nu-$normalized, $ Q$ is any probability on $Y$ and $\pi_{\psi}$ is the probability on $X\times Y$, given by
 \[ \int f(x,y)\, d\pi_\psi(x,y) := \iint f(x,y)e^{\psi(x,y)}\,d{\nu}(x)dQ(y),\,\text{for}\,\,f \,\,\text{measurable},\]
 then, $e^{\psi}$ is a $\nu$-Jacobian of $\pi_\psi$. It follows in this case, that
 \[H^{\nu}(\pi_\psi)=   -\int \psi(x,y)\,d\pi_\psi(x,y) = h^{\nu}(\pi_\psi). \]

Suppose by contradiction there exists a probability $\eta$ on $X\times Y$, such that, $-H^{\nu}(\eta)>-h^{\nu}(\eta)$. Consequently, $-h^{\nu}(\eta)\neq +\infty$.

First, we claim that there exists a Lipschitz function $\varphi:X\times Y\to\mathbb{R}$, such that, for any probability $\pi$ on $X\times Y$,
\[ \int \varphi \, d\eta +h^{\nu}(\eta) > \int \varphi \, d\pi +H^{\nu}(\pi).\]

The proof of this claim follows the same reasoning  of the proof of Theorem 3 in \cite{ACR} (see also  \cite{ET} chap. 1).
We consider the weak* topology on the space of finite signed-measures and we extend $H^{\nu}$ and $h^{\nu}$ as the value $-\infty$, if $\pi$ is not a probability.

As $-H^{\nu}$ is convex, non negative and lower semi-continuous, its epigraph $\operatorname{epi}(-H^{\nu})=\{(\pi,t) \,|\, -H^{\nu}(\pi) \leq t\}$ is convex and closed. As $(\eta,-h^{\nu}(\eta))\notin \operatorname{epi}(-H^{\nu})$, it follows from Hahn-Banach theorem that there is  $c\in\mathbb{R}$ and a linear functional
\[(\pi,t) \to \int g\,d\pi + at,\]
where $g$ is a fixed continuous function and $a\in \mathbb{R}$ is fixed, such that, for any $(\pi,t) \in \operatorname{epi}(-H^{\nu})$ we have
\[\int g \,d\eta - ah^{\nu}(\eta) <c<\int g \,d\pi + at.\]
Observe that necessarily $a>0$. We denote $\varphi = -\frac{g}{a}$. If a probability $\pi$ satisfies $-H^{\nu}(\pi)<+\infty$, then $(\pi,-H^{\nu}(\pi)) \in \operatorname{epi}(-H^{\nu})$ and finally  we get
\[\int \varphi \,d\eta +h^{\nu}(\eta) >-\frac{c}{a}>\int \varphi \,d\pi  +H^{\nu}(\pi).\]
Even in the case  $-H^{\nu}(\pi)=+\infty$ these inequalities remain valid.

Finally, as the set of Lipschitz functions is dense in the set of continuous functions (in the uniform convergence) we can assume that for a Lipschitz function $\varphi$ we have
 \[ \int \varphi \, d\eta +h^{\nu}(\eta) > \int \varphi \, d\pi +H^{\nu}(\pi),\]for any probability $\pi$. This finishes the proof of the claim.

\bigskip

Let $Q$ be the $y$-marginal of $\eta$ and $\tilde{\varphi}(y) = \log(\int e^{\varphi(x,y)}d\nu(x))$. The function $\psi(x,y) = \varphi(x,y) - \tilde{\varphi}(y)$ is Lipschitz, $\nu-$normalized and for any probability $\pi$ on $X\times Y$, with $y-$marginal $Q$, we have
\[ \int \psi \, d\eta +h^{\nu}(\eta)> \int \psi \, d\pi +H^{\nu}(\pi).\]

Let $\pi = \pi_{\psi}$ be defined by
 \[ \int f(x,y)\, d\pi_\psi(x,y) := \iint f(x,y) e^{\psi(x,y)}\,d\nu(x)dQ(y).\]
Then, as $\pi_\psi$ has $y-$ marginal $Q$ and $H^{\nu}(\pi_\psi)=-\int \psi \, d\pi_\psi$, we get that
\[  \int \psi \, d\eta + h^{\nu}(\eta) >  \int \psi \, d\pi_\psi +H^{\nu}(\pi_\psi) = 0 . \]
This is a contradiction because, by definition of $h^{\nu}$, $ \int \psi \, d\eta + h^{\nu}(\eta)\leq 0$.
\end{proof}

The next results will be necessary later; in the direction of getting a different point of view for the concept of  information gain.

\begin{proposition} \label{equivalent}
Let $X$ and $Y$ be compact metric spaces and suppose that  there exists a continuous/Lipschitz function $\phi:X\times Y \to \mathbb{R}$, such that, $J=e^{\phi}$ is a $\nu$-Jacobian of $\pi$. Let $P$ and $Q$ be the marginals of $\pi$ on $X$ and $Y$. Then, $P$ is equivalent to $\nu$ (each one is absolutely continuous with respect to each other) and
	$$\frac{d P}{d\nu}(x) =\int e^{{\phi}(x,y)}dQ(y),$$ which is also continuous/Lipschitz. There exist constants $c_2>c_1>0$, such that, $c_1<\frac{d P}{d\nu}<c_2$,\, $\forall x\in X$. Finally, defining $\psi := \phi- \log(\frac{d P}{d\nu})$, we have that $e^\psi$ is a $P-$Jacobian of $\pi$.
\end{proposition}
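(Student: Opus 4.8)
The plan is to read off the Radon--Nikodym derivative of $P$ directly from the disintegration of $\pi$ and then verify the remaining claims by elementary estimates using compactness. Recall that the hypothesis ``$J=e^{\phi}$ is a $\nu$-Jacobian of $\pi$'' means two things: $\int e^{\phi(x,y)}\,d\nu(x)=1$ for every $y\in Y$, and $d\pi(x,y)=e^{\phi(x,y)}\,d\nu(x)\,dQ(y)$, where $Q$ is the $y$-marginal of $\pi$ (this is \eqref{eqpijacobi} with $\hat{\nu}\equiv\nu$, as recorded after Lemma \ref{pijacobian}). Since $\phi$ is continuous on the compact space $X\times Y$, there are constants $m\le M$ with $m\le\phi\le M$, so $e^{\phi}$ is bounded and bounded away from $0$, and Fubini's theorem applies to $e^{\phi}$ against $\nu\times Q$.

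First I would compute $\frac{dP}{d\nu}$. For any bounded measurable $g:X\to\mathbb{R}$,
\[
\int g\,dP=\int g(x)\,d\pi(x,y)=\iint g(x)e^{\phi(x,y)}\,d\nu(x)\,dQ(y)=\int g(x)\Big(\int e^{\phi(x,y)}\,dQ(y)\Big)d\nu(x),
\]
the last step by Fubini. Hence $P\ll\nu$ and $\rho(x):=\int e^{\phi(x,y)}\,dQ(y)$ is a representative of $\frac{dP}{d\nu}$. As $Q$ is a probability, $e^{m}\le\rho\le e^{M}$; choosing $c_1:=e^{m}/2$ and $c_2:=2e^{M}$ gives $c_1<\frac{dP}{d\nu}<c_2$. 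Because $\rho$ is bounded below by a positive constant, $\nu\ll P$ as well, so $P$ and $\nu$ are equivalent. For regularity of $\rho$ I would use $|e^{a}-e^{b}|\le e^{M}|a-b|$ for $a,b\le M$, giving $|\rho(x)-\rho(x')|\le e^{M}\int|\phi(x,y)-\phi(x',y)|\,dQ(y)$; uniform continuity of $\phi$ on the compact $X\times Y$ then yields continuity of $\rho$, and if $\phi$ is Lipschitz this bound shows $\rho$ is Lipschitz with constant at most $e^{M}\operatorname{Lip}(\phi)$. In either case $\psi:=\phi-\log\rho$ is well defined (since $\rho\ge e^{m}>0$) and is continuous/Lipschitz, because $\log$ is Lipschitz on $[e^{m},e^{M}]$.

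Finally I would check that $e^{\psi}$ is a $P$-Jacobian of $\pi$. For each $y$, using $dP=\rho\,d\nu$,
\[
\int e^{\psi(x,y)}\,dP(x)=\int e^{\phi(x,y)}\rho(x)^{-1}\,dP(x)=\int e^{\phi(x,y)}\,d\nu(x)=1,
\]
which is the normalization condition. Moreover $e^{\psi(x,y)}\,dP(x)=e^{\phi(x,y)}\rho(x)^{-1}\rho(x)\,d\nu(x)=e^{\phi(x,y)}\,d\nu(x)$, so for any measurable $f$,
\[
\iint f(x,y)e^{\psi(x,y)}\,dP(x)\,dQ(y)=\iint f(x,y)e^{\phi(x,y)}\,d\nu(x)\,dQ(y)=\int f(x,y)\,d\pi(x,y),
\]
which is exactly \eqref{eqpijacobi} for $e^{\psi}$ with the a priori probability $P$. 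Hence $e^{\psi}$ is a $P$-Jacobian of $\pi$.

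I do not expect a serious obstacle: the only delicate points are the legitimacy of Fubini (which holds because $e^{\phi}$ is bounded on the compact $X\times Y$) and the fact that, although $\frac{dP}{d\nu}$ is a priori defined only $\nu$-a.e., here it admits the canonical continuous representative $\rho$, which must be used consistently when forming $\log\rho$ and $\psi$.
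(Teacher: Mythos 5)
Your proof is correct and follows essentially the same route as the paper's: read off $\frac{dP}{d\nu}=\int e^{\phi(\cdot,y)}\,dQ(y)$ by Fubini, get uniform two-sided bounds and continuity/Lipschitz regularity from compactness, and then verify the $P$-Jacobian identity by substituting $dP=\rho\,d\nu$. You merely flesh out the estimates that the paper dismisses with ``clearly'' (the explicit bounds $e^{m}\le\rho\le e^{M}$, the inequality $|e^{a}-e^{b}|\le e^{M}|a-b|$, and Lipschitzness of $\log$ on $[e^{m},e^{M}]$) and additionally spell out the normalization $\int e^{\psi(x,y)}\,dP(x)=1$, which the paper leaves implicit in the disintegration check.
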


\begin{proof}
	If a measurable and bounded function $g$ depends only of the first coordinate, then, using Fubini's theorem, we get
\begin{align*} \int g(x) d P(x) &= \int g(x) d\pi(x,y) = \iint e^{{\phi}(x,y)}g(x)\,d\nu(x)dQ(y)\\& = \int[\int  e^{{\phi}(x,y)}dQ(y)] g(x)\,d\nu(x).\end{align*}
	It follows that  $\frac{d P}{d\nu}(x) =\int e^{{\phi(x,y)}}dQ(y)$ is a density.
		Clearly the function $\frac{d P}{d\nu}$ is continuous/Lipschitz and there are constants $c_2>c_1>0$, such that, $c_1<\frac{d P}{d\nu}(x)<c_2$,\, $\forall x\in X$. This shows that $P$ and $\nu$ are equivalent and that $\log(\frac{d P}{d\nu})$ is continuous/Lipschitz.
	
	Let $\psi(x,y):=\phi(x,y)-\log(\frac{d P}{d\nu})(x)$. Then, for any measurable and bounded function $g:X\times Y \to\mathbb{R}$  we have
	\[\int e^{\psi(x,y)}g(x,y)\,d P(x)= \int e^{\phi(x,y)} g(x,y) {d\nu}(x) .\]
By integrating both sides with respect to $Q$ and using the fact that $e^{\phi}$ is a $\nu-$Jacobian of $\pi$, we get
\[\int[\int e^{\psi(x,y)}g(x,y))\,d P(x)]dQ(y) = \int g(x,y)d\pi(x,y).\]
		This shows that $\psi$ is a $P-$Jacobian of $\pi$.
\end{proof}

\begin{proposition}\label{uniqueJlip}  Suppose that $X$ and $Y$ are compact metric spaces. Let $\nu$ be a probability on $X$ with $\operatorname{supp}(\nu) = X$ and $\phi:X\times Y\to\mathbb{R}$ be a continuous function. Let $\pi_0$ be a probability on $X\times Y$ with $\nu$-Jacobian $J=e^{\phi}$.  If $\pi_0$ is positive on open sets of $X\times Y$, then $\phi$ is the unique continuous function, such that, $e^{\phi}$ is a $\nu-$Jacobian of $\pi_0$.
\end{proposition}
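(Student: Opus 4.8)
The plan is to show that if $e^{\phi_1}$ and $e^{\phi_2}$ are both $\nu$-Jacobians of $\pi_0$ with $\phi_1,\phi_2$ continuous, then $\phi_1 = \phi_2$ everywhere on $X\times Y$. The key structural fact to exploit is that $\pi_0$ admits a disintegration along the horizontal lines: writing $Q_0$ for the $y$-marginal of $\pi_0$, both $e^{\phi_1(x,y)}\,d\nu(x)\,dQ_0(y)$ and $e^{\phi_2(x,y)}\,d\nu(x)\,dQ_0(y)$ equal $d\pi_0(x,y)$. Since disintegrations along a measurable partition are $Q_0$-a.e.\ unique, for $Q_0$-almost every $y$ we have $e^{\phi_1(\cdot,y)}\,d\nu = e^{\phi_2(\cdot,y)}\,d\nu$ as measures on $X$, hence $\phi_1(x,y) = \phi_2(x,y)$ for $\nu$-a.e.\ $x$; and because $\operatorname{supp}(\nu)=X$ and $\phi_1(\cdot,y),\phi_2(\cdot,y)$ are continuous, this forces $\phi_1(x,y) = \phi_2(x,y)$ for \emph{every} $x\in X$, for those $Q_0$-a.e.\ $y$.

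The remaining step is to upgrade "$Q_0$-a.e.\ $y$" to "every $y\in Y$", and this is where the hypothesis that $\pi_0$ is positive on open sets enters. First I would argue that $Q_0$ itself has full support: if $V\subset Y$ is open and nonempty, then $X\times V$ is open and nonempty in $X\times Y$, so $\pi_0(X\times V)>0$, and since $\pi_0(X\times V) = Q_0(V)$ we get $Q_0(V)>0$; hence $\operatorname{supp}(Q_0)=Y$. Therefore the set $D := \{y\in Y \mid \phi_1(x,y)=\phi_2(x,y)\ \forall x\in X\}$ is $Q_0$-conull and, being the complement of a $Q_0$-null set in a space where $Q_0$ has full support, $D$ is dense in $Y$. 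Now fix any $y_0\in Y$ and any $x_0\in X$; choosing a sequence $y_n\in D$ with $y_n\to y_0$, joint continuity of $\phi_1$ and $\phi_2$ gives $\phi_1(x_0,y_0) = \lim_n \phi_1(x_0,y_n) = \lim_n \phi_2(x_0,y_n) = \phi_2(x_0,y_0)$. Since $(x_0,y_0)$ was arbitrary, $\phi_1 = \phi_2$ on all of $X\times Y$.

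I expect the main obstacle to be the measure-theoretic bookkeeping in the first step: justifying that the equality of the two disintegrations holds for $Q_0$-a.e.\ $y$ \emph{as measures on $X$} (not merely that the integrals against each fixed test function agree), which requires a countable generating family for the Borel $\sigma$-algebra of the compact metric space $X$ together with the standard uniqueness-of-disintegration argument — one picks a countable algebra generating $\mathcal{B}_X$, discards a $Q_0$-null set for each generator, and takes the union. An alternative that sidesteps disintegration uniqueness is to directly test $d\pi_0 = e^{\phi_i}d\nu\,dQ_0$ against functions of the form $g(x)h(y)$ with $g$ continuous on $X$ and $h$ continuous on $Y$: this yields $\int h(y)\big[\int g(x)e^{\phi_1(x,y)}d\nu(x)\big]dQ_0(y) = \int h(y)\big[\int g(x)e^{\phi_2(x,y)}d\nu(x)\big]dQ_0(y)$ for all such $h$, so the two continuous functions of $y$ in brackets agree $Q_0$-a.e., hence (by full support of $Q_0$ and continuity) everywhere; running this over a countable uniformly dense family of $g$'s gives $\int g\, e^{\phi_1(\cdot,y)}d\nu = \int g\, e^{\phi_2(\cdot,y)}d\nu$ for all continuous $g$ and all $y$, and then $\operatorname{supp}(\nu)=X$ finishes it. Either route is routine once the full-support observations are in place; the crux is simply recognizing that positivity on open sets propagates to $Q_0$ and that continuity plus full support turns almost-everywhere equality into everywhere equality.
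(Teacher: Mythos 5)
Your proof is correct, but it takes a longer route than necessary. The paper's proof is a one-step observation: since both $e^{\phi_1}$ and $e^{\phi_2}$ satisfy $d\pi_0(x,y) = e^{\phi_i(x,y)}\,d\nu(x)\,dQ_0(y)$, they are densities of the same measure with respect to $d\nu\,dQ_0$ and hence agree $(\nu\times Q_0)$-a.e., therefore $\pi_0$-a.e.; then, because $\phi_1,\phi_2$ are continuous, the set $\{\phi_1=\phi_2\}$ is closed, and having full $\pi_0$-measure it must contain $\operatorname{supp}(\pi_0)$, which equals $X\times Y$ because $\pi_0$ is positive on open sets. You instead factor the a.e.\ statement through the coordinates: disintegration uniqueness gives fiber-wise equality for $Q_0$-a.e.\ $y$, then full support of $\nu$ plus continuity in $x$ upgrades to all $x$, then you separately prove $Q_0$ has full support (correctly, from positivity of $\pi_0$ on open sets), and finally use density of the good $y$'s and joint continuity to reach all of $Y$. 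Every step checks out (in particular $D$ is closed since $y\mapsto\sup_x|\phi_1(x,y)-\phi_2(x,y)|$ is continuous, so measurability is not an issue), and your alternative route via $g(x)h(y)$ test functions is also fine. But the disintegration-uniqueness machinery and the two-stage a.e.-to-everywhere upgrade are both sidesteps: once you know the two continuous densities agree $\pi_0$-a.e., the hypothesis that $\pi_0$ is positive on open sets finishes the argument directly on $X\times Y$, with no need to descend to the fibers or to establish full support of $Q_0$ as an intermediate step.
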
 	

\begin{proof}
Let $Q$ be the $y$-marginal of $\pi_0$ and $\phi_2$  be a continuous function, such that, $e^{\phi_2}$ is a $\nu-$Jacobian of $\pi_0$. Then,
\[d\pi_0(x,y) = e^{\phi(x,y)}d\nu(x)dQ(y) \,\,\,\text{and}\,\,\, d\pi_0(x,y) = e^{\phi_2(x,y)}d\nu(x)dQ(y).\]
This shows that the positive functions $e^{\phi_2}$ and $e^{\phi}$ satisfy $e^{\phi_2(x,y)}=e^{\phi(x,y)}$, $\pi_0$-a.e. $(x,y) \in X\times Y$. 
As $\pi_0$ is positive on open sets and $\phi,\phi_2$ are also continuous, we get $\phi_2(x,y)=\phi(x,y)$, for all $(x,y) \in X\times Y$.
\end{proof}

\begin{definition} [Information Gain for compact spaces]  \label{defIGall2}
	Let $X$ and $Y$ be compact metric spaces and $\pi_0$ be a probability on $X\times Y$ which is positive on open sets and has $x-$marginal $P_0$. Suppose there exists a continuous function $\psi_0:X\times Y\to\mathbb{R}$, such that, $e^{\psi_0}$ is a $P_0-$Jacobian of $\pi_0$. For any probability $\pi$ on $X\times Y$ we define the information gain of $\pi$ with respect to $\pi_0$ as
	\begin{equation}\label{IGall2}
	IG(\pi,\pi_0) = -\int \psi_0\, d\pi - H^{ P_0}(\pi).
	\end{equation}
\end{definition}

Suppose that $\pi_0$ has $y-$marginal $Q_0$ and  $d\pi_0(x,y) = e^{\psi_0(x,y)}dP_0(x)dQ_0(y)$. Observe that in the expression $-\int \psi_0\, d\pi - H^{ P_0}(\pi)$ of \eqref{IGall2} appears $\psi_0$ and $P_0$ but not $Q_0$. By considering the probability kernel $\hat{\nu}^y(dx) =e^{\psi_0(x,y)}dP_0(x)$ the next result shows that this definition is coherent with Definition \ref{defIG}.

\begin{proposition}
	Under the assumptions of Definition \ref{defIGall2}, we define $\hat{\nu}^y(dx)=e^{\psi_0(x,y)}dP_0(x)$. Then,
	\[ IG(\pi,\pi_0) =  IG(\pi,\hat{\nu}) . \]
\end{proposition}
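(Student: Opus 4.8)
The plan is to reduce the statement to Corollary \ref{IGequivalents} (equivalently, to the Proposition immediately preceding it), applied with the $x$-marginal $P_0$ of $\pi_0$ playing the role of the a priori probability $\nu$ on $X$ and with $\psi_0$ playing the role of $\phi_0$. If this reduction is legitimate, then Corollary \ref{IGequivalents} with $\nu=P_0$ and $\phi_0=\psi_0$ produces precisely the a priori probability kernel $\hat{\nu}^y(dx)=e^{\psi_0(x,y)}dP_0(x)$ appearing in the statement, and it gives $IG(\pi,\hat{\nu})=-\int \psi_0\,d\pi-H^{P_0}(\pi)$; by Definition \ref{defIGall2} the right-hand side is exactly $IG(\pi,\pi_0)$, which is the desired identity. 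So the whole proof consists in checking the hypotheses of Corollary \ref{IGequivalents}: that $P_0$ is a genuine a priori probability (i.e. $\operatorname{supp}(P_0)=X$), that $\psi_0$ is bounded, and that $\psi_0$ is $P_0$-normalized (so that $\hat{\nu}$ is a probability kernel in the sense of Definition \ref{defIG}).

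The first two points are immediate. Since $\pi_0$ is positive on open sets of $X\times Y$, for any nonempty open $U\subseteq X$ the set $U\times Y$ is open and nonempty, hence $P_0(U)=\pi_0(U\times Y)>0$; thus $\operatorname{supp}(P_0)=X$. And $\psi_0$ is continuous on the compact space $X\times Y$, hence bounded.

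The only point requiring an argument — and the main (mild) obstacle — is the $P_0$-normalization of $\psi_0$. Let $Q_0$ be the $y$-marginal of $\pi_0$. The hypothesis that $e^{\psi_0}$ is a $P_0$-Jacobian of $\pi_0$ means $\int f(x,y)\,d\pi_0(x,y)=\iint f(x,y)e^{\psi_0(x,y)}\,dP_0(x)dQ_0(y)$ for every bounded measurable $f$. Testing this against functions of the form $f(x,y)=g(y)$ and using that $Q_0$ is the $y$-marginal of $\pi_0$ yields $\int g(y)\,\Phi(y)\,dQ_0(y)=\int g(y)\,dQ_0(y)$ for all bounded measurable $g$, where $\Phi(y):=\int e^{\psi_0(x,y)}dP_0(x)$; hence $\Phi=1$ for $Q_0$-a.e. $y$. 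The function $\Phi$ is continuous (by dominated convergence, since $\psi_0$ is continuous and $X$ is compact), and $Q_0$ has full support in $Y$ because $\pi_0$ is positive on open sets (for nonempty open $V\subseteq Y$, $Q_0(V)=\pi_0(X\times V)>0$). Therefore $\Phi$ equals $1$ on a dense subset of $Y$, so $\Phi\equiv 1$ by continuity, i.e. $\psi_0$ is $P_0$-normalized. Consequently $\hat{\nu}^y(dx)=e^{\psi_0(x,y)}dP_0(x)$ is a probability kernel, Corollary \ref{IGequivalents} applies with $\nu=P_0$ and $\phi_0=\psi_0$, and the chain of equalities above gives $IG(\pi,\pi_0)=IG(\pi,\hat{\nu})$.
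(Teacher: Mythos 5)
Your proof is correct and takes the same route as the paper, which simply cites Corollary \ref{IGequivalents}; the added value of your write-up is that you explicitly verify the hypotheses of that corollary, in particular that $\psi_0$ is $P_0$-normalized for \emph{every} $y$ (not merely $Q_0$-a.e.), using continuity of $y\mapsto\int e^{\psi_0(x,y)}\,dP_0(x)$ and full support of $Q_0$, both of which follow from $\pi_0$ being positive on open sets. This verification is a genuine (if mild) gap in the paper's one-line proof, and your filling of it is sound.
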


\begin{proof} It is a consequence of corollary \ref{IGequivalents}.
\end{proof}

The next proposition shows that the above interpretation of information gain does not depend, in a sense to be explained, on the choice of the {\it a priori} probability $\nu$.

\begin{proposition}\label{IGanynu} Let $X$ and $Y$ be compact metric spaces and $\pi_0$ be a probability on $X\times Y$ which is positive on open sets. Let $\nu$ be an a priori probability on $X$. Suppose there exist a continuous function $\phi$, such that, $e^{\phi}$ is a $\nu$-Jacobian of $\pi_0$. Let $\pi$ be any probability on $X\times Y$. Then,
	\[IG(\pi,\pi_0)=-\int \phi \, d\pi - 	H^{\nu}(\pi).\]
\end{proposition}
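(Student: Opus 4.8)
The plan is to reduce the statement to Definition \ref{defIGall2} by relating the continuous $\nu$-Jacobian $\phi$ of $\pi_0$ to a continuous $P_0$-Jacobian $\psi_0$ of $\pi_0$, where $P_0$ is the $x$-marginal of $\pi_0$. First I would apply Proposition \ref{equivalent} with $\pi:=\pi_0$: since $e^{\phi}$ is a $\nu$-Jacobian of $\pi_0$ and $\phi$ is continuous, we obtain that $P_0$ is equivalent to $\nu$, that $\frac{dP_0}{d\nu}(x)=\int e^{\phi(x,y)}\,dQ_0(y)$ is continuous and bounded away from $0$ and $\infty$, and that $\psi_0:=\phi-\log\bigl(\frac{dP_0}{d\nu}\bigr)$ is a continuous $P_0$-Jacobian of $\pi_0$. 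In particular $\psi_0$ is exactly a function of the kind required in Definition \ref{defIGall2}, so that $IG(\pi,\pi_0)=-\int \psi_0\,d\pi - H^{P_0}(\pi)$.

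Next I would compare $H^{P_0}(\pi)$ with $H^{\nu}(\pi)$. Since $P_0\ll\nu$ with continuous, strictly positive density $g:=\frac{dP_0}{d\nu}$, the function $\log g$ is bounded and $\nu$-normalized is not quite the right statement — rather, $g$ is a $\nu$-Jacobian (of $P_0$), and one has the elementary change-of-a-priori identity $H^{P_0}(\pi) = H^{\nu}(\pi) + \int \log g\, d\pi$, valid because for any $c$ with $\int e^{c(x,y)}\,dP_0(x)=1$ we may write $c' := c + \log g$ and get $\int e^{c'(x,y)}\,d\nu(x)=1$, and conversely; this is precisely the content underlying Corollary \ref{IGequivalents} (take the a priori probability $\nu$, the $\nu$-normalized function $\log g$, and note that $e^{\log g}\,d\nu = dP_0$, so that the induced kernel is the a priori kernel $\hat\mu\equiv P_0$). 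Either way, I would simply invoke Corollary \ref{IGequivalents} / the preceding proposition with $\phi_0 := \log g$ and $\hat\mu^y(dx)=g(x)\,d\nu(x)=dP_0(x)$, which gives $H^{P_0}(\pi) = H^{\nu}(\pi) + \int \log g\,d\pi$, i.e. $-H^{P_0}(\pi) = -\int \log g\, d\pi - H^{\nu}(\pi)$.

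Finally I would combine the two displays:
\[
IG(\pi,\pi_0) = -\int \psi_0\,d\pi - H^{P_0}(\pi) = -\int \bigl(\phi - \log g\bigr)\,d\pi - \int \log g\,d\pi - H^{\nu}(\pi) = -\int \phi\,d\pi - H^{\nu}(\pi),
\]
which is the claimed identity. I expect no serious obstacle here: the only point requiring a little care is making sure all the integrals are well defined, but $\log g$ is bounded (by the two-sided bound $c_1<g<c_2$ from Proposition \ref{equivalent}), and $\phi$ is continuous on a compact space hence bounded, so every integral against a probability is finite and no $+\infty-\infty$ ambiguity arises. The main conceptual step is recognizing that Proposition \ref{equivalent} supplies the required $P_0$-Jacobian $\psi_0$ and that the dependence on the a priori probability cancels via Corollary \ref{IGequivalents}.
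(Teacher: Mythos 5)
Your proof is correct and follows essentially the same route as the paper: apply Proposition \ref{equivalent} to $\pi_0$ to extract the $P_0$-Jacobian $\psi_0=\phi-\log g$ with $g=\frac{dP_0}{d\nu}$, then relate $H^{P_0}(\pi)$ and $H^{\nu}(\pi)$ through the bounded $\nu$-normalized function $\log g$, and substitute into Definition \ref{defIGall2}. The only cosmetic difference is that you invoke Corollary \ref{IGequivalents} for the change-of-a-priori identity, while the paper derives the same identity $H^{\nu}(\pi)=H^{P_0}(\pi)+\int(\psi_0-\phi)\,d\pi$ inline by observing that $c$ is $\nu$-normalized iff $c+\psi_0-\phi$ is $P_0$-normalized.
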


\begin{proof}
	Let $P$ and $Q$ be the marginals of $\pi_0$.
		From lemma \ref{equivalent}, $P$ is equivalent to $\nu$, with $\frac{d P}{d\nu}(x) =\int e^{{\phi}(x,y)}dQ(y)$. Furthermore,  the continuous function $\psi := \phi- \log(\frac{d P}{d\nu})$ is such that $e^\psi$ is the $P-$Jacobian of $\pi_0$. Clearly, $e^{\psi-\phi}=\frac{d \nu}{dP}$, and so a Lipschitz function $c$ is $\nu$-normalized, if and only if, $c +\psi -\phi$ is $P$-normalized.
	
	It follows that for any probability $\pi$ we get
	\[ H^{\nu}(\pi) = H^{P}(\pi) + \int \psi-\phi\,d\pi.\]
	Therefore,
	\[ IG(\pi,\pi_0) = -\int \psi\, d\pi - H^{ P}(\pi) = -\int \phi\, d\pi - H^{\nu}(\pi).\]	
\end{proof}

From the above, it is legitimate  to say that \eqref{IGall2} generalizes \eqref{IG2}. The next example shows that in a certain sense to be explained,  \eqref{IGall2}  also generalizes \eqref{eq3} and \eqref{eq3new}.

\begin{example} Suppose $X=\{1,2,...,d\}$, $Y=\{1,2,...,d\}^{\{2,3,4,5,...\}}$ and identify $\Omega$ with $X\times Y$ by the homeomorphism
\begin{equation}\label{omegatoxy}  \Omega \ni |x_1,x_2,x_3,x_4,...) \to (x_1,\,|x_2,x_3,x_4,...)) \in X\times Y.  \end{equation}
	When considering the a priori probability $\nu$ as the counting measure on $X$, we get that a $\nu-$Jacobian of an invariant probability $\pi$ is given by
	\[J^\pi(x_1,x_2,x_3,...) = \lim_{n\to\infty} \frac{\pi(x_1,x_2,....,x_n)}{\pi(x_2,x_3,...x_n)},\]
for $\pi$ a.e. $x\in \Omega$ (it can be extended for any point of $\Omega$ by taking $J^{\pi} =\frac{1}{d}$ in a set of zero measure). Furthermore, the Kolmogorov-Sinai entropy of  $\pi$ coincides with $H^{\nu}(\pi)$.
	A measurable function $\phi:\Omega\to\mathbb{R}$ is normalized if $\sum_{a}e^{\phi(|a,x_2,x_3,...))} =1$, for all $|x_2,x_3,...)$. If $\phi$ is Lipschitz with corresponding equilibrium probability $\pi_{\phi}$, then the unique continuous Jacobian of $\pi_\phi$ is $e^{\phi}$ and the pressure of $\phi$ is zero. It follows from \eqref{eq3} and  Proposition \ref{IGanynu} that for any invariant probability $\pi$ we have
	\[h(\pi,\pi_\phi) = IG(\pi,\pi_\phi).\]
\end{example}

\section{Specific information gain in the TFCA model }\label{xy}

In this section, we introduce a dictionary connecting the definitions and results of previous sections with analogous ones for  the TFCA model. This will allow us to introduce the specific information gain in this setting; a necessary step for introducing  the concept of entropy production in the next section.  First, we will remember some of the main definitions and results for the TFCA model  described  in \cite{LMMS} and which will be  extensively used here.

Let $(M,d)$ be a compact metric space and denote by $\Omega=\Omega^{+}$ the space $M^{\mathbb{N}}$. Elements in $\Omega$ will be written in the form $x=|x_1,x_2,x_3,...),\, x_i \in M$. The space $\Omega$ is compact using the metric
$d(x,y) = \sum_{i=1}^{\infty}\frac{d(x_i,y_i)}{2^{i}}.$
We also consider the Borel sigma-algebra in $\Omega$.

The relation of the  setting of this section with the previous ones can be clarified  by considering $X=M$, $Y=M^{\{2,3,4,5,...\}}$ and identifying $\Omega$ with $X\times Y$, using the homeomorphism given in \eqref{omegatoxy}.
 Observe that $Y$ can be also identified with $X\times Y$ using the homeomorphism
$|x_2,x_3,x_4,...)\to (x_2,|x_3,x_4,x_5,...))$.
From this identification the shift map ${\sigma}:\Omega\to \Omega$ given by $ {\sigma}(|x_1,x_2,x_3,...)) = |x_2,x_3,...)$ can be also interpreted as the projection on $Y$.  We say that a probability $\mu$ on $\Omega=M^{\mathbb{N}}$ is invariant for the shift map $\sigma$ (or, shift-invariant), if for any continuous function $f:\Omega\to\Omega$, we have $\int f\, d\mu = \int f\circ\sigma\, d\mu$.

Assume we  fixed an {\it a priori} probability $\nu$ on $M$ satisfying $\operatorname{supp}(\nu) = M$. For each Lipschitz function $A:\Omega\to\mathbb{R}$ we consider the linear operator $\mathcal{L}_{A,\nu}:C(\Omega) \to C(\Omega)$ defined by
\[\mathcal{L}_{A,\nu}(f)(x) = \int e^{A(|a,x_1,x_2,x_3,...))}f(|a,x_1,x_2,x_3,...))\,d\nu(a),\,\,x=|x_1,x_2,x_3,...).\]

We call $\mathcal{L}_{A,\nu}$ the Ruelle operator (or, transfer operator) associated to the Lipschitz potential $A$ and the {\it a priori} probability $\nu$ (we refer the reader to \cite{LMMS} for general properties of this operator).

For this operator
there exists a unique (simple) positive eigenvalue $\lambda_A$ associated to a positive eigenfunction $h=h_A$. 
  If a continuous function $h>0$ satisfies  $\mathcal{L}_{A,\nu}(h) =\lambda_A \cdot h$, then $h$ is Lipschitz. If $h_1$ and $h_2$ are eigenfunctions associated to $\lambda_A$, then $h_2 = c\cdot h_1$ for some constant $c$. There exists a unique probability measure $\rho_A$ on  $\Omega$ satisfying $\mathcal{L}_{A,\nu}^*(\rho_A) = \lambda_A \cdot \rho_A$, which  means that
\[\int \mathcal{L}_{A,\nu}(f)\,d\rho_A = \lambda_A \int f\,d\rho_A,\]
for any continuous function $f:\Omega\to\mathbb{R}$. For a matter of convenience we fix the eigenfunction $h_A$ which satisfies $\int h_A\, d\rho_A = 1$.

\medskip

We point out that in the case the space of symbols $M$ is not countable we really need to introduce an {\it a priori} probability in order to get a transfer operator.
\bigskip

A Lipschitz function $A$ is called $\nu$-normalized if $\mathcal{L}_{\bar{A},\nu}(1) = 1$, that is,
\[ \int e^{A(|a,x_1,x_2,x_3,...))}\,d\nu(a)=1,\,\,\forall\,x=|x_1,x_2,x_3,...).\]
The function
\begin{equation} \label{coh} \bar{A} = A+\log(h_A) - \log(h_A\circ \sigma) - \log(\lambda_A)\end{equation}
is $\nu$-normalized. The associated eigenprobability $\rho_{\bar{A}}$ is shift-invariant and it will be denoted also by $\mu_A$. It also satisfies $d\mu_A = h_Ad\rho_A$ and $\mathcal{L}_{\bar{A},\nu}^*(\mu_A) = \mu_A$.

 The {\bf relative entropy} of an invariant probability $\mu$ on $\Omega$ with respect to the {\it a priori} probability $\nu$ on $M$ is defined in \cite{LMMS} as
\[ h^\nu(\mu) = -\sup_{B\,is\,\nu- normalized} \int B\, d\mu.\] Considering the above identification of $\Omega$ and $X\times Y$ and applying Theorem \ref{entropylip} we see that this definition is consistent with the previous definition of $H^{\nu}$.
Let us formally enunciate this result (using also Theorem \ref{theoKL}).

\begin{theorem} Denoting by $\pi$ the probability on $X\times Y$, which corresponds to the shift-invariant probability $\mu$ on $\Omega$, we get that the relative entropy  $h^\nu(\mu)$, as defined in \cite{LMMS}, coincides with $H^{\nu}(\pi)$. Furthermore, if $Q$ is the $y-$marginal of $\pi$ (which is identified with $\pi$ because $\mu$ is shift invariant), then we have
\[h^{\nu}(\mu) = H^{\nu}(\pi) = -D_{KL}(\pi\,|\,\nu\times Q).\]	
\end{theorem}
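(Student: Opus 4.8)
The plan is to reduce the statement to the variational characterization of $H^{\nu}$ over Lipschitz functions (Theorem \ref{entropylip}) together with the duality with $D_{KL}$ (Theorem \ref{theoKL}), so that almost all the work is a matter of unwinding the identification of $\Omega=M^{\mathbb{N}}$ with $X\times Y$ via \eqref{omegatoxy}. First I would record that under this homeomorphism a continuous function $f:X\times Y\to\mathbb{R}$ is exactly a continuous function $f(x_1,|x_2,x_3,\dots))$ on $\Omega$, and that the condition $\int e^{f(x,y)}\,d\nu(x)=1$ for all $y\in Y$ becomes precisely $\int e^{f(|a,x_2,x_3,\dots))}\,d\nu(a)=1$ for all $|x_2,x_3,\dots)\in\Omega$, which is the $\nu$-normalization condition $\mathcal{L}_{f,\nu}(1)=1$ used in \cite{LMMS}. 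Hence the two suprema
\[
\sup\Big\{\int f\,d\pi : \int e^{f(x,y)}\,d\nu(x)=1\ \forall y,\ f\ \text{Lipschitz}\Big\}
\quad\text{and}\quad
\sup_{B\ \nu\text{-normalized}}\int B\,d\mu
\]
are literally the same supremum once $\mu$ and $\pi$ are identified; here one must only note that a Lipschitz function on $X\times Y$ corresponds to a Lipschitz function on $\Omega$ for the product metric, which is routine since the two metrics are bi-Lipschitz-comparable on the relevant product.

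Granting that, Theorem \ref{entropylip} gives $H^{\nu}(\pi)=h^{\nu}(\mu)$ directly by comparing the negatives of these suprema, which is the first asserted equality. For the second equality I would invoke Theorem \ref{theoKL}: taking the a priori probability kernel $\hat{\nu}\equiv\nu$, that theorem yields $H^{\nu}(\pi)=-D_{KL}(\pi\,|\,\nu\times Q)$, where $Q$ is the $y$-marginal of $\pi$. It remains to justify the parenthetical remark that $Q$ is identified with $\pi$ (equivalently with $\mu$) when $\mu$ is shift-invariant: under \eqref{omegatoxy} the $y$-marginal of $\pi$ is the push-forward of $\mu$ under the projection $|x_1,x_2,x_3,\dots)\mapsto |x_2,x_3,\dots)$, which is $\sigma_*\mu=\mu$ by invariance, and then $Y$ is itself identified with $X\times Y$ by the homeomorphism $|x_2,x_3,\dots)\mapsto(x_2,|x_3,x_4,\dots))$ mentioned in the text, so $Q$ "is" $\mu$, hence "is" $\pi$. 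Combining, $h^{\nu}(\mu)=H^{\nu}(\pi)=-D_{KL}(\pi\,|\,\nu\times Q)$.

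The only genuinely delicate point is the Lipschitz-versus-continuous (or measurable) issue in the first step: the definition in \cite{LMMS} takes the supremum over $\nu$-normalized Lipschitz potentials, whereas $H^{\nu}$ was a priori defined with a supremum over $\mathcal{F}(\pi)$; this gap is exactly what Theorem \ref{entropylip} closes, so I would simply cite it rather than reprove it, and the remaining identification of function classes and normalization conditions under \eqref{omegatoxy} is then purely bookkeeping. I would therefore present the proof as: (i) translate the normalization conditions and function spaces across the homeomorphism; (ii) apply Theorem \ref{entropylip} to get $H^{\nu}(\pi)=h^{\nu}(\mu)$; (iii) apply Theorem \ref{theoKL} with $\hat{\nu}\equiv\nu$ to get $H^{\nu}(\pi)=-D_{KL}(\pi\,|\,\nu\times Q)$; (iv) observe $Q$ is identified with $\mu$ by shift-invariance. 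I expect step (i) — checking that Lipschitzness and the $\forall y$ normalization transport correctly, including the identification of $Y$ with $X\times Y$ so that $Q$ makes sense as a probability on the same symbolic space — to be the part requiring the most care, though it involves no hard estimate.
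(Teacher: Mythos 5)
Your proposal is correct and follows exactly the route the paper takes: the paper's proof (given in the text just above the theorem) consists precisely in invoking the identification of $\Omega$ with $X\times Y$ via \eqref{omegatoxy}, observing that $\nu$-normalized Lipschitz potentials on $\Omega$ correspond to the Lipschitz functions in the supremum of Theorem \ref{entropylip}, and then applying Theorem \ref{theoKL} with $\hat{\nu}\equiv\nu$. Your added bookkeeping about bi-Lipschitz comparability of the metrics and the identification of $Q$ with $\sigma_*\mu=\mu$ is a reasonable spelling-out of what the paper leaves implicit, not a deviation from its argument.
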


In \cite{ACR} it is proved that $h^{\nu}(\mu)$ coincides with the so called specific entropy of Statistical Mechanics (see \cite{Geo}).

\bigskip 	

For any Lipschitz function $A:\Omega\to\mathbb{R}$ we have $h^{\nu}(\mu_A) = -\int \bar{A}\,d\mu_A.$
Furthermore,
\[P_\nu(A):=\sup_{\mu\,\, shift-invariant} \int A\, d\mu +h^{\nu}(\mu ) = \int A\,d\mu_A + h^{\nu}(\mu_A) = \log(\lambda_A).\]
The number $P_{\nu}(A)$ is called  the $\nu-$pressure of $A$.
A probability $\mu$ attaining the supremum value $P_\nu(A)$ is called an {\bf  equilibrium probability} for  $A$. 
In \cite{ACR} it is proved that  $\mu_A$ is the unique equilibrium probability for $A$.

If $M$ is a finite set with $d$ elements and the {\it  a priori} probability $\nu$ is set to be the counting measure (which is not a probability), then the relative entropy $h^\nu(\mu)$ above defined coincides with the Kolmogorov-Sinai entropy $h(\mu)$ (see prop.7 in \cite{LMMS} and lemma 7 in \cite{LMMS2}). If we set  the {\it a priori} probability $\nu$ as the normalized counting measure on $M$ (which is a probability), then $h^\nu(\mu)= h(\mu) - \log  d\leq 0$.

\bigskip

We remark that all the above concepts  depend on the choice of the a priori probability on $M$. A universe of different choices is possible. If $\nu_1$ and $\nu_2$ are two different a priori probabilities which are not equivalent, then a probability $\mu$ on $\Omega$ could be a $\nu_1$-equilibrium probability, and, at the same time,  it not to be a $\nu_2$-equilibrium probability (see  Proposition \ref{Pmuequivalent}).

We will call an invariant probability $\mu$ on $\Omega$ of  \textbf{equilibrium probability} if there exists at least one {\it a priori} probability $\nu$, satisfying $\operatorname{supp}(\nu)=M$, and also  a Lipschitz $\nu$-normalized function $A$, such that, $\mathcal{L}_{(A,\nu)}^{*}(\mu) = \mu$. In this case the probability $\mu$ is the $\nu$-equilibrium probability for the  normalized potential $A$. If  $A$ is a Lipschitz function  which is not normalized we can apply the  construction given by (\ref{coh}) and we get an associated normalized potential $\bar{A}$ (which is also Lipschitz). The probability $\mu$ is the $\nu$-equilibrium probability for both functions  $A$ and $\bar{A}$.

Let $A:\Omega \to \mathbb{R}$ be a $\nu-$normalized Lipschitz function. We will call $e^{A}$ of a $\nu-$Jacobian of the shift-invariant probability $\mu$ if for any continuous function $g:\Omega \to \mathbb{R}$ we have
\[ \iint e^{A(|a,x_2,x_3,...))}g(|a,x_2,x_3,...))d\nu(a)d\mu(|x_2,x_3,...)) = \int g(x)d\mu(x).\]
The following statements are equivalent for a $\nu-$normalized Lipschitz function $A$ and a shift-invariant probability $\mu$ on $\Omega$:\newline
i. $e^{A}$ is a $\nu-$Jacobian of  $\mu$\newline
ii. $\mu$ is the $\nu$-equilibrium probability  $A$\newline
iii. $\mathcal{L}_{(A,\nu)}^{*}(\mu) = \mu$.

Given an equilibrium probability $\mu$ we denote by $ P_\mu$ the projection of $\mu$ on the first coordinate. This means that for any continuous function $g:\Omega\to\mathbb{R}$, which depends only of the first coordinate, we have
\[ \int_M g(a) d P_\mu(a) := \int_\Omega g(x_1) d\mu(|x_1,x_2,x_3,...)).\]
 The  next result is a corollary of Proposition \ref{equivalent}.

\begin{proposition}\label{Pmuequivalent}
Suppose that $\mu$ is an equilibrium probability and let $ P_\mu$ be the projection of $\mu$ on the first coordinate.  If $\mu$ is the $\nu$-equilibrium probability for the Lipschitz normalized  potential $A$, then $\nu$ is equivalent to $ P_\mu$ and
$$\frac{d P_\mu}{d\nu}(a) =\int e^{{A}(|a,x_1,x_2,x_3...))}d\mu(|x_1,x_2,x_3,...)),$$ which is also Lipschitz. 
\end{proposition}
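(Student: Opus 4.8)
The plan is to derive this proposition as a direct application of Proposition \ref{equivalent} under the identification of $\Omega$ with $X\times Y$ via the homeomorphism \eqref{omegatoxy}. The key observation is that the hypotheses of Proposition \ref{equivalent} are exactly met: the equilibrium probability $\mu$ on $\Omega$ corresponds to a probability $\pi$ on $X\times Y$ (with $X=M$, $Y=M^{\{2,3,4,...\}}$), and since $\mu$ is the $\nu$-equilibrium probability for the Lipschitz $\nu$-normalized potential $A$, the statement ``$e^{A}$ is a $\nu$-Jacobian of $\mu$'' (equivalence i$\Leftrightarrow$ii$\Leftrightarrow$iii listed just above) translates, under this identification, into the statement that $J=e^{\phi}$ is a $\nu$-Jacobian of $\pi$ in the sense of Section \ref{secIFS}, where $\phi:X\times Y\to\mathbb{R}$ is the Lipschitz function corresponding to $A$. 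One just has to check that the disintegration formula defining ``$e^{A}$ is a $\nu$-Jacobian of $\mu$'' matches equation \eqref{eqpijacobi} after rewriting the $y$-marginal of $\pi$ in terms of $\mu$; because $\mu$ is shift-invariant, the $y$-marginal $Q$ of $\pi$ is itself identified with $\mu$ (as noted for the TFCA model), so the integral $\iint e^{A(|a,x_2,x_3,...))} g(|a,x_2,...))\,d\nu(a)\,d\mu(|x_2,x_3,...)) = \int g\,d\mu$ becomes precisely the relation $\iint f(x,y)J(x,y)\,d\nu(x)\,dQ(y)=\int f\,d\pi$ with $J=e^{\phi}$.

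Once this translation is in place, Proposition \ref{equivalent} applies verbatim: it gives that the $x$-marginal $P$ of $\pi$ — which is by definition $P_\mu$, the projection of $\mu$ onto the first coordinate — is equivalent to $\nu$, and that
\[
\frac{dP}{d\nu}(x) = \int e^{\phi(x,y)}\,dQ(y),
\]
which is Lipschitz and bounded between two positive constants. Unwinding the identification, $\phi(x,y)$ corresponds to $A(|a,x_1,x_2,x_3,...))$ with $x\leftrightarrow a$ and $y\leftrightarrow |x_1,x_2,x_3,...)$, and $Q$ corresponds to $\mu$, so this reads exactly
\[
\frac{dP_\mu}{d\nu}(a) = \int e^{A(|a,x_1,x_2,x_3,...))}\,d\mu(|x_1,x_2,x_3,...)),
\]
and the Lipschitz regularity is inherited from that statement in Proposition \ref{equivalent}.

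The steps, in order, are: (1) recall the identification of $\Omega$ with $X\times Y$ via \eqref{omegatoxy} and note that $\mu$ corresponds to $\pi$ with $x$-marginal $P_\mu$ and $y$-marginal $Q$ identified with $\mu$; (2) use the equivalence of conditions i--iii above to assert that $e^{A}$ being the $\nu$-Jacobian of $\mu$ means, under the identification, that $J=e^{\phi}$ is a $\nu$-Jacobian of $\pi$, checking that the two disintegration identities coincide; (3) invoke Proposition \ref{equivalent} to conclude equivalence of $P_\mu$ and $\nu$, the formula for $\frac{dP_\mu}{d\nu}$, its Lipschitz continuity, and the two-sided bound; (4) rewrite everything back in the $\Omega$-coordinates to obtain the stated formula. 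I expect the main (and only mildly nontrivial) obstacle to be step (2): one must be slightly careful that the ``$\nu$-Jacobian of $\mu$'' as defined for shift-invariant probabilities on $\Omega$ in this section genuinely matches the abstract $\nu$-Jacobian of $\pi$ from \eqref{eqpijacobi} — this uses shift-invariance of $\mu$ to identify the disintegrating measure $dQ(y)$ on $Y$ with $d\mu$ on $\Omega$ under the further identification of $Y$ with $\Omega$, and to see that testing against functions $g$ of the full variable $(a,x_2,x_3,\dots)$ is the same as testing against $f(x,y)$. Everything else is a direct citation of the already-proven Proposition \ref{equivalent}.
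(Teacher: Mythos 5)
Your proposal is correct and takes essentially the same approach as the paper, which simply notes that the proposition is a corollary of Proposition \ref{equivalent}; you have merely spelled out the identification of $\Omega$ with $X\times Y$, the matching of the two notions of $\nu$-Jacobian (using shift-invariance to identify the $y$-marginal $Q$ with $\mu$), and the unwinding of notation, all of which is exactly what the paper's terse citation presupposes.
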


It is known that any  equilibrium probability $\mu$ is positive on open sets of $\Omega=M^\mathbb{N}$  (see Prop. 3.1.8. in \cite{Melo} - see also \cite{CMRS}). Therefore, the next result is a corollary of Proposition \ref{uniqueJlip}.

\begin{proposition}\label{uniquejacob} Let $\mu$ be an equilibrium probability and let $\nu$ be an a priori probability. Suppose $A$ is a Lipschitz $\nu-$normalized function, such that, $\mu$ is the $\nu-$equilibrium for $A$, then $A$ is the unique Lipschitz $\nu-$Jacobian of $\mu$.
\end{proposition}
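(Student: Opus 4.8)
Let $\mu$ be an equilibrium probability and let $\nu$ be an a priori probability. Suppose $A$ is a Lipschitz $\nu$-normalized function, such that $\mu$ is the $\nu$-equilibrium for $A$, then $A$ is the unique Lipschitz $\nu$-Jacobian of $\mu$.

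The plan is to deduce this directly as a special case of Proposition \ref{uniqueJlip}, via the identification of $\Omega = M^{\mathbb{N}}$ with $X \times Y$ where $X = M$ and $Y = M^{\{2,3,4,\dots\}}$, using the homeomorphism \eqref{omegatoxy}. Under this identification, a Lipschitz $\nu$-normalized function $A : \Omega \to \mathbb{R}$ becomes a continuous (indeed Lipschitz) function on $X \times Y$, and the condition that $e^A$ is a $\nu$-Jacobian of the shift-invariant probability $\mu$ (i.e., $\iint e^{A(|a,x_2,x_3,\dots))} g(|a,x_2,x_3,\dots)) \, d\nu(a)\, d\mu(|x_2,x_3,\dots)) = \int g \, d\mu$ for all continuous $g$) translates exactly into the statement that $e^A$ is a $\nu$-Jacobian of the probability $\pi$ on $X \times Y$ corresponding to $\mu$, in the sense of the definition preceding Lemma \ref{pijacobian}. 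Here one uses that $\mu$ is shift-invariant, so its $y$-marginal $Q$ (the projection onto $Y = M^{\{2,3,\dots\}}$) is identified with $\mu$ itself, and $d\pi(x,y) = e^{A(x,y)} d\nu(x)\, dQ(y)$ is a disintegration of $\pi$.

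The two hypotheses of Proposition \ref{uniqueJlip} that remain to be checked are: (1) $\mathrm{supp}(\nu) = M = X$, which holds by the standing assumption on a priori probabilities; and (2) $\pi_0 = \pi$ is positive on open sets of $X \times Y$. For (2) I would invoke the fact, recalled just before the statement of this proposition, that any equilibrium probability $\mu$ is positive on open sets of $\Omega = M^{\mathbb{N}}$ (Prop. 3.1.8 in \cite{Melo}); since the homeomorphism \eqref{omegatoxy} carries open sets to open sets, $\pi$ is positive on open sets of $X \times Y$. With both hypotheses verified, Proposition \ref{uniqueJlip} applies: it gives that the continuous function $\phi = A$ is the unique continuous function such that $e^\phi$ is a $\nu$-Jacobian of $\pi$. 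A fortiori $A$ is the unique \emph{Lipschitz} such function (any Lipschitz Jacobian is in particular continuous, so uniqueness among continuous functions forces uniqueness among Lipschitz ones). Transporting back through the identification, $A$ is the unique Lipschitz $\nu$-Jacobian of $\mu$.

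I do not anticipate a serious obstacle here; the content is entirely in Proposition \ref{uniqueJlip}, and the only work is bookkeeping in the $\Omega \leftrightarrow X \times Y$ dictionary and confirming the two hypotheses. The one point requiring a little care is making precise that the notion of "$\nu$-Jacobian of a shift-invariant probability $\mu$" defined in this section (an integral identity over $M^{\mathbb{N}}$ against the a priori measure $\nu$ in the first coordinate) coincides, under \eqref{omegatoxy}, with the notion of "$\nu$-Jacobian of $\pi$" from Section \ref{secIFS} (equation \eqref{eqpijacobi}); this is exactly the translation already used earlier in this section when identifying $h^\nu(\mu)$ with $H^\nu(\pi)$, so it can be stated briefly.
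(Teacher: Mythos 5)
Your proposal is correct and follows exactly the route the paper itself takes: the paper states Proposition \ref{uniquejacob} as an immediate corollary of Proposition \ref{uniqueJlip}, after recalling that equilibrium probabilities are positive on open sets of $\Omega = M^{\mathbb{N}}$ (Prop.\ 3.1.8 in \cite{Melo}). You have simply made explicit the bookkeeping in the $\Omega \leftrightarrow X\times Y$ identification, which the paper leaves implicit.
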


	\bigskip
	
	The next definition is inspired by (\ref{eq3}) and \eqref{IGall2}.

\begin{definition} Let $\eta$ be a shift-invariant probability and $\mu$ be an equilibrium probability.   Then, we define the \textbf{specific information gain} of $\eta$, with respect to $\mu$, by
\begin{equation}\label{infgain}
h(\eta,\mu) = \left[\int B\,d\mu +h^{ P_\mu}(\mu)\right]-\left[\int B\, d\eta - h^{ P_\mu}(\eta)\right],
\end{equation}
where $B$ is any Lipschitz function, such that, $\mu$ is the $ P_\mu$-equilibrium probability of $B$.
\end{definition}

Observe that  by the  variational principle the information gain is $\geq 0$.

 There exists a unique $ P_\mu$-normalized function $\bar{B}$, such that, $\mu$ is the $ P_\mu$-equilibrium for $\bar{B}$. If $B$ is not normalized, then there exists a positive function $h_B$ and a positive number $\lambda_B$, such that,
 $$\bar{B} = B + \log(h_B) - \log(h_B)\circ\sigma - \log(\lambda_B).$$
 It follows that
\[ \int B\,d\mu +h^{ P_\mu}(\mu)-\int B\, d\eta - h^{ P_\mu}(\eta)  =  \int \bar{B}\,d\mu +h^{ P_\mu}(\mu)-\int \bar{B}\, d\eta - h^{ P_\mu}(\eta).\]
This shows that $h(\eta,\mu)$ is well defined (it does not change if either $B$ is $ P_\mu$-normalized, or not).
We remark that if $B$ is (the unique possible) normalized potential, then $\int B\,d\mu +h^{ P_\mu}(\mu)=0$ and therefore we get the following result which is a particular version of \eqref{IGall2}.

\begin{proposition} If $\mu$ is an equilibrium probability and  $e^{B}$ is the Lipschitz $P_{\mu}-$Jacobian of $\mu$, then
$$h(\eta,\mu) = -\int B\, d\eta - h^{ P_\mu}(\eta).$$
\end{proposition}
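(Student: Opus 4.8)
The plan is to derive the claimed identity directly from the defining formula \eqref{infgain}, exploiting the normalization built into the hypothesis. First I would note that saying ``$e^{B}$ is the Lipschitz $P_\mu$-Jacobian of $\mu$'' is precisely condition (i) in the list of equivalent statements recalled above; hence $B$ is a Lipschitz $P_\mu$-normalized function and $\mu$ is the $P_\mu$-equilibrium probability for $B$ (and, by Proposition \ref{uniquejacob}, the only Lipschitz one). In particular, $B$ is an admissible choice of potential in \eqref{infgain}, so $h(\eta,\mu)$ is exactly the value of the right-hand side of \eqref{infgain} with this $B$ inserted; the well-definedness remark preceding the statement guarantees this value is unambiguous.

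Next I would evaluate the first bracket $\int B\,d\mu + h^{P_\mu}(\mu)$. Since $B$ is $P_\mu$-normalized we have $\mathcal{L}_{B,P_\mu}(1)=1$, so the Ruelle operator attached to $B$ has eigenvalue $\lambda_B=1$ and eigenfunction $h_B\equiv 1$; consequently $\bar B = B$, and the general identity $h^{\nu}(\mu_A)=-\int \bar A\,d\mu_A$ specialized to $\nu=P_\mu$, $A=B$, $\mu_A=\mu$ gives $h^{P_\mu}(\mu) = -\int B\,d\mu$. Equivalently, this is just the statement $P_{P_\mu}(B)=\log\lambda_B=0$ together with the variational principle $P_{P_\mu}(B)=\int B\,d\mu + h^{P_\mu}(\mu)$. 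Either way, $\int B\,d\mu + h^{P_\mu}(\mu)=0$. Substituting this into \eqref{infgain} makes the first bracket vanish and leaves exactly $-\int B\,d\eta - h^{P_\mu}(\eta)$, which is the asserted formula; this is also the announced ``particular version of \eqref{IGall2}''.

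There is essentially no real obstacle here: the content is bookkeeping on top of results already in hand, namely the well-definedness of $h(\eta,\mu)$ and the vanishing of the pressure (equivalently of $\int B\,d\mu+h^{P_\mu}(\mu)$) for a normalized potential. The single point deserving a line of care is checking that the given Jacobian exponent $B$ is legitimately one of the potentials allowed in \eqref{infgain} — i.e. that $\mu$ is the $P_\mu$-equilibrium probability of $B$ — which is exactly the equivalence (i)$\Leftrightarrow$(ii) recalled above; after that the computation is immediate.
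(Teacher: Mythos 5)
Your proof is correct and follows essentially the same route as the paper, which dispatches this proposition via the remark immediately preceding it: since $B$ is the (unique) Lipschitz $P_\mu$-normalized potential with $\mu$ as its $P_\mu$-equilibrium, we have $\bar B = B$, $\lambda_B = 1$, $P_{P_\mu}(B) = 0$, hence $\int B\,d\mu + h^{P_\mu}(\mu) = 0$, and the definition of $h(\eta,\mu)$ then reduces to the claimed formula. You have simply spelled out the justification for the vanishing of the first bracket, which the paper leaves implicit. One small point worth noting: as literally typeset, equation \eqref{infgain} reads $\left[\int B\,d\mu + h^{P_\mu}(\mu)\right] - \left[\int B\,d\eta - h^{P_\mu}(\eta)\right]$, which upon substitution would produce $-\int B\,d\eta + h^{P_\mu}(\eta)$, not the proposition's $-\int B\,d\eta - h^{P_\mu}(\eta)$. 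Comparison with \eqref{eq3} and \eqref{IGall2} shows the intended second bracket is $\left[\int B\,d\eta + h^{P_\mu}(\eta)\right]$; the minus sign there is a typo. You have implicitly worked with the intended (corrected) definition, which is what makes your final substitution come out as stated, and this is indeed what the paper intends.
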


The above definition considers, for an equilibrium probability $\mu$, the a priori probability $ P_\mu$. In this way, the {  previous definition of specific information gain does not allow a choice of} $\nu$. The next  result, which is a corollary of Proposition \ref{IGanynu},  shows that if we exchange $P_\mu$ by another {\it a priori} probability $\nu$, then, it is true a similar formula for $h(\eta,\mu)$. This means, that the information gain does not depend on the particular choice of $\nu$, as long as $\mu$ is a $\nu$-equilibrium probability.

\begin{proposition}\label{anynu} Consider any a priori probability $\nu$ and any Lipschitz function $A$, such that, $\mu$ is the $\nu$-equilibrium probability for $A$. Let $\eta$ be any invariant probability. Then $h(\eta,\mu)$ as defined in (\ref{infgain})  satisfies
\begin{equation}
h(\eta,\mu) =  \left[\int A\,d\mu +h^{\nu}(\mu)\right]-\left[\int A\, d\eta - h^{\nu}(\eta)\right].
\end{equation}
 \end{proposition}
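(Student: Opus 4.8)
The plan is to reduce Proposition \ref{anynu} to Proposition \ref{IGanynu} via the identification of $\Omega$ with $X \times Y$ given in \eqref{omegatoxy}, exactly as was done for equilibrium probabilities earlier in the section. First I would set $X = M$, $Y = M^{\{2,3,4,\dots\}}$, and let $\pi$, $\pi_0$ be the probabilities on $X \times Y$ corresponding to $\eta$ and $\mu$ respectively under this homeomorphism; because $\eta$ and $\mu$ are shift-invariant, these probabilities are identified with their $y$-marginals, and the shift $\sigma$ corresponds to the projection onto $Y$. Since $\mu$ is an equilibrium probability it is positive on open sets (Prop. 3.1.8 in \cite{Melo}), so $\pi_0$ is positive on open sets, and thus the hypotheses of Proposition \ref{IGanynu} are met once we supply a continuous $\phi$ with $e^\phi$ a $\nu$-Jacobian of $\pi_0$.

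Next I would take $A$ to be $P_\mu$-normalized without loss of generality (the displayed cohomology identity just before the proposition shows $h(\eta,\mu)$ is unchanged when $B$ is replaced by its normalization, and the same argument applies to $A$ and an arbitrary a priori $\nu$, using $P_\nu(A) = \log \lambda_A$ and invariance of $\eta$ and $\mu$ to kill the coboundary and the $\log \lambda_A$ terms). With $A$ being $\nu$-normalized and $\mu$ the $\nu$-equilibrium probability for $A$, the characterization recalled in this section (statements i--iii) gives that $e^A$ is a $\nu$-Jacobian of $\mu$, hence $e^A$ (viewed through \eqref{omegatoxy}) is a $\nu$-Jacobian of $\pi_0$ in the sense of Definition \ref{defIGall2} / the $\hat\nu$-Jacobian notion of Section \ref{secIFS}. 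Applying Proposition \ref{IGanynu} then yields
\[
IG(\pi,\pi_0) = -\int \phi\, d\pi - H^\nu(\pi)
\]
with $\phi = A$, i.e. $h(\eta,\mu) = -\int A\, d\eta - h^\nu(\eta)$ when $A$ is normalized; and since $A$ normalized gives $\int A\, d\mu + h^\nu(\mu) = 0$ (because $P_\nu(A) = 0$ for a normalized potential), this is exactly the claimed formula $\left[\int A\, d\mu + h^\nu(\mu)\right] - \left[\int A\, d\eta - h^\nu(\eta)\right]$.

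For a general, possibly non-normalized, Lipschitz $A$ with $\mu$ its $\nu$-equilibrium probability, I would reduce to the normalized case by writing $\bar A = A + \log h_A - \log h_A \circ \sigma - \log \lambda_A$ as in \eqref{coh}; both bracketed expressions in the proposition change by $\int (A - \bar A)\, d\mu$ and $\int(A - \bar A)\, d\eta$ respectively, and since $A - \bar A = -\log h_A + \log h_A \circ \sigma + \log \lambda_A$ is a coboundary plus a constant, shift-invariance of $\mu$ and $\eta$ makes these two changes equal, so the right-hand side is unchanged — matching the already-established well-definedness of $h(\eta,\mu)$. The only real subtlety, and the step I would be most careful about, is checking that the identification \eqref{omegatoxy} genuinely transports the ``$\nu$-Jacobian'' notion of the TFCA model (phrased via the Ruelle operator adjoint $\mathcal{L}_{(A,\nu)}^*(\mu) = \mu$) to the ``$\nu$-Jacobian of $\pi$'' notion of Section \ref{secIFS} (phrased via the disintegration identity \eqref{eqpijacobi}); this amounts to unwinding both definitions on cylinder functions and invoking the equivalence of statements i--iii recalled above, which is routine but is where the compactness of $M$ and $\operatorname{supp}(\nu) = M$ are actually used.
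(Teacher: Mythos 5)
Your proof is correct and follows the same route as the paper: reduce to the $\nu$-normalized case via the coboundary identity (which kills the $\int A\,d\mu + h^\nu(\mu)$ term since the pressure of a normalized potential is zero), then invoke Proposition \ref{IGanynu} under the identification $\Omega \cong X\times Y$. The only nit is the phrase ``take $A$ to be $P_\mu$-normalized'' where you clearly mean ``$\nu$-normalized,'' as your subsequent sentences correctly assume; the extra detail you supply about transporting the TFCA $\nu$-Jacobian notion to the disintegration identity \eqref{eqpijacobi} is a reasonable thing to spell out, though the paper treats it as implicit in the dictionary set up at the start of Section \ref{xy}.
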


\begin{proof} First note that if we replace $A$ by its normalization $\bar{A}$, then the value on the right hand side of the above  expression  does not change. Then, we can suppose that $A$ is $\nu-$normalized. Therefore, it is just necessary  to prove that
$h(\eta,\mu) =  -\left[\int A\, d\eta - h^{\nu}(\eta)\right]$.	
But, this follows from Proposition \ref{IGanynu}.
\end{proof}


\begin{example} Consider any a priori probability $\nu$ and the Lipschitz function $A=0$. We observe that $\bar{\nu}=\nu\times\nu\times\nu\times...$ is the $\nu$-equilibrium probability for $A=0$. Given any invariant probability $\eta$,  then
	\begin{equation}
	h(\eta,\bar{\nu}) =  - h^{\nu}(\eta).
	\end{equation}
Therefore, the specific information gain generalizes the concept of  relative entropy in \cite{LMMS}.
\end{example}

Now we propose an interpretation of the information gain by using transfer operators defined from {\it a priori} probability kernels.

\begin{remark} Let $\mu$ be an equilibrium probability  and  suppose that $e^{B}$ is the Lipschitz $\nu-$Jacobian of $\mu$. Consider the identification of $\Omega$ and $X\times Y$ given by \eqref{omegatoxy} and then define an {\it a priori} probability kernel on $\Omega$ by $\hat{\nu}^y(da) = e^{B(a,y)}d\nu(a)$,\,\, $y=|\cdot,x_2,x_3,x_4,...)$.
For a fixed $\hat{\nu}-$normalized function $A$, let $H_A$ be the operator acting on bounded and measurable functions $f:\Omega\to\mathbb{R}$ by
\[H_A(f)(y) = \int e^{A(a,y)}f(a,y)\hat{\nu}^{y}(da),\,\,y=|x_1,x_2,x_3,...). \]
Let $\eta$ be a shift-invariant probability on $\Omega$ and suppose there exists a $\hat{\nu}$-normalized function $A$ such that $H_A^*(\eta) = \eta$. This means that for any measurable function $f$, we have
\[\int e^{A(a,y)}f(a,y)\,\hat{\nu}^y(da)d\eta(y) = \int f(y)\,d\eta(y).\]
Then, $h(\eta,\mu)=IG(\eta,\mu) =IG(\eta,\hat{\nu})= -H^{\hat{\nu}}(\eta) = \int A\,d\eta$. Furthermore, $$IG(\eta,\mu) = \sup\{\int c\, d\eta\,|\, c\, is\, \hat{\nu}-\text{normalized}\}.$$	
\end{remark}

\bigskip

We finish this section by recalling some results presented in \cite{ACR}. Let $\eta,\mu$ be two probabilities on $\Omega$. For each $\Gamma \subset \mathbb{N}$  consider the canonical projection $\pi_\Gamma:\Omega \to M^{\Gamma}$ and, for each $n\in \mathbb{N}$, denote by $\Lambda_n$ the set $\{1,...,n\}$. Moreover, denote by  $\mathcal{A}_n$, the $\sigma$-algebra on $\Omega$ generated by the projections $\{\pi_\Gamma,\,\Gamma \subset \Lambda_n\}$. Denote also
\[\mathcal{H}_{\Lambda_n}(\eta\,|\,\mu) = \left\{ \begin{array}{ll}
\int_\Omega \frac{d\eta|_{\mathcal{A}_n}}{d\mu|_{\mathcal{A}_n}}\log\left(\frac{d\eta|_{\mathcal{A}_n}}{d\mu|_{\mathcal{A}_n}}\right)\,d\mu,& \,\,\text{if} \,\,\eta \ll \mu \,\,\text{on}\,\,\mathcal{A}_n \\
+\infty &\,\, else \end{array} \right. . \]

The next result is a consequence of Theorems 1 and 3 in \cite{ACR}. From this result, we get an alternative and equivalent way of  extending the concept of  specific information gain for the TFCA model by considering \eqref{eq2} instead \eqref{eq3} and \eqref{IGall2}.

\begin{proposition}
If $\mu$ is an equilibrium probability and $\eta$ is shift-invariant on $\Omega$, then
\[\lim_{n\to\infty} \frac{1}{n} \mathcal{H}_{\Lambda_n}(\eta\,|\,\mu) = h(\eta,\mu).\]
\end{proposition}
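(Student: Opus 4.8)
The plan is to reduce this statement to the chain-rule/subadditivity structure of relative entropy for the $\hat\sigma$-dynamics, together with the identification of $h(\eta,\mu)$ with the difference of pressures proved earlier. First I would recall that by Theorem~\ref{theoKL} and the definition of the specific information gain in the TFCA model, we already have
\[
h(\eta,\mu) = \left[\int B\,d\mu + h^{P_\mu}(\mu)\right] - \left[\int B\,d\eta - h^{P_\mu}(\eta)\right] = -\int B\,d\eta - h^{P_\mu}(\eta),
\]
where $B$ is the (Lipschitz, $P_\mu$-normalized) potential with $P_\mu$-Jacobian $e^B$ for $\mu$. So it suffices to prove that the left-hand limit exists and equals this quantity. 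The natural route is to first establish a subadditivity-type relation for $n\mapsto \mathcal{H}_{\Lambda_n}(\eta\,|\,\mu)$, and then compute the increment $\mathcal{H}_{\Lambda_{n+1}}(\eta\,|\,\mu) - \mathcal{H}_{\Lambda_n}(\eta\,|\,\mu)$ explicitly.

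The key computation is the following. On $\mathcal{A}_n$ one has, whenever $\eta\ll\mu$ on $\mathcal{A}_{n+1}$, the conditional-entropy decomposition
\[
\mathcal{H}_{\Lambda_{n+1}}(\eta\,|\,\mu) = \mathcal{H}_{\Lambda_n}(\eta\,|\,\mu) + \mathbb{E}_\eta\!\left[\,\text{KL of the conditional law of } x_{n+1} \text{ given } \mathcal{A}_n \text{ under }\eta \text{ vs. under }\mu\,\right].
\]
Because $\mu$ is the $P_\mu$-equilibrium probability for the normalized potential $B$, the regular conditional probability of $x_{n+1}$ given the past $(x_1,\dots,x_n)$ under $\mu$ is governed by $e^{B}$ against $P_\mu$ — this is exactly the content of the eigenfunction/Jacobian relation $\mathcal{L}_{(B,P_\mu)}^*(\mu)=\mu$ — with an error term that decays because $B$ is Lipschitz (exponential mixing / bounded variation of $B$ on cylinders). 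Thus the increment is, up to a summable error, $-\int B\,d\eta$ minus the relative-entropy contribution of $\eta$'s own conditional laws, which by the variational characterization of $h^{P_\mu}$ in Theorem~\ref{entropylip} Cesàro-averages to $-h^{P_\mu}(\eta)$. Shift-invariance of $\eta$ is what makes these increments asymptotically constant, so the Cesàro limit exists and equals $-\int B\,d\eta - h^{P_\mu}(\eta) = h(\eta,\mu)$.

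Concretely, the steps in order are: (1) cite Theorems~1 and~3 of \cite{ACR} to get both the existence of the limit $\lim_n \frac1n \mathcal{H}_{\Lambda_n}(\eta\,|\,\mu)$ and its identification with $[\int B\,d\mu + h^{P_\mu}(\mu)] - [\int B\,d\eta + (-h^{P_\mu}(\eta))]$ in the abstract statistical-mechanics form stated there; (2) translate that formula into the present notation using the identification of $\mu$ as the $P_\mu$-equilibrium probability for $B$, the relation $h^\nu(\mu_A) = -\int\bar A\,d\mu_A$, and $h^{P_\mu}(\mu) + \int B\,d\mu = P_{P_\mu}(B) = 0$ for the normalized $B$; (3) conclude by comparing with \eqref{infgain}, i.e. with the definition $h(\eta,\mu) = [\int B\,d\mu + h^{P_\mu}(\mu)] - [\int B\,d\eta - h^{P_\mu}(\eta)]$, and invoking Proposition~\ref{anynu} if one wishes to phrase it for a general a priori $\nu$.

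The main obstacle is step (1)–(2): making sure that the objects in \cite{ACR} (specific entropy, relative specific entropy in the DLR/statistical-mechanics sense) are matched correctly with $h^{P_\mu}$ and with the conditional structure of the one-sided shift, since $\cite{ACR}$ works on a lattice/two-sided setup while here $\Omega = M^{\mathbb N}$ is one-sided. I would handle this exactly as \cite{ACR} handles the entropy $h^\nu$ itself — by pushing forward to the natural extension $\hat\Omega$ and using that the relative entropy on finite coordinate blocks is insensitive to whether one reads the block as "past" or "future" — and then the Cesàro average produces the specific quantity. Everything else is routine: the error terms are controlled by the Lipschitz constant of $B$ on cylinders (bounded distortion), and the averaging uses only $\hat\sigma$-invariance, which $\hat\eta$ and $\hat\mu$ inherit from $\eta$ and $\mu$.
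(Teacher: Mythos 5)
Your proposal ultimately reduces to the same move the paper makes: citing Theorems~1 and~3 of \cite{ACR} for the existence and identification of $\lim_n \frac{1}{n}\mathcal{H}_{\Lambda_n}(\eta\,|\,\mu)$, and then translating into the present notation using $\int B\,d\mu + h^{P_\mu}(\mu)=0$ for the normalized potential $B$ and comparing with \eqref{infgain}. The extra chain-rule/conditional-entropy heuristics you sketch are a reasonable description of what \cite{ACR} does internally, but they are not a different route; the paper's own proof is exactly this citation.
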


\section{The Involution kernel and the entropy production in the TFCA model}\label{secEP}

In the same way as in last section we assume that $M$ is a compact metric space. We denote by $\Omega^{-}$ the space $M^{\mathbb{N}}$ with elements written in the form $y=(...,y_{3},y_{2},y_1|,\,\,y_i \in M$, and using the same metric as the one previously defined in $\Omega=\Omega^+$.

Points in  $\hat{\Omega}=\Omega^-\times \Omega^+$ are written in the form
$$(y\,|\,x)=(...,y_{3},y_{2},y_1|x_1,x_2,x_3,...).$$ 
The bidirectional shift map $\hat{\sigma}:\hat{\Omega}\to\hat{\Omega}$ is defined by (\ref{ret}).
The restrictions  of  $\hat{\sigma}$ to $\Omega^{+}$ and $\Omega^{-}$ are denoted, respectively,  by $\sigma$ and $\sigma^-$.

Observe that  $(\Omega^-,\sigma^-)$  can be identified with $(\Omega,\sigma)$ from the conjugation $\theta: \Omega^-\to \Omega$, given by $\theta((...,z_{3},z_{2},z_1|) = |z_1,z_2,z_3,...)$. Using this conjugation any result previously stated for $(\Omega,\sigma)$ has an analogous claim for $(\Omega^{-},\sigma^-)$.

\bigskip

Consider a Lipschitz function $A:\Omega^-\times \Omega \to \mathbb{R}$, which does not depend of $y\in \Omega^-$. Then, it is naturally expressed as $A(x)=A\,(\, |x_1,x_2,x_3,...)\,)$.  One can show that there exists a (several, in fact) Lipschitz function $W:\Omega^-\times \Omega \to\mathbb{R}$, which is called an \textbf{involution kernel}, and a Lipschitz function $A^{-}$, such that
\begin{equation}\label{Aminus}
A^{-} := A\circ \hat{\sigma}^{-1} + W\circ\hat{\sigma}^{-1} -W,
\end{equation}
where the function $A^{-}$ {\bf does not depend on $x\in \Omega$} (see \cite{BLT},\cite{LMMS}). The action of $A^{-}$   is naturally expressed in coordinates $y= (...,y_3,y_2,y_1| $ as $y \to A^{-}(y)$ and the action of $W$ can be expressed as  $(y\,|\,x) \to W(y\,|\,x).$

All the above can be written in the form:
\[A^{-}(y)=A^{-}((...,y_3,y_2,y_1|) = A(|y_1,x_1,x_2,...) \,)+ W(...,y_3,y_2|y_1,x_1,x_2,x_3,...)\]
\begin{equation}\label{Aminus1}-   W(...,y_3,y_2,y_1|x_1,x_2,x_3,...),\end{equation}
for any $(...,y_3,y_2,y_1|x_1,x_2,x_3,...) \in \hat{\Omega}$.

We point out that  for several important examples of potentials $A$ it is possible to get the involution kernel  $W$ in an explict form (see \cite{CDLS}, \cite{BLT}, \cite{BLL}).

Following \cite{BLT} and \cite{LMMS} we state  two propositions.

\begin{proposition}
	Let $A:\Omega^{+}\to\mathbb{R}$ be a Lipschitz function and $W:\hat{\Omega}\to\mathbb{R}$ be a Lipschitz involution kernel for $A$. Consider the function $A^-$ which was defined by (\ref{Aminus}). Fix an a priori probability $\nu$ on $M$. Then, for any $x\in \Omega^+$, $y\in \Omega^-$ and any function
	$f:\hat{\Omega}\to\mathbb{R}$,
	\begin{equation}
	\mathcal{L}_{A^-,\nu}\left(f(\cdot|x)
	\, e^{W(\cdot|x)}\right)(y)
	=\mathcal{L}_{A,\nu}\left(f\circ\hat\sigma(y|\cdot)
	\, e^{ W(y|\cdot)}\right)(x).
	\end{equation}
	\end{proposition}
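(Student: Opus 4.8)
The plan is to unwind both sides of the claimed identity directly from the definition of the Ruelle operator $\mathcal{L}_{A,\nu}$ and the defining relation \eqref{Aminus} for the involution kernel, and to check that after writing everything as an iterated integral over $M$ the two expressions literally agree. The key observation is that the operator acting in the "$y$" direction, $\mathcal{L}_{A^{-},\nu}$, prepends a new symbol $a$ to the $\Omega^{-}$-coordinate, i.e.\ it sends $y=(\dots,y_2,y_1|$ to $(\dots,y_2,y_1,a|$, with weight $e^{A^{-}(\dots,y_2,y_1,a|)}$; dually, $\mathcal{L}_{A,\nu}$ prepends $a$ to the $\Omega^{+}$-coordinate with weight $e^{A(|a,x_1,x_2,\dots))}$. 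The bridge between these two "prepending" operations is exactly the cocycle relation \eqref{Aminus1}, which, when the free symbol is called $a$, reads
\[
A^{-}(\dots,y_2,y_1,a|) = A(|a,x_1,x_2,\dots)) + W(\dots,y_2,y_1|a,x_1,x_2,\dots) - W(\dots,y_2,y_1,a|x_1,x_2,\dots),
\]
valid for every $x$.

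Concretely, first I would expand the left-hand side: by definition of $\mathcal{L}_{A^{-},\nu}$ applied to the function $z\mapsto f(z|x)e^{W(z|x)}$ and evaluated at $y$,
\[
\mathcal{L}_{A^-,\nu}\!\left(f(\cdot|x)e^{W(\cdot|x)}\right)(y) = \int_M e^{A^{-}(\,\widetilde{y}\,)}\, f(\widetilde{y}\,|\,x)\, e^{W(\widetilde{y}\,|\,x)}\, d\nu(a),
\]
where $\widetilde{y}$ denotes the point of $\Omega^{-}$ obtained from $y$ by appending $a$ on the right of the bar, i.e.\ $\widetilde{y}=(\dots,y_2,y_1,a|$. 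Next I substitute the cocycle identity for $A^{-}(\widetilde{y})$ above; the term $-W(\widetilde{y}\,|\,x)$ cancels the $e^{W(\widetilde{y}|x)}$ factor, and one is left with
\[
\int_M e^{A(|a,x_1,x_2,\dots))}\, f(\widetilde{y}\,|\,x)\, e^{W(y|a,x_1,x_2,\dots)}\, d\nu(a).
\]
Now observe that $\widetilde{y}=(\dots,y_2,y_1,a|$ and the point $(y\,|\,a,x_1,x_2,\dots)=(\dots,y_2,y_1|a,x_1,x_2,\dots)$ are related by $\widetilde{y}\,|\,x = \hat\sigma(y\,|\,a,x_1,x_2,\dots)$ when we identify $\Omega^-\times\Omega^+$ via $\hat\sigma$; thus $f(\widetilde{y}\,|\,x) = (f\circ\hat\sigma)(y\,|\,a,x_1,x_2,\dots)$. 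Similarly $W(y|a,x_1,x_2,\dots)=W(y\,|\,\cdot)$ evaluated at the point $|a,x_1,x_2,\dots)$. Hence the integral becomes exactly
\[
\int_M e^{A(|a,x_1,x_2,\dots))}\,\big(f\circ\hat\sigma(y|\cdot)\big)(|a,x_1,x_2,\dots))\, e^{W(y|\cdot)(|a,x_1,x_2,\dots))}\, d\nu(a) = \mathcal{L}_{A,\nu}\!\left(f\circ\hat\sigma(y|\cdot)\, e^{W(y|\cdot)}\right)(x),
\]
which is the right-hand side.

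The main obstacle is purely notational bookkeeping: one must be scrupulous about which coordinate each operator prepends to, and about the identification of $\hat\Omega$ with the two-sided sequence space under $\hat\sigma$, so that the point appearing inside $f$ on the left ($\widetilde y\,|\,x$ with a symbol $a$ pushed past the bar) is correctly recognized as $\hat\sigma$ applied to the point appearing inside $f\circ\hat\sigma$ on the right. There is no analytic difficulty — no limits, no convergence issues — since $A$, $A^{-}$, $W$ and $f$ are all Lipschitz (hence bounded on the compact space $M^{\mathbb N}$) and the a priori probability $\nu$ on $M$ is fixed, so Fubini is not even needed; everything is a single integral over $M$. The only genuine input is the cocycle identity \eqref{Aminus1}, and the proof is essentially the remark that the involution kernel is designed precisely to intertwine the two prepending operations.
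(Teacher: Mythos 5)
Your proof is correct, and it is the natural (indeed essentially the only) argument. The paper itself does not supply a proof of this proposition --- it states it as a quoted fact following \cite{BLT} and \cite{LMMS} --- so there is nothing in the text to compare against; your derivation is exactly what is hidden behind those citations. The one place a reader must be careful, and you handle it correctly, is that the cocycle identity \eqref{Aminus1} holds for \emph{every} point $(\dots,y_2,y_1,a\,|\,x_1,x_2,\dots)\in\hat\Omega$ (that is what it means for $A^{-}$ to be $x$-independent), so one is free to invoke it with the specific $x$ fixed in the statement; then the $e^{-W(\widetilde y\,|\,x)}$ coming from $A^{-}(\widetilde y)$ cancels the $e^{W(\widetilde y\,|\,x)}$ already present, and the remaining integrand is recognized, via $\hat\sigma(y\,|\,a,x_1,x_2,\dots)=(\widetilde y\,|\,x)$, as the one defining the right-hand side. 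No analytic input is needed beyond this algebraic bookkeeping, as you note.
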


\bigskip


\begin{proposition}\label{prop1} 	Let $A:\Omega^{+}\to\mathbb{R}$ be a Lipschitz function and $W:\hat{\Omega}\to\mathbb{R}$ be a Lipschitz involution kernel for $A$. Consider the function $A^-$ as defined by (\ref{Aminus}). Fix an a priori probability $\nu$ on $M$. Let $\rho_A$ and $\rho_{A^-}$ be the eigenmeasures for $\mathcal{L}^*_{A,\nu}$ and $\mathcal{L}^*_{A^-,\nu}$, respectively. Suppose $c$ is such that
	$\iint \, e^{ W(y|x)-c}\, d\rho_{A^-}(y) d\rho_A(x)=1$, and denote  $K(y|x):=e^{ W(y|x)-c}$. Then,\newline

\noindent
1. The probability	
	$$d\,\hat{\mu}_A= K(y|x)\,d\rho_{A^-}(y)\,\, d\rho_{A}(x)$$
is invariant for $\hat{\sigma}$ and it is an extension of the $\nu$-equilibrium probability $\mu_A$.\newline
2. 	The function $h_A(x)=\int K(y|x)\,d\rho_{A^-}(y)$ is  the main eigenfunction for $\mathcal{L}_{A,\nu},$ and
	the function $h_{A^-}(y)=\int K(y|x)\,d\rho_{A}(x)$ is  the main eigenfunction for $\mathcal{L}_{A^-,\nu}.$ \newline
3.  $\lambda_A=\lambda_{A^-}.$
	\end{proposition}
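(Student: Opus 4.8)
The plan is to verify the three claims by a direct computation using the defining relation of the involution kernel together with the eigenmeasure/eigenfunction equations for the Ruelle operators $\mathcal{L}_{A,\nu}$ and $\mathcal{L}_{A^-,\nu}$, and the symmetry relation stated in the preceding proposition (call it $(\star)$):
\[
\mathcal{L}_{A^-,\nu}\!\left(f(\cdot|x)\,e^{W(\cdot|x)}\right)(y)
=\mathcal{L}_{A,\nu}\!\left(f\circ\hat\sigma(y|\cdot)\,e^{W(y|\cdot)}\right)(x).
\]
First I would record the normalization: the constant $c$ exists because $\iint e^{W(y|x)}\,d\rho_{A^-}(y)\,d\rho_A(x)$ is a finite positive number (by Lipschitzness of $W$ and compactness), so subtracting its logarithm makes $K=e^{W-c}$ integrate to $1$ against $\rho_{A^-}\times\rho_A$; hence $\hat\mu_A$ is indeed a probability.

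For item 2, apply $(\star)$ with $f\equiv 1$. The left side becomes $\mathcal{L}_{A^-,\nu}(e^{W(\cdot|x)})(y)$ and the right side $\mathcal{L}_{A,\nu}(e^{W(y|\cdot)})(x)$; integrating appropriately and using $\mathcal{L}^*_{A^-,\nu}\rho_{A^-}=\lambda_{A^-}\rho_{A^-}$ and $\mathcal{L}^*_{A,\nu}\rho_A=\lambda_A\rho_A$ I would show that $h_A(x)=\int K(y|x)\,d\rho_{A^-}(y)$ satisfies $\mathcal{L}_{A,\nu}(h_A)=\lambda_{A^-}h_A$ and symmetrically $\mathcal{L}_{A^-,\nu}(h_{A^-})=\lambda_A h_{A^-}$. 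The cleanest route is: integrate $\mathcal{L}_{A,\nu}(h_A)(x)$ against nothing — rather, compute $\mathcal{L}_{A,\nu}(h_A)$ pointwise by swapping the $\nu$-integral (hidden in $\mathcal{L}_{A,\nu}$) with the $\rho_{A^-}$-integral (Fubini, everything continuous on a compact space), then recognize the inner expression as $\mathcal{L}_{A^-,\nu}$ acting inside an integral via $(\star)$, and finally use the eigenmeasure equation for $\rho_{A^-}$. Since the eigenfunction of $\mathcal{L}_{A,\nu}$ is unique up to scalar and has eigenvalue $\lambda_A$, the identity $\mathcal{L}_{A,\nu}(h_A)=\lambda_{A^-}h_A$ forces $\lambda_A=\lambda_{A^-}$, which is item 3; item 3 also follows symmetrically from the $h_{A^-}$ computation, giving a consistency check.

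For item 1, I would check $\hat\sigma$-invariance of $d\hat\mu_A=K(y|x)\,d\rho_{A^-}(y)\,d\rho_A(x)$ by testing against an arbitrary continuous $g:\hat\Omega\to\mathbb{R}$ and showing $\int g\circ\hat\sigma\,d\hat\mu_A=\int g\,d\hat\mu_A$. Writing out $g\circ\hat\sigma$, the change of variables in the $\rho_A$-direction is governed by $\mathcal{L}_{A,\nu}$ (its adjoint fixes $\rho_A$ up to $\lambda_A$) and in the $\rho_{A^-}$-direction by $\mathcal{L}_{A^-,\nu}$; the factor $K$ is exactly what intertwines the two pieces, and the relation $A^-=A\circ\hat\sigma^{-1}+W\circ\hat\sigma^{-1}-W$ is what makes the weights match. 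Concretely I expect the computation to reduce, after inserting the definition of $\hat\sigma$ and using \eqref{Aminus}, to an application of $(\star)$ plus the two eigenmeasure equations, with the $\lambda_A=\lambda_{A^-}$ just proved cancelling the eigenvalue factors. That $\hat\mu_A$ projects to $\mu_A$ on $\Omega^+$ then follows because its $x$-marginal is $\left(\int K(y|x)\,d\rho_{A^-}(y)\right)d\rho_A(x)=h_A(x)\,d\rho_A(x)=d\mu_A(x)$, using the normalization $\int h_A\,d\rho_A=1$.

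The main obstacle I anticipate is bookkeeping the two "directions" of integration correctly: $\mathcal{L}_{A,\nu}$ contains a hidden integral over a new symbol $a\in M$ prepended to $x$, while $\hat\sigma$ shifts a symbol from the $y$-side to the $x$-side, so one must be careful about which coordinate is being integrated and how \eqref{Aminus1} redistributes the $W$-terms when the shift is applied. None of the steps is deep — it is all Fubini on compact metric spaces plus the three structural identities — but getting the pairing of variables and the placement of $e^{W}$ and $e^{A^-}$ exactly right is where an error would creep in; the identity $(\star)$ is precisely the lemma engineered to absorb this difficulty, so the strategy is to reduce every claim to an invocation of $(\star)$ combined with the eigenrelations.
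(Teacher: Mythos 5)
The paper does not supply a proof of this proposition; it is stated with the phrase ``Following \cite{BLT} and \cite{LMMS}'' and then quoted from those references, so there is no in-paper argument to compare against. Your plan is nonetheless correct and matches the standard argument used in those sources, and the logical ordering you chose (item~2 first, then item~3 by uniqueness of the leading eigenvalue, then item~1 using $\lambda_A=\lambda_{A^-}$) is the right one. For item~2 the calculation you sketch works verbatim: by Fubini,
\[
\mathcal{L}_{A,\nu}(h_A)(x)=e^{-c}\int \mathcal{L}_{A,\nu}\bigl(e^{W(y|\cdot)}\bigr)(x)\,d\rho_{A^-}(y)
\overset{(\star)}{=}e^{-c}\int \mathcal{L}_{A^-,\nu}\bigl(e^{W(\cdot|x)}\bigr)(y)\,d\rho_{A^-}(y)
=\lambda_{A^-}h_A(x),
\]
and since $h_A>0$ (as $K>0$) and the positive eigenvalue with positive eigenfunction is unique, $\lambda_A=\lambda_{A^-}$; the normalization $\int h_A\,d\rho_A=1$ then holds automatically from the choice of $c$, so $h_A$ is indeed \emph{the} main eigenfunction. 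For item~1, the cleanest way to implement what you describe is to integrate $(\star)$, applied to an arbitrary continuous $f$, against $d\rho_{A^-}(y)\,d\rho_A(x)$: the left side becomes $\lambda_{A^-}e^{c}\int f\,d\hat\mu_A$ and the right side $\lambda_A e^{c}\int f\circ\hat\sigma\,d\hat\mu_A$, so $\hat\sigma$-invariance follows from $\lambda_A=\lambda_{A^-}$, and the extension claim is exactly your marginal computation $\int K(y|x)\,d\rho_{A^-}(y)\,d\rho_A(x)=h_A\,d\rho_A=d\mu_A$. No gap in the plan; the only thing worth spelling out when you write it up is that integrating $(\star)$ against the product measure is the precise step that replaces the somewhat informal ``the change of variables in each direction is governed by the corresponding transfer operator.''
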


In short it can be said that  the function $(y|x) \to e^{ W(y|x)-c}$ is an integral kernel that connects dual objects: the eigenfunction and the eigenprobability for the Ruelle operator.

Now we apply these results in the understanding of the concept of entropy production. We start by refining  item 1. of the last proposition.

\begin{proposition}\label{prop2}
The probability 	$d\,\hat{\mu}_A= K(y|x)\,d\rho_{A^-}(y)\,\, d\rho_{A}(x)$ is the  unique  $\hat{\sigma}$-invariant extension to $\hat{\Omega}$ of the equilibrium probability $\mu_A$ on $\Omega$.
\end{proposition}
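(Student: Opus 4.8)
The plan is to show that any $\hat\sigma$-invariant probability $\hat\eta$ on $\hat\Omega$ whose projection onto $\Omega^+$ equals $\mu_A$ must coincide with $\hat\mu_A = K(y|x)\,d\rho_{A^-}(y)\,d\rho_A(x)$. The standard fact (used already in the excerpt, in the paragraph following \eqref{eq1}) is that a $\sigma$-invariant probability on $\Omega^+$ has a \emph{unique} $\hat\sigma$-invariant extension to $\hat\Omega$: this is because $\hat\Omega$ with $\hat\sigma$ is the natural extension of $(\Omega^+,\sigma)$, and the natural extension is unique up to isomorphism, with the extended measure determined on cylinders by taking $\hat\sigma$-preimages back into the one-sided space. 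So the whole content of the proposition is the identification of $\hat\mu_A$ with \emph{the} natural extension; item~1 of Proposition~\ref{prop1} already asserts $\hat\mu_A$ is $\hat\sigma$-invariant and projects to $\mu_A$, hence by uniqueness of the natural extension it must be the one.

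Concretely, first I would recall/establish the uniqueness statement: if $\hat\eta_1,\hat\eta_2$ are $\hat\sigma$-invariant on $\hat\Omega$ with $(\pi_{\Omega^+})_*\hat\eta_i = \mu_A$, then for any cylinder $C$ of $\hat\Omega$ depending only on coordinates in a window $\{-m,\dots,n\}$, invariance gives $\hat\eta_i(C) = \hat\eta_i(\hat\sigma^{m}(C))$ and $\hat\sigma^{m}(C)$ depends only on nonnegative coordinates, so $\hat\eta_i(\hat\sigma^m(C)) = \mu_A(\pi_{\Omega^+}(\hat\sigma^m(C)))$ is the same for $i=1,2$; since such cylinders generate the Borel $\sigma$-algebra of $\hat\Omega$, $\hat\eta_1=\hat\eta_2$. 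Then I would invoke Proposition~\ref{prop1}(1): $\hat\mu_A$ is $\hat\sigma$-invariant and is an extension of $\mu_A$ (meaning $(\pi_{\Omega^+})_*\hat\mu_A = \mu_A$). Applying the uniqueness statement with $\hat\eta_1 = \hat\mu_A$ and $\hat\eta_2$ any $\hat\sigma$-invariant extension of $\mu_A$ yields the claim.

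The one point requiring a little care — and the main obstacle — is checking that $\hat\mu_A$ genuinely has $\Omega^+$-marginal equal to $\mu_A$ and not just ``an extension'' in some looser sense; but this is exactly what item~1 of Proposition~\ref{prop1} says ($\hat\mu_A$ is an extension of the $\nu$-equilibrium probability $\mu_A$), combined with the earlier identity $d\mu_A = h_A\,d\rho_A$ and item~2 of that proposition ($h_A(x) = \int K(y|x)\,d\rho_{A^-}(y)$), which gives, for $g$ depending only on $x$,
\[
\int g(x)\,d\hat\mu_A(y|x) = \int g(x)\Big(\int K(y|x)\,d\rho_{A^-}(y)\Big)\,d\rho_A(x) = \int g(x)\,h_A(x)\,d\rho_A(x) = \int g\,d\mu_A.
\]
So the marginal computation is immediate from what has already been proved. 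I would therefore present the proof as: (i) Fubini/item~2 computation showing the $\Omega^+$-marginal of $\hat\mu_A$ is $\mu_A$; (ii) invariance of $\hat\mu_A$ from Proposition~\ref{prop1}(1); (iii) the natural-extension uniqueness argument on cylinders; (iv) conclude. No genuinely hard estimate is involved; the only thing to be careful about is phrasing the cylinder argument so that the shift pushes any two-sided cylinder forward into the one-sided part, where the measure is pinned down by $\mu_A$.
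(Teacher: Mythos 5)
Correct, and essentially the same argument as the paper. Both use $\hat\sigma$-invariance of the candidate extension to push dependence on negative coordinates into the nonnegative part, where any invariant extension is pinned down by $\mu_A$; the paper implements this with a uniform approximation of an arbitrary continuous $f$ by functions $f_n$ depending on only finitely many $y$-coordinates, while you implement it with cylinder sets and a generating $\pi$-system, and both rely on Proposition~\ref{prop1}(1) to identify $\hat\mu_A$ as one such extension.
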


\begin{proof} Let $\hat{\mu}$ be any $\hat{\sigma}$-invariant probability on $\hat{\Omega}$ satisfying $\int g\,d\hat{\mu} = \int g\, d\mu_A$, when  $g(y|x)$ does not depend of $y$. Consider  any continuous function $f$ on $\hat{\Omega}$. We claim  that $\int f\, d\hat{\mu} = \int f\, d\hat{\mu}_A$. Indeed,  as $\hat{\Omega}$ is compact, the function $f$ is uniformly continuous. Fix any point $y_0\in M$ and define the functions $f_n$ on $\hat{\Omega}$, $n \in \mathbb{N}$, by
$f_n(y|x) = f(y^n|x)$, where $y^n = (...,y_0,y_0,y_0,y_n,y_{n-1},...,y_2,y_1|$.

It follows that $\{f_n\}$ converges uniformly to $f$, and moreover, the function
$f_n((...,y_3,y_2,y_1|x_1,x_2,x_3,...))$
does not depend of $y_k$, for $k>n$.

From,
\[ \int f_n\, d\hat{\mu} = \int f_n \circ \hat{\sigma}^{-n}\,d\hat{\mu} =  \int f_n \circ \hat{\sigma}^{-n}\,d{\mu_A} =  \int f_n \circ \hat{\sigma}^{-n}\,d\hat{\mu}_A = \int f_n\, d\hat{\mu}_A,\]
we conclude that  $\int f\, d\hat{\mu} = \int f\, d\hat{\mu}_A$.
\end{proof}

\begin{notation} Let $\mu$ be an equilibrium probability on $\Omega^{+}$. We denote by $\hat{\mu}$ the unique  $\hat{\sigma}$-invariant extension to $\hat{\Omega}$ of $\mu$ and by $\mu^-$ the restriction of $\hat{\mu}$ to $\Omega^{-}$.
\end{notation}

\begin{proposition}\label{muminuseq} Let $A:\Omega^+\to\mathbb{R}$ be a Lipschitz function and  $W$ be any Lipschitz involution kernel for $A$. Now, consider the function $A^{-}$ on $\Omega^{-}$ as defined by (\ref{Aminus}). Fix an a priori probability $\nu$ on $M$. Let $\mu_A$ be the $\nu-$equilibrium of $A$ and let $(\mu_A)^-$ defined as above. Then,  	
 	$(\mu_A)^-$ is the $\nu-$ equilibrium of $A^-$ in $\Omega^-$, that is
 	\[(\mu_A)^- = \mu_{(A^-)}.\]
\end{proposition}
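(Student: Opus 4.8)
The plan is to show that $(\mu_A)^-$, the restriction to $\Omega^-$ of the unique $\hat\sigma$-invariant extension $\hat\mu_A$ of $\mu_A$, is exactly the eigenprobability of the normalized transfer operator associated to $A^-$, i.e.\ that $\mu_{(A^-)}=(\mu_A)^-$. By the characterization recalled just before the statement (the equivalence of i, ii, iii), it suffices to verify that $(\mu_A)^-$ is $\sigma^-$-invariant and that $\mathcal{L}_{\overline{A^-},\nu}^*((\mu_A)^-)=(\mu_A)^-$, where $\overline{A^-}$ is the $\nu$-normalization of $A^-$; equivalently, that $e^{\overline{A^-}}$ is a $\nu$-Jacobian of $(\mu_A)^-$ on $\Omega^-$.

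First I would use Proposition~\ref{prop1} and Proposition~\ref{prop2}: the measure $d\hat\mu_A=K(y|x)\,d\rho_{A^-}(y)\,d\rho_A(x)$ with $K(y|x)=e^{W(y|x)-c}$ is the unique $\hat\sigma$-invariant extension of $\mu_A$, and moreover $h_{A^-}(y)=\int K(y|x)\,d\rho_A(x)$ is the main eigenfunction of $\mathcal{L}_{A^-,\nu}$. Integrating out the $x$-variable in $d\hat\mu_A$ gives directly
\[
d(\mu_A)^-(y)=\Bigl(\int K(y|x)\,d\rho_A(x)\Bigr)d\rho_{A^-}(y)=h_{A^-}(y)\,d\rho_{A^-}(y).
\]
But $h_{A^-}\,d\rho_{A^-}$ is precisely the standard recipe producing the invariant equilibrium probability for $A^-$: recall that for any Lipschitz $B$ one has $d\mu_B=h_B\,d\rho_B$ and $\mathcal{L}_{\overline B,\nu}^*(\mu_B)=\mu_B$, with $\overline B$ given by the cohomology formula \eqref{coh}. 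Hence $(\mu_A)^-=\mu_{(A^-)}$, provided we normalize the eigenfunction so that $\int h_{A^-}\,d\rho_{A^-}=1$, which is exactly the meaning of the constant $c$ chosen in Proposition~\ref{prop1} (it makes $\iint K\,d\rho_{A^-}d\rho_A=1$, i.e.\ $(\mu_A)^-$ is a probability). The identification $(\Omega^-,\sigma^-)\cong(\Omega,\sigma)$ via $\theta$ lets us transport every statement about $(\Omega,\sigma)$ used here verbatim, as the excerpt already remarks.

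The only genuine point needing care — and the step I expect to be the main obstacle — is the bookkeeping connecting the ``hatted'' picture on $\hat\Omega$ with the one-sided picture on $\Omega^-$: one must check that the restriction of $\hat\mu_A$ to $\Omega^-$ really is $\sigma^-$-invariant and really equals $h_{A^-}\,d\rho_{A^-}$ as a Borel probability on $\Omega^-$ (not merely as a marginal computed against cylinder functions), and that $h_{A^-}$ as defined by the integral kernel agrees with the main eigenfunction normalized by $\int h_{A^-}\,d\rho_{A^-}=1$. The $\sigma^-$-invariance follows from the $\hat\sigma$-invariance of $\hat\mu_A$ together with the fact that $\mu^-$ is, by construction in the Notation, the restriction of an $\hat\sigma$-invariant measure, so $\sigma^-$-invariance is automatic; and the density computation is the displayed Fubini identity above. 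I would therefore structure the proof as: (1) recall $d\hat\mu_A=K\,d\rho_{A^-}\,d\rho_A$ and that its $\Omega^-$-restriction is $h_{A^-}\,d\rho_{A^-}$; (2) note $h_{A^-}$ is the normalized main eigenfunction of $\mathcal{L}_{A^-,\nu}$ by Proposition~\ref{prop1}(2)--(3); (3) invoke the standard fact $d\mu_{A^-}=h_{A^-}\,d\rho_{A^-}$ to conclude $(\mu_A)^-=\mu_{(A^-)}$; (4) observe by Proposition~\ref{prop2} (applied on $\Omega^-$ via $\theta$) that this object is the unique $\sigma^-$-invariant extension, closing the argument.
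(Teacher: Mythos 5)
Your proof is correct and follows essentially the same route as the paper: recall from Proposition~\ref{prop1} that $d\hat\mu_A = K(y|x)\,d\rho_{A^-}(y)\,d\rho_A(x)$, integrate out $x$ via Fubini to get $(\mu_A)^- = h_{A^-}\,d\rho_{A^-}$, and identify this with $\mu_{A^-}$. The extra scaffolding you add (reducing to the i/ii/iii characterization, the explicit normalization check, and invoking Proposition~\ref{prop2} at the end) is harmless but unnecessary once the Fubini identity $(\mu_A)^- = h_{A^-}\,d\rho_{A^-} = \mu_{A^-}$ is established, which is exactly where the paper stops.
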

\begin{proof}
From the above  	
$$d\,\hat{\mu_A}= K(y|x)\,d\rho_{A^-}(y) d\rho_{A}(x),$$ and
$h_{A^-}(y)=\int K(y|x)\,d\rho_{A}(x)$ is  the main eigenfunction for $\mathcal{L}_{A^-,\nu}$.
Then, for any continuous function $f:\Omega^{-}\to\mathbb{R}$ we get
\[\int f(y)\,d(\mu_A)^- = \int f(y) \,d\hat{\mu_A} = \iint f(y) K(y|x)\,d\rho_{A}(x)d\rho_{A^-}\,(y) \]
\[= \int f(y) h_{A^-}(y)d\rho_{A^-}\,(y) = \int f(y) \,d\mu_{A^-}.\]

\end{proof}

\begin{definition} The {\bf entropy production} of the  equilibrium probability $\mu$ is defined as
	\[ e_p(\mu) = h(\mu,\theta_*\mu^-),\]
	where $\theta_*\mu^-$ on $\Omega^{+}$ is the push-forward of $\mu^{-}$ by the conjugation $\theta:\Omega^-\to\Omega^+$ given by \eqref{ket}.
\end{definition}	

Observe that as a consequence of the variational principle we get  $e_p(\mu)\geq 0$, and it is zero, if and only if, $\mu^-=\mu$. As the specific information gain $h(\mu,\mu^-)$ ``does not depend of $\nu$'', the above definition also ``does not depend of $\nu$''. In fact, by definition, we should have to consider the {\it a priori} probability $P_{\mu^-}$, but, if for some {\it a priori} probability $\nu$ the measure $\mu$ is a $\nu$-equilibrium probability, then it follows that the probability $\mu^-$ also satisfies this property. Now, applying Proposition \ref{anynu} we get an alternative formula for computing  expression $e_p(\mu)$, but now using the {\it a priori} probability $\nu$.  
%
%

We will exhibit below other alternative ways for computing the entropy production.
Let $\hat{\theta}:\hat{\Omega}\to\hat{\Omega}$ be given by
\begin{equation} \label{bra} \hat{\theta}(...,y_3,y_2,y_1|x_1,x_2,x_3,...)=(...x_3,x_2,x_1|y_1,y_2,y_3,...).
\end{equation}

Observe that $\hat{\theta}^{-1}=\hat{\theta}$ and $\hat{\theta} \circ \hat{\sigma}^{-1} = \hat{\sigma}\circ\hat{\theta}$.

\begin{proposition}\label{equal} Let $A:\Omega\to\mathbb{R}$ be a Lipschitz function, $W:\hat{\Omega}\to\mathbb{R}$ be any Lipschitz involution kernel for $A$ and let $A^-:\Omega^-\to\mathbb{R}$ be defined by (\ref{Aminus}).  Let $\mu$ be any  equilibrium probability on $\Omega$ and consider $\hat{\mu}$ and $\mu^-$ defined as above. Then,
	
	\vspace{0.3cm}
	
	1. $\int A\, d\mu = \int A^-\,d\mu^-$
	
	\vspace{0.3cm}
	
	2. $\int A^-\circ \theta^{-1}\, d\mu = \int A\circ \theta \,d\mu^-$.
\end{proposition}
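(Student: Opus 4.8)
The plan is to exploit the defining relation \eqref{Aminus} together with the $\hat\sigma$-invariance of $\hat\mu$. For item 1, recall that $A^{-} = A\circ\hat\sigma^{-1} + W\circ\hat\sigma^{-1} - W$. First I would integrate this identity against $\hat\mu$ over $\hat\Omega$. Since $\hat\mu$ is $\hat\sigma$-invariant, we have $\int A\circ\hat\sigma^{-1}\,d\hat\mu = \int A\,d\hat\mu$ and $\int W\circ\hat\sigma^{-1}\,d\hat\mu = \int W\,d\hat\mu$, so the two $W$-terms cancel and we are left with $\int A^{-}\,d\hat\mu = \int A\,d\hat\mu$. It then remains to observe that, because $A$ depends only on the $x$-coordinate, $\int A\,d\hat\mu = \int A\,d\mu$ (as $\mu$ is the $\Omega^{+}$-marginal of $\hat\mu$), and symmetrically, because $A^{-}$ depends only on the $y$-coordinate, $\int A^{-}\,d\hat\mu = \int A^{-}\,d\mu^{-}$ (as $\mu^{-}$ is the $\Omega^{-}$-marginal of $\hat\mu$). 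Chaining these equalities gives $\int A\,d\mu = \int A^{-}\,d\mu^{-}$.

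The one subtlety in this step is integrability: $A$ and $A^{-}$ are Lipschitz on compact spaces hence bounded, but $W$ need only be Lipschitz on $\hat\Omega$, which is compact, so $W$ is bounded as well; thus all four integrals are finite and the cancellation is legitimate with no $+\infty-\infty$ issue. I expect this to be essentially the only thing to check carefully, and it is routine.

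For item 2, the idea is to conjugate everything by $\hat\theta$ from \eqref{bra}, using the two relations $\hat\theta^{-1}=\hat\theta$ and $\hat\theta\circ\hat\sigma^{-1}=\hat\sigma\circ\hat\theta$. Applying $\hat\theta$ to the defining relation \eqref{Aminus}, and writing $\tilde W := W\circ\hat\theta$, one gets a new function $A^{-}\circ\hat\theta$ that can be expressed through $A\circ\hat\theta$ and $\tilde W$ via a relation of the same shape as \eqref{Aminus} but with the roles of past and future interchanged; more precisely, $A^{-}\circ\hat\theta = A\circ\hat\theta\circ\hat\sigma + \tilde W\circ\hat\sigma - \tilde W$ after using $\hat\theta\hat\sigma^{-1}=\hat\sigma\hat\theta$. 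Now I would push $\hat\mu$ forward by $\hat\theta$: since $\hat\theta$ conjugates $\hat\sigma^{-1}$ to $\hat\sigma$ and $\hat\mu$ is $\hat\sigma$-invariant (hence $\hat\sigma^{-1}$-invariant), $\hat\theta_*\hat\mu$ is again $\hat\sigma$-invariant. Integrating the conjugated relation against $\hat\mu$ and using $\hat\sigma$-invariance of $\hat\mu$ to cancel the $\tilde W$ terms, then translating back through the marginals exactly as in item 1 — here the relevant observation is that $A^{-}\circ\theta^{-1}$ depends only on the $\Omega^{+}$-coordinate and $A\circ\theta$ depends only on the $\Omega^{-}$-coordinate, and that $\theta$, $\theta^{-1}$ relate the $\Omega^{+}$- and $\Omega^{-}$-pictures compatibly with how $\hat\theta$ acts — yields $\int A^{-}\circ\theta^{-1}\,d\mu = \int A\circ\theta\,d\mu^{-}$.

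The main obstacle I anticipate is purely bookkeeping: keeping straight which coordinate ($x$ or $y$, past or future) each of the functions $A$, $A^{-}$, $A\circ\theta$, $A^{-}\circ\theta^{-1}$, $W$, $W\circ\hat\theta$ depends on, and checking that the conjugation identities $\hat\theta^{-1}=\hat\theta$, $\hat\theta\hat\sigma^{-1}=\hat\sigma\hat\theta$, together with Proposition~\ref{muminuseq} (identifying $(\mu_A)^{-}$ with $\mu_{A^{-}}$) and the marginal relation \eqref{eq1}, line up so that the telescoping coboundary terms genuinely cancel under $\hat\mu$. Once the correct substitutions are identified, each cancellation is immediate from $\hat\sigma$-invariance, so no hard estimate is involved.
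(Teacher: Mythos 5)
Your proposal is correct and follows essentially the same route as the paper: both items lift the integral to $\hat\mu$, telescope the $W$-coboundary using $\hat\sigma$-invariance of $\hat\mu$, and then read off the $\Omega^{+}$- and $\Omega^{-}$-marginals, with item 2 in addition using the conjugation $\hat\theta\circ\hat\sigma^{-1}=\hat\sigma\circ\hat\theta$. The remarks about pushing $\hat\mu$ forward by $\hat\theta$ and about Proposition~\ref{muminuseq} are superfluous but harmless; the paper integrates the composed relation directly against $\hat\mu$ without them.
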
	

\begin{proof}
	In order to prove  item 1. we observe that
	\[
	\int A\, d\mu = \int A \,d\hat{\mu}  = \int A\circ\sigma^{-1} + W\circ\hat{\sigma}^{-1} - W \,d\hat{\mu}
	= \int A^- \,d\hat{\mu}   = \int A^- d\mu^-. \]
	
Now	we will  prove  item 2.
	\[ \int A^-\circ\theta^{-1} \,\,d\mu  = \int A^-\circ\hat{\theta} \,\,d\hat{\mu}   = \int \, A\circ \hat{\sigma}^{-1}\circ \hat{\theta} \,+\, W\circ\hat{\sigma}^{-1}\circ \hat{\theta} \,-\,W\circ \hat{\theta} \,\,d\hat{\mu}\]
	\[  =\int  A \circ \hat{\theta} \circ\hat{\sigma} + W\circ \hat{\theta}\circ\hat{\sigma} -W\circ \hat{\theta} \,\,d\hat{\mu} =\int  A \circ \hat{\theta}\,\,d\hat{\mu} = \int A\circ \theta \,d\mu^-. \]
\end{proof}

\begin{proposition} Let $\mu$ be an equilibrium probability and $\nu$ be an a priori probability. Then, $h^\nu(\mu) = h^{\nu}(\mu^-)$.
\end{proposition}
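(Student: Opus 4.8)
The plan is to exploit the symmetry of the involution-kernel construction together with the variational characterization of the relative entropy $h^\nu$. First I would fix a Lipschitz $\nu$-normalized potential $A$ for which $\mu = \mu_A$ is the $\nu$-equilibrium probability (this exists because $\mu$ is an equilibrium probability; if the given potential is not normalized, replace it by its normalization $\bar A$ via \eqref{coh}). Pick any Lipschitz involution kernel $W$ for $A$ and form $A^-$ as in \eqref{Aminus}. By Proposition \ref{muminuseq} we have $\mu^- = (\mu_A)^- = \mu_{(A^-)}$, so $\mu^-$ is precisely the $\nu$-equilibrium probability for $A^-$ on $\Omega^-$.

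Next I would write down the variational-principle identity on both sides. On $\Omega$ we have $P_\nu(A) = \int A\, d\mu + h^\nu(\mu) = \log(\lambda_A)$, and on $\Omega^-$ (using the identification of $(\Omega^-,\sigma^-)$ with $(\Omega,\sigma)$ via $\theta$, under which all the TFCA results transfer) we likewise have $P_\nu(A^-) = \int A^-\, d\mu^- + h^\nu(\mu^-) = \log(\lambda_{A^-})$. Item 3 of Proposition \ref{prop1} gives $\lambda_A = \lambda_{A^-}$, hence $\log(\lambda_A) = \log(\lambda_{A^-})$, so
\[
\int A\, d\mu + h^\nu(\mu) = \int A^-\, d\mu^- + h^\nu(\mu^-).
\]
Now I would invoke item 1 of Proposition \ref{equal}, which states $\int A\, d\mu = \int A^-\, d\mu^-$. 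Subtracting this common value from both sides of the previous display yields $h^\nu(\mu) = h^\nu(\mu^-)$, as desired.

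One point that needs a word of care — and which I expect to be the only genuine obstacle — is the passage between $\Omega^-$ and $\Omega$ in the statement "$h^\nu(\mu) = h^\nu(\mu^-)$": strictly, $h^\nu$ as defined in the excerpt acts on shift-invariant probabilities on $\Omega^+$, while $\mu^-$ lives on $\Omega^-$. This is resolved by the remark in the excerpt that $(\Omega^-,\sigma^-)$ is conjugate to $(\Omega,\sigma)$ via $\theta$, so that every definition and result — in particular the variational formula $h^\nu(\mu^-) = -\sup_{B\ \nu\text{-normalized}} \int B\, d\mu^-$ and the pressure identity — carries over verbatim with $A^-$ in place of $A$; equivalently one may read the statement as $h^\nu(\theta_*\mu^-) = h^\nu(\mu)$, since $h^\nu$ is a conjugacy invariant. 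With that identification in place the argument above is complete; everything else is a direct substitution into formulas already established in Propositions \ref{prop1}, \ref{muminuseq}, and \ref{equal}.
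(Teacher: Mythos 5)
Your proof takes a genuinely different and more economical route than the paper's -- when it applies. The paper works entirely at the level of the supremum defining $h^\nu$: it rewrites $h^\nu(\mu) = -\sup_A\bigl[\int A\,d\mu - \log\lambda_A\bigr]$ over \emph{all} Lipschitz $A$ on $\Omega^+$, transfers each term to the corresponding $A^-$ via Propositions \ref{equal} and \ref{prop1}, obtains one inequality $h^\nu(\mu)\geq h^\nu(\mu^-)$ (because the map $A\mapsto A^-$ need not exhaust the Lipschitz functions on $\Omega^-$), and then argues symmetrically for the reverse inequality. You instead select a single potential $A$ whose $\nu$-equilibrium is $\mu$, identify $\mu^-$ as the $\nu$-equilibrium for $A^-$ via Proposition \ref{muminuseq}, and obtain the equality in one stroke from the two pressure identities together with $\lambda_A=\lambda_{A^-}$ and $\int A\,d\mu=\int A^-\,d\mu^-$. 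That buys a cleaner argument: one equality instead of two inequalities.

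However, there is a real gap in the hypothesis you invoke. The proposition allows an \emph{arbitrary} a priori probability $\nu$, while your first step is to "fix a Lipschitz $\nu$-normalized potential $A$ for which $\mu=\mu_A$," justified by "$\mu$ is an equilibrium probability." That justification does not follow: being an equilibrium probability only means $\mu$ is a $\nu'$-equilibrium for \emph{some} a priori $\nu'$, and by Proposition \ref{Pmuequivalent} any such $\nu'$ must be equivalent to $P_\mu$. If the given $\nu$ is not in that measure class there is no $\nu$-normalized Lipschitz potential whose $\nu$-equilibrium is $\mu$, and your argument has nothing to start from; in particular the pressure identity $P_\nu(A)=\int A\,d\mu+h^\nu(\mu)$, which requires $\mu$ to attain the supremum for $A$ with this $\nu$, is unavailable. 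The paper's supremum argument sidesteps this entirely because it never selects a potential for which $\mu$ is the equilibrium. To close the gap you would need to treat separately the case $\nu\not\sim P_\mu$ -- e.g., when $P_\mu\not\ll\nu$ both sides are $-\infty$, but the intermediate situation $P_\mu\ll\nu$, $\nu\not\ll P_\mu$ still needs an argument -- or simply revert to the supremum route there.
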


\begin{proof} For each Lipschitz function $A:\Omega^{+}\to\mathbb{R}$ we can consider a Lipschitz involution kernel $W$, and then, we get an associated  Lipschitz function $A^{-}:\Omega^{-}\to\mathbb{R}$.

For the fixed {\it a priori} probability $\nu$ we have $\lambda_A=\lambda_{A^-}$. Then,
\[h^{\nu}(\mu) = -\sup_{A\, is \, \nu-normalized} \int A\,d\mu = -\sup_{A\, is \,Lipschitz\,on\,\Omega^+} \int A\,d\mu - \log(\lambda_A)\]
\[ = -\sup_{A^-\, given \,from\,some\,Lip.\,A^+ } \int A^-\,d\mu^- - \log(\lambda_{A^-})\]
\[\geq -\sup_{B^-\,is\,Lipschitz \,on \,\Omega^- } \int B^-\,d\mu^- - \log(\lambda_{B^-})= h^{\nu}(\mu^-).\]

In order to get the opposite inequality, we follow a similar argument. We exchange the reasoning by $\hat{\theta}$: for each Lipschitz function $B^-:\Omega^-\to\mathbb{R}$, we take an involution kernel, and, an associated  Lipschitz function $B^+:\Omega^{+}\to\mathbb{R}$.  Now, we just have to proceed in the same way as before.
	
\end{proof} 	

\bigskip



As a consequence we get  the following claim for the entropy production:
\begin{proposition}\label{variational}
Suppose that $\mu$ is an equilibrium probability and consider the associated probability $\mu^-$. Suppose that for an a priori probability $\nu$ and for a Lipschitz function $A^-$ we have that $\mu^-$ is the $\nu$-equilibrium probability for  $A^-$. Now, assume  that $\mu^-$ and $A^-$ are defined on $\Omega^+$ via the conjugation $\theta$. Then, the entropy production of $\mu$ satisfies
	\[e_p(\mu) =   \int A^-\,d\mu^- - \int A^-\,d\mu.\]
We can take $A^-$, such that, $J^-=e^{A^-}$ is the $\nu-$Jacobian of $\mu^-$.	
\end{proposition}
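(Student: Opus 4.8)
The plan is to apply the formula from Proposition \ref{anynu} to the pair $(\mu,\mu^-)$, together with the fact (just established) that $\mu^-$ is a $\nu$-equilibrium probability whenever $\mu$ is, and the identity of Proposition \ref{equal}. By definition, $e_p(\mu) = h(\mu,\theta_*\mu^-)$, where $\theta_*\mu^-$ is the push-forward of $\mu^-$ to $\Omega^+$. Since $\mu^-$ is a $\nu$-equilibrium probability for $A^-$ on $\Omega^-$, the push-forward $\theta_*\mu^-$ is the $\nu$-equilibrium probability for the potential $A^-\circ\theta^{-1}$ on $\Omega^+$; equivalently, working directly on $\Omega^+$ via $\theta$ (which is how the statement is phrased: "$\mu^-$ and $A^-$ are defined on $\Omega^+$ via the conjugation $\theta$"), $\mu^-$ is the $\nu$-equilibrium for $A^-$.

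**First I would** invoke Proposition \ref{anynu} with the equilibrium probability $\mu^-$ playing the role of "$\mu$", the invariant probability $\mu$ playing the role of "$\eta$", and $A^-$ playing the role of "$A$". This yields
\[
h(\mu,\mu^-) = \left[\int A^-\,d\mu^- + h^\nu(\mu^-)\right] - \left[\int A^-\,d\mu - h^\nu(\mu)\right].
\]
**Then** I would use the previous proposition, $h^\nu(\mu) = h^\nu(\mu^-)$, so that the two entropy terms cancel:
\[
e_p(\mu) = h(\mu,\mu^-) = \int A^-\,d\mu^- - \int A^-\,d\mu,
\]
which is exactly the claimed formula. The final sentence — that one may take $A^-$ with $e^{A^-}$ the $\nu$-Jacobian of $\mu^-$ — follows because the construction \eqref{coh} always produces, from any Lipschitz $A^-$ whose $\nu$-equilibrium is $\mu^-$, a $\nu$-normalized potential $\bar{A}^-$ with the same equilibrium probability, and by Proposition \ref{uniquejacob} this normalized potential is precisely the unique Lipschitz $\nu$-Jacobian; since the right-hand side of the formula is unchanged under replacing $A^-$ by its normalization (both integrals shift by the same coboundary-plus-constant, whose integral against any shift-invariant probability is $-\log\lambda_{A^-}$), the identity persists for this choice.

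**The main subtlety** I would want to be careful about is the bookkeeping of the conjugation $\theta$: one must check that "$\mu^-$ is the $\nu$-equilibrium probability for $A^-$ on $\Omega^-$" transports correctly under $\theta$ to the statement needed to apply Proposition \ref{anynu} on $\Omega^+$, i.e. that $\theta$ conjugates $\sigma^-$ to $\sigma$, sends the eigenmeasure equation $\mathcal{L}^*_{A^-,\nu}\mu^- = \mu^-$ to $\mathcal{L}^*_{A^-\circ\theta^{-1},\nu}(\theta_*\mu^-) = \theta_*\mu^-$, and that $h^\nu$ is invariant under this identification. All of these are routine given the identification of $(\Omega^-,\sigma^-)$ with $(\Omega,\sigma)$ recalled at the start of the section, so the proof is essentially a one-line deduction once the two cited propositions are in hand; there is no genuine obstacle, only the risk of sign or side errors in the $\theta$-transport.
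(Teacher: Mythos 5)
Your proposal is correct and follows exactly the route the paper intends: Proposition \ref{variational} is stated in the paper with no explicit proof (only ``As a consequence we get the following claim''), and the intended deduction is precisely to apply Proposition \ref{anynu} with $\mu^-$ in the role of the equilibrium measure, $\mu$ in the role of the generic invariant measure, and $A^-$ as the potential, then cancel the entropy terms using the immediately preceding result $h^\nu(\mu)=h^\nu(\mu^-)$. Your role assignment, the check that $\theta_*\mu^-$ is a $\nu$-equilibrium probability (so that Proposition \ref{anynu} applies), and the justification of the final remark via the normalization construction \eqref{coh} and the uniqueness in Proposition \ref{uniquejacob} are all sound.

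One small caution: the display you wrote,
\[
h(\mu,\mu^-) = \Bigl[\textstyle\int A^-\,d\mu^- + h^\nu(\mu^-)\Bigr] - \Bigl[\textstyle\int A^-\,d\mu - h^\nu(\mu)\Bigr],
\]
copies a sign typo present in the paper's own statement of Proposition \ref{anynu} and in the definition \eqref{infgain}. As literally written, the entropy terms would \emph{add} rather than cancel. Comparison with \eqref{eq3} (and with the subsequent proposition giving $h(\eta,\mu)=-\int B\,d\eta - h^{P_\mu}(\eta)$ for normalized $B$, and with Proposition \ref{IGanynu}) shows the intended sign is $-\bigl[\int A^-\,d\mu + h^\nu(\mu)\bigr]$. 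With that correction, the cancellation you invoke is exactly right and the argument is complete. So this is not a gap in your reasoning so much as an inherited typographical slip; it is worth fixing the sign in your display so the cancellation step is literally valid.
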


\begin{proposition}\label{variational2} Suppose that $\mu$ is the $\nu$-equilibrium probability for the Lipschitz function $A:\Omega^+\to\mathbb{R}$. Let $W$ be any Lipschitz involution kernel for $A$ and $A^-:\Omega^-\to \mathbb{R}$ be the function defined by (\ref{Aminus}). Suppose that $A^{-}$ is defined on $\Omega^+$ using the conjugation $\theta$. Then,
	\[e_p(\mu) =  \int A - A^-\,d\mu .\]
\end{proposition}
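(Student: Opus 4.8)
The plan is to reduce this claim to Proposition \ref{variational} by showing that $\int A^-\,d\mu^- = \int A^-\,d\mu$ up to the correction coming from $\int A\,d\mu$. Recall from Proposition \ref{variational} that
\[
e_p(\mu) = \int A^-\,d\mu^- - \int A^-\,d\mu,
\]
where on the right-hand side $A^-$ is viewed on $\Omega^+$ via $\theta$, and $\mu^-$ is likewise transported to $\Omega^+$. So it suffices to prove that
\[
\int A^-\,d\mu^- = \int A\,d\mu.
\]
But this is exactly item 1 of Proposition \ref{equal} (which says $\int A\,d\mu = \int A^-\,d\mu^-$, the integral on the right being over $\Omega^-$ against $\mu^-$, before the $\theta$-identification). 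Combining these two displays gives $e_p(\mu) = \int A\,d\mu - \int A^-\,d\mu$, which is the desired $\int A - A^-\,d\mu$.

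\textbf{First} I would invoke Proposition \ref{muminuseq}: since $\mu = \mu_A$ is the $\nu$-equilibrium probability for $A$ and $A^-$ is defined via the involution kernel $W$ by \eqref{Aminus}, we have that $\mu^- = \mu_{A^-}$ is the $\nu$-equilibrium probability for $A^-$ on $\Omega^-$. Hence the hypotheses of Proposition \ref{variational} are satisfied with this particular $A^-$, and we may quote its conclusion
\[
e_p(\mu) = \int A^-\,d\mu^- - \int A^-\,d\mu.
\]
\textbf{Next}, I would apply Proposition \ref{equal}(1), namely $\int A\,d\mu = \int A^-\,d\mu^-$, to rewrite the first term. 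Substituting, $e_p(\mu) = \int A\,d\mu - \int A^-\,d\mu = \int (A - A^-)\,d\mu$, which finishes the proof.

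\textbf{The main subtlety} — and the only point requiring care — is bookkeeping about the two identifications of measures and functions with objects on $\Omega^+$ via $\theta$ (and $\hat\theta$). In Proposition \ref{variational} the symbols $\mu^-$ and $A^-$ have already been transported to $\Omega^+$, whereas in Proposition \ref{equal}(1) the identity $\int A\,d\mu = \int A^-\,d\mu^-$ is most naturally read with $A^-$ and $\mu^-$ living on $\Omega^-$; since $\theta$ is a measurable conjugacy and $\theta_*\mu^-$ is by definition the transported measure, $\int_{\Omega^-} A^-\,d\mu^- = \int_{\Omega^+} A^-\,d(\theta_*\mu^-)$, so the two readings agree and no genuine obstacle arises. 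One should also note that the statement implicitly asserts the right-hand side does not depend on the choice of involution kernel $W$ (different $W$ give different $A^-$ differing by a coboundary plus constant, whose $\mu$-integral is fixed); this consistency is already built into the fact that $e_p(\mu)$ is well defined, so nothing further is needed.
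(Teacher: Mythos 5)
Your proof is correct and takes essentially the same route as the paper: the paper's proof simply says the claim follows from Proposition \ref{variational} and Proposition \ref{equal}, and you flesh out exactly that argument, adding the (appropriate) observation that Proposition \ref{muminuseq} is what licenses applying Proposition \ref{variational} with this particular $A^-$, together with a careful note on the $\theta$-bookkeeping.
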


\begin{proof} The claim follows from the previous result and Proposition \ref{equal}.
\end{proof} 	

\begin{definition}\label{sim} Given the potential $A$, suppose that $A^{-}$ is defined on $\Omega^+$ using the conjugation $\theta$. We say that the potential $A$ is symmetric if
$A = A^- $.
\end{definition}

\begin{corollary}\label{variational21} Suppose for that some  involution kernel 
$W$ the potential  $A:\Omega \to \mathbb{R}$ is symmetric, then, the equilibrium probability for $A$ has entropy production zero.
\end{corollary}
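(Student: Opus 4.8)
The plan is to derive the statement directly from Proposition~\ref{variational2}. That proposition tells us that for the $\nu$-equilibrium probability $\mu$ of a Lipschitz potential $A$, and for any choice of Lipschitz involution kernel $W$ with associated $A^-$ (transported to $\Omega^+$ via $\theta$), we have
\[
e_p(\mu) = \int (A - A^-)\, d\mu.
\]
So the whole argument reduces to: if for some involution kernel $W$ the potential is symmetric, i.e. $A = A^-$ in the sense of Definition~\ref{sim}, then the integrand $A - A^-$ vanishes identically, hence the integral is zero, hence $e_p(\mu) = 0$.

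Concretely, I would first recall that ``$A$ is symmetric'' is by definition the statement that $A = A^-$ as functions on $\Omega^+$, where $A^-$ is the function of \eqref{Aminus} read through the conjugation $\theta$ of \eqref{ket}. Since the hypothesis of the corollary is exactly that this holds for \emph{some} involution kernel $W$, I can apply Proposition~\ref{variational2} \emph{with that particular $W$} (and the same a priori probability $\nu$ for which $\mu$ is the equilibrium probability of $A$). Substituting $A^- = A$ into the formula $e_p(\mu) = \int (A - A^-)\, d\mu$ gives $e_p(\mu) = \int 0 \, d\mu = 0$.

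There is essentially no obstacle here: the corollary is a one-line consequence of Proposition~\ref{variational2}, which in turn rests on Proposition~\ref{variational} together with item~1 of Proposition~\ref{equal} (the identity $\int A\, d\mu = \int A^-\, d\mu^-$) and the machinery of the involution kernel established in Propositions~\ref{prop1}, \ref{prop2} and \ref{muminuseq}. The only point worth a sentence of care is that the formula of Proposition~\ref{variational2} is valid for an \emph{arbitrary} Lipschitz involution kernel $W$ — so it legitimately applies to the one furnished by the symmetry hypothesis — and that the potential's symmetry is an identity of functions, so the integrand is genuinely the zero function rather than merely $\mu$-almost-everywhere zero. Hence the proof is simply:

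\begin{proof}
Let $\mu$ be the $\nu$-equilibrium probability for $A$ (for some a priori probability $\nu$), and let $W$ be an involution kernel for which $A$ is symmetric, that is, $A = A^-$, where $A^-$ is defined by \eqref{Aminus} and transported to $\Omega^+$ via $\theta$. Applying Proposition~\ref{variational2} to this $W$ yields
\[
e_p(\mu) = \int (A - A^-)\, d\mu = \int 0 \, d\mu = 0,
\]
which proves the claim.
\end{proof}
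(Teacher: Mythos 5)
Your proof is correct and matches the paper's (implicit) argument exactly: the corollary is stated immediately after Proposition~\ref{variational2} and Definition~\ref{sim} with no written proof, precisely because it is the one-line substitution $A = A^-$ into $e_p(\mu) = \int (A - A^-)\, d\mu$ that you carry out.
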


There are several examples of potentials $A$  that are symmetric (see for instance \cite{CDLS}, \cite{BLT}, \cite{BLL}). Note that in order to check if the equilibrium probability $\mu$ for the Holder potential $A$ has entropy production zero one have to follow a process of finding the eigenfunction and the eigenprobability; which is a  procedure that in general we do not have explicit expressions.
All this can be avoided when it is possible to show that for some involution kernel the potential is symmetric.

\begin{proposition} Suppose that $\mu$ is an equilibrium probability. Then,
	\[e_p(\mu) = h(\mu,\mu^-)  = h(\mu^-,\mu) = e_p(\mu^-).\]
\end{proposition}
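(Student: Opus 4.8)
The plan is to exploit the symmetry encoded in the involution kernel, which exchanges the roles of $\mu$ and $\mu^-$ together with the roles of $A$ and $A^-$. Recall that $\mu$ is the $\nu$-equilibrium probability for some Lipschitz $A$, that $W$ is a Lipschitz involution kernel, and that by Proposition \ref{muminuseq} the probability $\mu^-$ (transported to $\Omega^+$ via $\theta$) is the $\nu$-equilibrium probability for $A^-$. Moreover, $A$ and $A^-$ share the same eigenvalue (Proposition \ref{prop1}, item 3), so after normalization both have zero $\nu$-pressure. The key observation is that the construction is involutive: if we start from $\mu^-$ with potential $A^-$ and apply the same machinery using the kernel $\hat\theta$ (which satisfies $\hat\theta^{-1}=\hat\theta$ and $\hat\theta\circ\hat\sigma^{-1}=\hat\sigma\circ\hat\theta$), the ``minus'' of $A^-$ is, up to a coboundary and constant, just $A$ again, and $(\mu^-)^-=\mu$.

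\textbf{Key steps.} First I would establish the middle equality $h(\mu,\mu^-)=h(\mu^-,\mu)$, which is the heart of the statement. By Proposition \ref{variational2}, $e_p(\mu)=h(\mu,\mu^-)=\int (A-A^-)\,d\mu$. Applying the same proposition with the roles of $\mu$ and $\mu^-$ interchanged — legitimate because $\mu^-$ is an equilibrium probability with potential $A^-$ and, by the involutive nature of the construction via $\hat\theta$, the associated ``minus'' potential is cohomologous to $A$ — gives $h(\mu^-,\mu)=\int (A^--A)\,d\mu^-$. So it suffices to show
\begin{equation}\label{symkey}
\int (A-A^-)\,d\mu = \int (A^--A)\,d\mu^-,
\end{equation}
i.e. $\int A\,d\mu-\int A^-\,d\mu = \int A^-\,d\mu^- - \int A\,d\mu^-$. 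By Proposition \ref{equal}, item 1, we have $\int A\,d\mu=\int A^-\,d\mu^-$, so \eqref{symkey} reduces to $-\int A^-\,d\mu = -\int A\,d\mu^-$, that is, $\int A^-\,d\mu=\int A\,d\mu^-$. Here $A^-$ and $A$ are regarded on $\Omega^+$ via $\theta$, so this is precisely item 2 of Proposition \ref{equal} (which reads $\int A^-\circ\theta^{-1}\,d\mu=\int A\circ\theta\,d\mu^-$). This closes the chain and yields $h(\mu,\mu^-)=h(\mu^-,\mu)$.

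\textbf{The outer equalities.} For $e_p(\mu)=h(\mu,\mu^-)$ there is nothing to prove: this is the definition of entropy production (using that $\mu^-$ pushed forward by $\theta$ is the relevant probability, and that by the preceding proposition $h^\nu(\mu)=h^\nu(\mu^-)$ so the a priori probability can be taken to be $\nu$ rather than $P_{\mu^-}$). For $h(\mu^-,\mu)=e_p(\mu^-)$ I would simply apply the definition of entropy production to the equilibrium probability $\mu^-$: by definition $e_p(\mu^-)=h(\mu^-,(\mu^-)^-)$, and the content is that $(\mu^-)^-=\mu$. This last identity follows from the uniqueness of the $\hat\sigma$-invariant extension (Proposition \ref{prop2}): the extension $\hat\mu$ of $\mu$ to $\hat\Omega$, when one takes its ``minus'' restriction and then re-extends, must by uniqueness return to $\hat\mu$; equivalently, applying $\hat\theta$ to $\hat\mu$ and comparing marginals shows the double ``minus'' operation is the identity on equilibrium probabilities.

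\textbf{Main obstacle.} The routine parts are the integral identities, which are handed to us by Proposition \ref{equal}. The one point demanding care is the claim that Proposition \ref{variational2} (or Proposition \ref{variational}) may be applied \emph{with $\mu^-$ in place of $\mu$}: this requires knowing that $\mu^-$ is itself an equilibrium probability in the sense of the paper and that its own involution-kernel construction produces a potential cohomologous to $A$. This is where $\hat\theta$ enters — one checks $A^-$ has involution kernel $W\circ\hat\theta$ (up to the coboundary bookkeeping in \eqref{Aminus1}) and that the resulting ``$(A^-)^-$'' equals $A$ modulo a coboundary and an additive constant, so that the specific information gain $h(\mu^-,\mu)$ is unchanged when $A$ is replaced by its normalization. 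Once this symmetry of the involution-kernel construction under $\hat\theta$ is pinned down, everything else is bookkeeping with the already-established integral formulas.
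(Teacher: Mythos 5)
Your proof is correct and follows essentially the same route as the paper: express both $e_p(\mu)$ and $e_p(\mu^-)$ via the integral formulas derived from the involution kernel, then show their equality using Proposition \ref{equal}, with $(\mu^-)^-=\mu$ coming from the uniqueness of the $\hat\sigma$-invariant extension. The only minor difference is that the paper applies Proposition \ref{variational} directly to both $\mu$ and $\mu^-$ (which avoids any need to discuss the potential $(A^-)^-$), whereas you enter through Proposition \ref{variational2} and therefore must argue that $(A^-)^-$ is cohomologous to $A$; you could sidestep that extra claim by following the paper's entry point, but your reasoning is nonetheless sound.
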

\begin{proof} It follows from Proposition \ref{prop1} and \ref{prop2} that $(\mu^{-})^- = \mu$. Consider an {\it a priori} probability $\nu$, such that, $\mu$ is the $\nu-$equilibrium probability for a Lipschitz function $A$. Let $A^-$ defined by (\ref{Aminus}) using any involution kernel. From Proposition \ref{variational} we get
	\[e_p(\mu) = h(\mu,\mu^-)= \int A^-\,d\mu^- - \int A^-\circ\theta^{-1}\,d\mu\]
	and
	\[e_p(\mu^-) = h(\mu^-,\mu) = \int A\,d\mu - \int A\circ\theta\,d\mu^- . \]
	Now, from  proposition \ref{equal} we get
	\[\int A^-\,d\mu^- - \int A^-\circ\theta^{-1} \,d\mu  = \int A\,d\mu - \int A\circ\theta\,d\mu^-.\]
	This ends the proof.
\end{proof}		


The next example considers the more simple case where $M=\{1,2,...,d\}$ is a finite set.

\begin{example} Take $M=\{1,2,...,d\}$ and consider as the {\it a priori} measure  $\nu$  the counting measure on $M$.

Any invariant probability $\mu$ for $(\Omega,\sigma)$ 
%
%
 can be extended to a $\hat{\sigma}$-invariant probability $\hat{\mu}$ on $\hat{\Omega}$ by defining
\[\hat{\mu} ([a_m,...,a_1|b_1,...,b_n]) := \mu(\,|a_m,...,a_1,b_1,...,b_n]\,), \]
and using the extension theorem.
The restriction of $\hat{\mu}$ to $\Omega^{-}$  satisfies
\[\mu^{-}(\,[a_m,...,a_{2},a_1|\,) = \mu(\,|a_m,...,a_{2},a_1]\,).\]

Now, using the conjugation $\theta: \Omega^-\to \Omega$ in order to transfer $\mu^-$ to $\Omega^+$, we get
\begin{equation}\label{eq1}
\theta_*\mu^-(|a_1,a_2....a_m]) = \mu(|a_m,...,a_{2},a_1]).
\end{equation}
As the Kolmogorov-Sinai entropy of $\mu$ is given by
\[h(\mu)=\lim_{n\to\infty}-\frac{1}{n}\sum_{i_1,...,i_n}\mu(|i_1,...,i_n])\log(\mu(|i_1,...,i_n]) ),\]
we conclude that $h(\mu)=h(\mu^{-})$.


\bigskip
 Suppose now, that $\mu$ is the equilibrium probability for the Lipschitz normalized potential $A$. Then, $e^A=J$ is the Jacobian of $\mu$, that is,
\[ e^A(|x_1,x_2,x_3,...))=J(|x_1,x_2,x_3,...)) = \lim_{n\to\infty} \frac{\mu(|x_1,x_2,x_3,...,x_n])}{\mu(|x_2,x_3,...,x_n])}. \]
Let $J^-$ be the Jacobian of $\mu^-$ and define $A^- := \log(J^-)$. 
Then,  using (\ref{eq1}),
\[ e^{A^-}(...,y_3,y_2,y_1|) = J^-(...,y_3,y_2,y_1|) = \lim_{n\to\infty} \frac{\mu(|y_n,...,y_2,y_1])}{\mu(|y_{n},...,y_2])}. \]

\end{example}

\bigskip

The next example computes  the entropy production for a Markov measure $\mu$. Our estimate is coherent with  expression   (1) in \cite{Jiang}.

\begin{example}
	Consider the line stochastic matrix $M=(p_{ij})$ and the initial probability vector $P=(\pi_i)$, such that, $PM=P$.
	
	We denote by $\mu$ the associated Markov measure, that is, for any cylinder $|x_1,x_2,...,x_n]$ we set
	\[\mu(|x_1,x_2,...,x_n])=\pi_{x_1}\cdot p_{x_1x_2}\cdots p_{x_{n-1}x_n}.\]
	Then,
	\[ J(|i,j,x_3,...) = \frac{\pi_{i}p_{ij}}{\pi_j}. \]
	We also  get
	
	$$J^-(...,y_3,j,i|) = \lim_{n\to\infty} \frac{\mu(|y_n,...,y_3,j,i])}{\mu(|y_{n},...,y_3,j])} =$$
	 $$\lim_{n\to\infty} \frac{\pi_{y_n}\cdot p_{y_ny_{n-1}}\cdots p_{y_3j}\cdot p_{ji}}{\pi_{y_n}\cdot p_{y_ny_{n-1}}\cdots p_{y_3j}}=p_{ji}.   $$
	
	As $J^-$ depends only on two coordinates, $\mu^-$ is also a Markov measure.
	
Considering the conjugation $\theta$, we get,
	\[\mu(|i,j])=\pi_{i}p_{ij} \,\,\,and \,\,\,\mu^-(|i,j]) = \mu(|j,i])=\pi_{j}p_{ji}. \]
	Taking $A=\log(J)$ and $A^-=\log(J^-)$, we also get
	\[ e^{A(|i,j,x_3,...)} = \frac{\pi_{i}p_{ij}}{\pi_j}\,\,\,
	and
	\,\,\,e^{A^-(|i,j,z_3,z_4,...))} =  p_{ji}. \]
	Then, using the Proposition \ref{variational2}, we derive
	\[e_p(\mu) = \int A - A^-\,d\mu = \sum_{i,j} \log\left(\frac{\pi_{i}p_{ij}}{\pi_j p_{ji}}\right)\pi_{i}p_{ij}. \]
	
	\bigskip
	
	We can compute $e_p(\mu)$, alternatively, using Proposition \ref{variational}:
	\[e_p(\mu) = \int A^-\,d\mu^- - \int A^-\,d\mu =  \sum_{i,j}\log(p_{ji})\mu^-(|i,j))-\sum_{i,j}\log(p_{ji})\mu(|i,j))\]
	\[=  \sum_{i,j}\log(p_{ji})\pi_jp_{ji}-\sum_{i,j}\log(p_{ji})\pi_i p_{ij}
	=  \sum_{i,j}\log(p_{ij})\pi_i p_{ij}-\sum_{i,j}\log(p_{ji})\pi_i p_{ij} \]
	\[= \sum_{i,j}\log(p_{ij})\pi_i p_{ij}-\sum_{i,j}\log(p_{ji})\pi_i p_{ij} + \left[\sum_{i}\pi_i\log(\pi_i) - \sum_{j}\pi_j\log(\pi_j)\right] \]
	\[= \sum_{i,j}\log(p_{ij})\pi_i p_{ij}-\sum_{i,j}\log(p_{ji})\pi_i p_{ij} + \left[ \sum_{i,j}\pi_i p_{ij}\log(\pi_i) - \sum_{i,j}\pi_i p_{ij}\log(\pi_j) \right]\]
	\[=\sum_{i,j}\log(\frac{\pi_i p_{ij}}{\pi_jp_{ji}})\pi_ip_{ij}.\]

	 In this case  an involution kernel for $A$ is the function $W: \{1,2\}^\mathbb{N} \to \mathbb{R}$ given by
$$W ( ... ,y_2, y_1 | x_1,x_2,..) =
\log p_{y_1\,x_1} - \log \pi_{ x_1}$$
and the corresponding $A^{-}$ is given by the $A^{-} (i ,j ,y_3,y_4..)=p_{j \,i}.$
	
\end{example}

The case with just two symbols is quite special as we will see now.

\medskip

\begin{example}  Entropy production  zero -     Suppose $\Omega=\{1,2\}^{\mathbb{N}}$ and  assume that $\mu$ is a Markov measure (as defined above). Then, $e_p(\mu) = 0$.
	Indeed, as $\mu$ is invariant we get $\mu(|1,2)) = \mu(|2,1))$, and therefore, $\mu^-(|i,j))=\mu(|j,i)) = \mu(|i,j)$, for any $i,j\in\{1,2\}$. It follows that $J^- = J^+$, and therefore, $\mu^-=\mu$. Consequently,
\[e_p(\mu) = \int \log(J) - \log(J^-)\,d\mu = 0. \]

That is, in this case,  the entropy production is zero.
\end{example}

It follows from Corollary 2.3 in \cite{W} that this result -  entropy production  zero - also happen for equilibrium probabilities of a more general class of functions defined on $\Omega=\{1,2\}^{\mathbb{N}}$ (see \cite{W}).

The probability described in \cite{LMMM}  also has entropy production zero (see section 2 in \cite{LMMM}).

Markov measures on
$\Omega=\{1,2,3\}^{\mathbb{N}}$ may have non zero entropy production.

\section{Appendix: Examples in information theory}\label{appendix1}

Our intention in this section is to illustrate the theoretical results which we previously described concerning  Shannon entropy $S(P)$ and information gain $IG(\pi,P)$ via worked examples (a nice general reference on the topic  is \cite{CT}). We believe that this short presentation  will be helpful for mathematicians that do not have much familiarity with these concepts.

We start by considering the Shannon entropy  which is sometimes alternatively called   mean information.

The number $S(P)$ can be interpreted (taking basis 2 for the logarithm) as a lower bound for the average of questions of type ``yes or no'' which are necessary in order to analyze the statistics of  a symbol picked at random - according to the probability distribution $P=(p_1,...,p_d)$ - on the finite alphabet $\{1,...,d\}$. From the sequence of answers to  successive questions - of a certain type - one can introduce a binary code on the set $\{1,...,d\}$, where $0$ corresponds to ``yes'' and $1$ to ``no'' (see \cite{CT} chap. 5).

\begin{example}  Suppose that a box has balls of 4 possible different colors. Two people will play a game with the following rules: one ball is picked off the box by one of them and the other person must discover the color of this ball by making questions of the type ``yes or not''.

If this game is repeated several times, the balls are picked randomly according with the probability $P=(p_1,p_2,p_3,p_4) = (\frac{1}{4},\frac{1}{4},\frac{1}{4},\frac{1}{4})$ and the strategy used for the questions is optimal, what is the mean value of the number of questions which are necessary?
	
	\bigskip
	
We will replace the colors with symbols of the set $\{ 1,2,3, 4\}$. One can consider the following strategy of questions:\newline
	Q1: is the picked symbol 1 or 2? \newline
	- with the answer ``yes'' it can be considered the question Q2: is the symbol 1? \newline
	- with the answer ``no'' it can be considered the question Q2': is the symbol 3? \newline
Using this strategy it is necessary exactly two questions in order to discover the  symbol (color) which was taken. It  coincides with the Shannon entropy (the mean information)
	\[ S(P)=-\sum_{i=1}^{4}\frac{1}{4}\log_2(\frac{1}{4})=2. \]
Observe that the set of symbols $\{1,2,3,4\}$ can be encoded as the answers $(yy,yn,ny,nn)$. Replacing $y$ by $0$ and $n$ by $1$ we can encode $\{1,2,3,4\}$ as $(00,01,10,11)$ in binary expansion, which is optimal.
\end{example}

\begin{example} Proceeding as in above example, but now assuming  that the colors of the balls are picked randomly according to the probability $P=(p_1,p_2,p_3,p_4) = (\frac{1}{2},\frac{1}{4},\frac{1}{8},\frac{1}{8})$, one can use the following strategy of questions: \newline
	Q1: is the symbol (color) 1? (with probability (frequency) $\frac{1}{2}$ this unique question solves the problem)  \newline
	- with the answer ``yes'' we finish.\newline
	- with the answer ``no'' we consider the question Q2: is the symbol 2? \newline
	- with the answer ``yes'' we finish.\newline
	- with the answer ``no'' again, we then consider the question Q3: is the symbol 3? \newline
If this game is repeated several times, using this strategy the mean number of questions is:
	\[(1\,\text{question})\frac{1}{2} + (2\, \text{questions})\frac{1}{4} + (3\, \text{questions})\frac{1}{4} = \frac{7}{4}.\]
It coincides  with the Shannon entropy (mean information)
	\[ S(P)=-[\frac{1}{2}\log_2(\frac{1}{2}) + \frac{1}{4}\log_2(\frac{1}{4}) + \frac{1}{8}\log_2(\frac{1}{8}) +  \frac{1}{8}\log_2(\frac{1}{8})] =\frac{7}{4}. \]
	In this case $\{1,2,3,4\}$ can be encoded as $\{0,10,110,111\}$ in binary expansion, being this one optimal.
\end{example}

\begin{example} Proceeding as above and supposing that there are only two colors of balls which are picked randomly according with the probability $P=(p_1,p_2) = (\frac{2}{3},\frac{1}{3})$ one can consider the following question: \newline
	Q1: is the color (symbol) 1? \newline
With this strategy, the mean number of questions is exactly 1 which is bigger than the Shannon entropy $S(P) \approx  0,918$.  In this case $\{1,2\}$ can be encoded as $\{0,1\}$ in binary expansion.
\end{example}

We refer to \cite{CT} chap. 5 for a more complete discussion of the topic. Our intention above was  just to illustrate - with introductory and simple examples - the fact  that  the Shannon entropy is as a lower bound for the average number of questions and how one can introduce a binary code for a set of symbols $\{1,...,d\}$.

\bigskip

From now we will discuss an example concerning the Information Gain $IG(\pi,P)$ (or mutual information). We refer to \cite{Q} (see p. 89-90) for a more detailed discussion of this topic in the context of decision trees in Machine Learning.

\begin{example}\label{exampleIG}
Consider - in a similar way as before - a box with a collection of 100 objects, being 30 of them of the color blue and 70 of them of the color red. It's also known that:  \newline
a. 10 of the blue objects are balls and 20 of them are cubes\newline
b. 45 of the red objects are balls and 25 of them are cubes.

Considering  all this set of  information we can construct probabilities $P$ and $\pi$ in the following way:
\[\begin{array}{c} 30\,\, blue \\ 70\,\, red \end{array} \to P =\begin{pmatrix} 0.3\\0.7\end{pmatrix}  \hspace{1cm}
\begin{array}{ccc}  & balls & cubes \\
blue&  10 & 20    \\
red&  45 & 25
\end{array} \to \pi=\begin{pmatrix} 0.10&0.20\\0.45&0.25\end{pmatrix}.\]

We consider that $\pi$ is defined in a Cartesian product $X\times Y$ and has $x-$marginal $P=(\frac{30}{100}, \frac{70}{100})$ (adding in the lines of $\pi$) and $y-$marginal $Q= (\frac{55}{100}, \frac{45}{100})$ (adding in the rows of $\pi$).

\bigskip

We will consider two kinds of different games.

\bigskip

\textbf{Game one:} One object is randomly picked of the box and we shall discover its color by asking questions of the  type yes or no. In this case the Shannon's entropy, or mean information, is    equal to
\[S(P)=-\left[\frac{30}{100}\log(\frac{30}{100}) + \frac{70}{100}\log(\frac{70}{100})\right].\]

\bigskip

\textbf{Game two:} In this game - in a similar way as in game one - we have the same goal. However, in the present game, after the object was picked we receive  partial  information about the result, which is: ``it is a cube'' or ``it is a ball''.

In this game, with probability (or, frequency)  $\frac{55}{100}$, the information to be received it will be that it was picked a ball. Using this information we must concentrate our attention for such class of objects and so the colors are distributed according to the probability $(\frac{10}{55}, \frac{45}{55})$. Similarly, with probability (frequency) $\frac{45}{100}$, the information received will be that a cube was picked. In this case, we consider the colors distributed according  to the probability  $(\frac{20}{45}, \frac{25}{55})$. Therefore, the mean information in this game is given by a weighted mean of two Shannon's entropies, that is,
\[H(\pi) =-\frac{55}{100}\left[\frac{10}{55}\log(\frac{10}{55}) + \frac{45}{55}\log(\frac{45}{55})\right] - \frac{45}{100}\left[\frac{20}{45}\log(\frac{20}{45}) + \frac{25}{45}\log(\frac{25}{45})\right]. \]

\bigskip

Finally, we observe that the information gain $IG(\pi,P)$ given in \eqref{IG} is the difference between the mean information in game one and the mean information in game two,
	\begin{align*} IG(\pi,P)& = S(P)-H(\pi) .
	\end{align*}
\end{example}

\medskip

\section{Appendix: Variational form of $H(\pi)$}\label{appendix2}
	
In this section, we propose to study the entropy $H(\pi)$ which appears in \eqref{IG} in a similar way as in \cite{M}.

If $(a_1,...,a_n)$ and $(b_1,...,b_n)$ are probability vectors such that $b_i >0,\,\forall\, i$, then,
\begin{equation}\label{basicineq}
\sum_{i=1}^{n} a_i\log(a_i) \geq \sum_{i=1}^{n}a_i\log(b_i) ,
\end{equation}
with equality only if $a_i=b_i,\, \forall i$. This classical result  can be found for example in \cite{PP} lemma 3.3.

We will say that $f:X\times Y \to\mathbb{R}$ is a \textbf{normalized function}, if it satisfies
\[ \sum_{x\in X} e^{f(x,y)} = 1, \, \forall y.\]
If the probability $\pi$ on $X\times Y$ satisfies $\pi_{x,y}>0,\,\forall (x,y)$, then $\log(J^{\pi})$ is a normalized function.

\begin{proposition}  Let $\pi$ be a probability on $X\times Y= \{1,...,d\}\times\{1,...,r\}$ and $f$ be a normalized function. Then,
	\[ \sum_{x=1}^{d}\sum_{y=1}^{r} {\pi_{x,y}}\log(J^{\pi}(x,y)) \geq  \sum_{x=1}^{d}\sum_{y=1}^{r} {\pi_{x,y}}f(x,y).\]
	The equality occurs only if $J^\pi_{x,y} =e^{f(x,y)}$, $\forall (x,y)$, such that, $\pi_{x,y}>0$.
\end{proposition}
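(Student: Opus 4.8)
The plan is to reduce the claimed inequality to the classical finite-dimensional inequality \eqref{basicineq} by processing each column $y$ separately. First I would fix $y \in \{1,\dots,r\}$ and set $q_y = \sum_{x} \pi_{x,y}$. If $q_y = 0$, then $\pi_{x,y}=0$ for all $x$ and the contribution of that column to both sides is $0$ (using the convention $0\log 0 = 0$), so such columns can be discarded. Assume then $q_y > 0$. By definition of the Jacobian, $J^{\pi}(x,y) = \pi_{x,y}/q_y$, so the vector $(J^{\pi}(1,y),\dots,J^{\pi}(d,y))$ is exactly the probability vector obtained by conditioning $\pi$ on the line $X\times\{y\}$; call it $(a_1,\dots,a_d)$ with $a_x = \pi_{x,y}/q_y$. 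On the other hand, since $f$ is normalized, $(b_1,\dots,b_d) := (e^{f(1,y)},\dots,e^{f(d,y)})$ is a probability vector with all entries strictly positive.

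Next I would apply \eqref{basicineq} to these two vectors, obtaining
\[
\sum_{x=1}^{d} a_x \log(a_x) \;\geq\; \sum_{x=1}^{d} a_x \log(b_x),
\]
that is, $\sum_{x} \tfrac{\pi_{x,y}}{q_y}\log\!\big(J^{\pi}(x,y)\big) \geq \sum_{x} \tfrac{\pi_{x,y}}{q_y} f(x,y)$, with equality only if $a_x = b_x$ for all $x$, i.e. $J^{\pi}(x,y) = e^{f(x,y)}$ for every $x$ (and in particular for those $x$ with $\pi_{x,y}>0$). Multiplying through by $q_y > 0$ gives
\[
\sum_{x=1}^{d} \pi_{x,y}\log\!\big(J^{\pi}(x,y)\big) \;\geq\; \sum_{x=1}^{d} \pi_{x,y} f(x,y).
\]
Finally I would sum this over all $y \in \{1,\dots,r\}$ (the discarded columns with $q_y=0$ contributing $0$ to each side), yielding the desired inequality
\[
\sum_{x=1}^{d}\sum_{y=1}^{r} \pi_{x,y}\log\!\big(J^{\pi}(x,y)\big) \;\geq\; \sum_{x=1}^{d}\sum_{y=1}^{r} \pi_{x,y} f(x,y).
\]

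For the equality case, if equality holds in the summed inequality then, since each column inequality is of the same sign, equality must hold in every column with $q_y > 0$; by the equality clause of \eqref{basicineq} applied columnwise this forces $J^{\pi}(x,y) = e^{f(x,y)}$ whenever $\pi_{x,y} > 0$. Conversely, that condition makes every columnwise inequality an equality. I do not anticipate a serious obstacle here: the only minor points to handle carefully are the columns with zero mass (disposed of by the $0\log 0 = 0$ convention) and making sure the equality statement is phrased only for $(x,y)$ with $\pi_{x,y}>0$, since $f$ off the support of $\pi$ is unconstrained by the two integrals. This mirrors the treatment in \cite{M}, chap. 3.
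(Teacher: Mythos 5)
Your proof is correct and takes essentially the same approach as the paper: both decompose column-by-column in $y$, apply inequality \eqref{basicineq} to the conditional probability vector $J^{\pi}(\cdot,y)$ against $e^{f(\cdot,y)}$, multiply by $q_y$, sum over $y$, and deduce the equality case from the columnwise equality clause. Your treatment is slightly more explicit about discarding columns with $q_y=0$, which the paper handles implicitly, but the argument is the same.
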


\begin{proof} Let $q_y = \sum_x \pi_{x,y}$.  From \eqref{basicineq}, if $q_y>0$, we have
	\[ \sum_{x} J^{\pi}(x,y) \log (J^{\pi}(x,y)) \geq \sum_{x} J^{\pi}(x,y),\log(e^{f(x,y)}),\]
	with equality only if  $J^\pi_{x,y} =e^{f(x,y)}$, $\forall x$. By definition $J^{\pi}(x,y) = \frac{\pi_{x,y}}{q_y}$, if $q_y>0$,  then we get
	\[ \sum_{x} {\pi}(x,y) \log (J^{\pi}(x,y)) \geq \sum_{x} {\pi}(x,y){f(x,y)}).\]
	If we assume that $J^\pi(x_0,y_0) \neq e^{f(x_0,y_0)}$, for some $(x_0,y_0)$, such that $\pi(x_0,y_0)>0$, then, we get
	\[ \sum_{x,y} {\pi}(x,y) \log (J^{\pi}(x,y)) > \sum_{x,y} {\pi}(x,y){f(x,y)}.\]
\end{proof}

\begin{proposition}\label{Hfinito} Let $\pi$ be a probability on $X\times Y= \{1,...,d\}\times\{1,...,r\}$. Then,
	\[	H(\pi) = - \sup \{ \sum_{x,y} f(x,y)\pi_{x,y}\,|\, \sum_{x\in X} e^{f(x,y)} = 1, \, \forall y\}.\]
\end{proposition}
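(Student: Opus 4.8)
The plan is to show the two inequalities that together give the claimed identity. Recall that by definition $H(\pi) = \sum_{y=1}^r q_y \left[-\sum_{x=1}^d \frac{\pi_{x,y}}{q_y}\log\left(\frac{\pi_{x,y}}{q_y}\right)\right]$ where $q_y = \sum_x \pi_{x,y}$, and that this was already observed in Section \ref{secbasic} to equal $-\int \log(J^\pi)\,d\pi = -\sum_{x,y} \pi_{x,y}\log(J^\pi(x,y))$, using the convention $0\log 0 = 0$ and interpreting the summands with $q_y = 0$ as zero.

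First I would handle the inequality $H(\pi) \geq -\sup\{\cdots\}$, equivalently $\sum_{x,y}\pi_{x,y}\log(J^\pi(x,y)) \geq \sum_{x,y}\pi_{x,y}f(x,y)$ for every normalized $f$. This is exactly the content of the proposition stated just before \ref{Hfinito}: for each fixed $y$ with $q_y > 0$, the vectors $\left(\frac{\pi_{x,y}}{q_y}\right)_x$ and $\left(e^{f(x,y)}\right)_x$ are probability vectors, so the elementary inequality \eqref{basicineq} gives $\sum_x \frac{\pi_{x,y}}{q_y}\log\frac{\pi_{x,y}}{q_y} \geq \sum_x \frac{\pi_{x,y}}{q_y} f(x,y)$; multiplying by $q_y$ and summing over $y$ (the terms with $q_y = 0$ contribute nothing on either side) yields the claim. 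Hence $-H(\pi) \geq \sum_{x,y}\pi_{x,y}f(x,y)$ for every normalized $f$, so $-H(\pi) \geq \sup\{\cdots\}$.

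For the reverse inequality I would exhibit a normalized function attaining the value $-H(\pi)$, namely a suitable modification of $f^* = \log(J^\pi)$. The subtlety is that $J^\pi(x,y) = \pi_{x,y}/q_y$ may be zero (when $\pi_{x,y} = 0$) or undefined (when $q_y = 0$), so $\log(J^\pi)$ need not be a genuine real-valued normalized function on all of $X\times Y$. To fix this, for each $y$ let $S_y = \{x : \pi_{x,y} > 0\}$; if $q_y > 0$ then $S_y \neq \emptyset$, and I define $f(x,y)$ on $S_y$ by $\log(\pi_{x,y}/q_y)$, and redistribute an arbitrarily small mass among the remaining $x \notin S_y$ so that $\sum_x e^{f(x,y)} = 1$ exactly — concretely, replace $\pi_{x,y}/q_y$ by $(1-\varepsilon)\pi_{x,y}/q_y$ on $S_y$ and spread $\varepsilon$ uniformly off $S_y$, then take logarithms; if $q_y = 0$ simply put $f(\cdot,y) = -\log d$. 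For $y$ with $q_y > 0$, $\sum_x \pi_{x,y} f(x,y) = \sum_{x\in S_y}\pi_{x,y}\log\frac{(1-\varepsilon)\pi_{x,y}}{q_y} = \sum_{x\in S_y}\pi_{x,y}\log\frac{\pi_{x,y}}{q_y} + \log(1-\varepsilon)\,q_y$, which converges to $\sum_x \pi_{x,y}\log(J^\pi(x,y))$ (the $q_y = 0$ rows contribute $0$) as $\varepsilon \to 0^+$; summing over $y$ gives $\sum_{x,y}\pi_{x,y}f(x,y) \to -H(\pi)$. Therefore $\sup\{\cdots\} \geq -H(\pi)$, and combined with the first part this proves $H(\pi) = -\sup\{\cdots\}$.

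The main obstacle is the bookkeeping around zeros: making sure the supremum is genuinely approached (it need not be attained, since $f = \log J^\pi$ can fail to be real-valued) and that the convention $0\log 0 = 0$ is applied consistently so that the rows with $q_y = 0$ or the entries with $\pi_{x,y} = 0$ are correctly neutralized on both sides. Once one is careful that normalized functions must take finite real values for each fixed $y$, the $\varepsilon$-perturbation argument above is routine; alternatively one may simply quote Proposition \ref{Hfinito}'s companion statement for $\geq$ and note that the supremum side is $\geq \sum_{x,y}\pi_{x,y}f(x,y)$ for the perturbed $f$, letting $\varepsilon\to 0$.
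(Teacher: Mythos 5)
Your proof is correct and follows essentially the same route as the paper: one direction via the Jensen-type inequality of the preceding proposition applied row by row, the other by an $\epsilon$-perturbation of $\log J^{\pi}$ that moves a small mass onto the zero-probability entries, then letting $\epsilon\to 0$. Two cosmetic slips: the first inequality should read $H(\pi)\leq -\sup\{\cdots\}$ rather than $\geq$ (your subsequent deduction does in fact give the $\leq$ direction), and your ``concretely'' rephrasing applies the $(1-\varepsilon)$ scaling on $S_y$ even when $S_y = X$, where it violates the normalization constraint --- so, as in the paper's explicit case split and in your own first phrasing, perturb only those rows with $S_y\subsetneq X$.
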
	

\begin{proof}
	If $\pi_{x,y} >0,\, \forall (x,y)$, then $J^{\pi}$ is well defined, normalized and positive in $X\times Y$. From the last proposition, we get that the function $\log(J^{\pi})$ attains the supremum. In this case, the proof is finished.
	If 	$\pi(x_0,y_0)=0$, for some point $(x_0,y_0)$, then the function $\log(J^{\pi})$ is only  well defined for $\pi$ a.e. $(x,y)$. In this case we get, from the last proposition,
	\[	H(\pi) \leq - \sup \{ \sum_{x,y} f(x,y)\pi_{x,y}\,|\, \sum_{x\in X} e^{f(x,y)} = 1, \, \forall y\}.\]
	In order to prove the opposite inequality we consider for each $\epsilon>0$ the function $f^{\epsilon}$ defined in the following way: for fixed $y_0$, if $\pi(x,y_0)>0$, for any $x$, then $f^{\epsilon}(x,y_0) = \log(J^{\pi}(x,y_0)),\, \forall x$.
	For fixed $y_0$, if $\pi(x_0,y_0) = 0$, for some $x_0$,  we define
	\[ f^{\epsilon}(x,y_0) =
	\left\{\begin{array}{cc}
	\log((1-\epsilon)J^{\pi}(x,y_0)) & \text{if} \, \pi(x,y_0)>0 \\
	a(\epsilon,y_0) & \text{if}\, \pi (x,y_0) = 0\end{array}\right.,\]
	where $a(\epsilon,y_0)$ is chosen in such way that $\sum_{x}e^{f(x,y_0)} = 1.$
	
	With this construction we get that $f(x,y)$ is well defined for any $(x,y)\in X\times Y$ and $\sum_{x}e^{f(x,y)} = 1, \,\, \forall y$. Furthermore,
	\begin{align*}
	\sum_{x,y} f^{\epsilon}(x,y)\pi_{x,y} &\geq \sum_{x,y} \log((1-\epsilon)J^{\pi}(x,y_0)) \pi_{x,y}\\
	& =  \log(1-\epsilon) + \sum_{x,y} \log(J^{\pi}(x,y_0)) \pi_{x,y}.
	\end{align*}
	Then,
	\begin{align*}
	H(\pi) &= -  \sum_{x=1}^{d}\sum_{y=1}^{r} \log(J^{\pi}(x,y)){\pi_{x,y}} \\
	&\geq - \sup \{ \sum_{x,y} f(x,y)\pi_{x,y}\,|\, \sum_{x\in X} e^{f(x,y)} = 1, \, \forall y\} -\log(1-\epsilon).
	\end{align*}
	Taking $\epsilon \to 0$, we finish the proof.
\end{proof}

\medskip

A. O. Lopes partially is supported by CNPq grant.
\smallskip

\end{document}